\documentclass[a4paper,reqno]{amsart}%
\usepackage{pgf,tikz}
\usepackage{amssymb}
\usepackage{amsmath}
\usepackage{amsfonts}
\usepackage{graphicx}
\usepackage{bm}%
\setcounter{MaxMatrixCols}{30}
\providecommand{\U}[1]{\protect\rule{.1in}{.1in}}
\usetikzlibrary{backgrounds}
\usetikzlibrary{arrows}
\let\oldmathbf\mathbf
\renewcommand{\mathbf}[1]{\boldsymbol{\oldmathbf{#1}}}
\newtheorem{theorem}{Theorem}

\newtheorem{corollary}[theorem]{Corollary}

\newtheorem{definition}[theorem]{Definition}

\newtheorem{lemma}[theorem]{Lemma}

\newtheorem{proposition}[theorem]{Proposition}
\newtheorem{remark}[theorem]{Remark}

\allowdisplaybreaks
\begin{document}
\title{Fourier analytic techniques for lattice point discrepancy}
\author[L. Brandolini]{Luca Brandolini}
\address{Dipartimento di Ingegneria Gestionale, dell'Informazione e della Produzione,
Universit\`a degli Studi di Bergamo, Viale Marconi 5, Dalmine BG, Italy}
\email{luca.brandolini@unibg.it}
\author[G. Travaglini]{Giancarlo Travaglini}
\address{Dipartimento 
di Matematica e Applicazioni, Universit\`a di Milano-Bicocca, Via
Cozzi 55, Milano, Italy}
\email{giancarlo.travaglini@unimib.it}
\subjclass{11H06, 11K38, 42B05}
\keywords{Convex bodies, Flat points, Decay of Fourier transforms, Discrepancy, Integer points, Irregularities of distribution}

\date{}
\begin{abstract}
Counting integer points in large convex bodies with smooth boundaries containing
isolated flat points is oftentimes an intermediate case between balls (or convex bodies with smooth boundaries having everywhere positive
curvature) and cubes (or convex polytopes). In this paper we
provide a detailed description of several discrepancy problems in the
particular planar case where the boundary coincides locally with the graph of
the function $\mathbb{R\ni}t\mapsto\left\vert t\right\vert ^{\gamma}$, with
$\gamma>2$. We consider both \textit{integer points} problems and
\textit{irregularities of distribution} problems. The above \textquotedblleft
restriction\textquotedblright\ to a particular family of convex bodies is
compensated by the fact that many proofs are elementary. The paper
is entirely self-contained.
\end{abstract}
\maketitle

\section{Introduction}

The word \textit{discrepancy} comes from its Latin counterpart \textit{discrepantia
}(disagreement, contrast) and here expresses the deviation of a
\textit{discrete volume} of a convex body from its \textit{(continuous)
volume}. Much of this paper is devoted to the study of lattice points
discrepancy in dimension two: for a given convex body $C\subset\mathbb{R}^{2}$ (that is a compact convex set with non-empty interior) and a large real positive parameter
$R$ we compare the number of points with integer coordinates contained in the
dilated body
\[
RC=\left\{  t\in\mathbb{R}^{2}:t/R\in C\right\}
\]
and its area. More precisely we consider the discrepancy%
\[
\mathcal{D}\left(  RC\right)  :=-R^{2}\left\vert C\right\vert +\mathrm{card}%
\left(  RC\cap\mathbb{Z}^{2}\right)  =-R^{2}\left\vert C\right\vert
+\sum_{n\in\mathbb{Z}^{2}}\chi_{RC}\left(  n\right)
\]
where $\chi_{A}$ denotes the characteristic (indicator) function of the set
$A$.

The problem of estimating $\mathcal{D}\left(  RC\right)  $ for large values of
$R$ has a long history and several connections to different branches of
mathematics (see e.g. \cite{BC,BGT,Cha,DT,Hux,Kra,Mat,Tra}).

Here we are interested in the following specific family of convex bodies.

\begin{definition}
\label{definition}Let $\mathbb{R\ni\gamma}>2$. We denote by $C_{\gamma}$ any
planar compact convex set, contained in the square $\left(  -1/2,1/2\right)
^{2}$, whose boundary $\partial C_{\gamma}$ coincides, in a small neighbourhood
$U$ of the origin, with the graph of the function $\mathbb{R\ni}%
x\mapsto\left\vert x\right\vert ^{\gamma}$. We also assume that, outside
$\frac{1}{2}U$, $\partial C_{\gamma}$ is smooth with curvature $\geqslant c>0$.
\end{definition}
\begin{center}%
\[
\includegraphics[width=5cm]{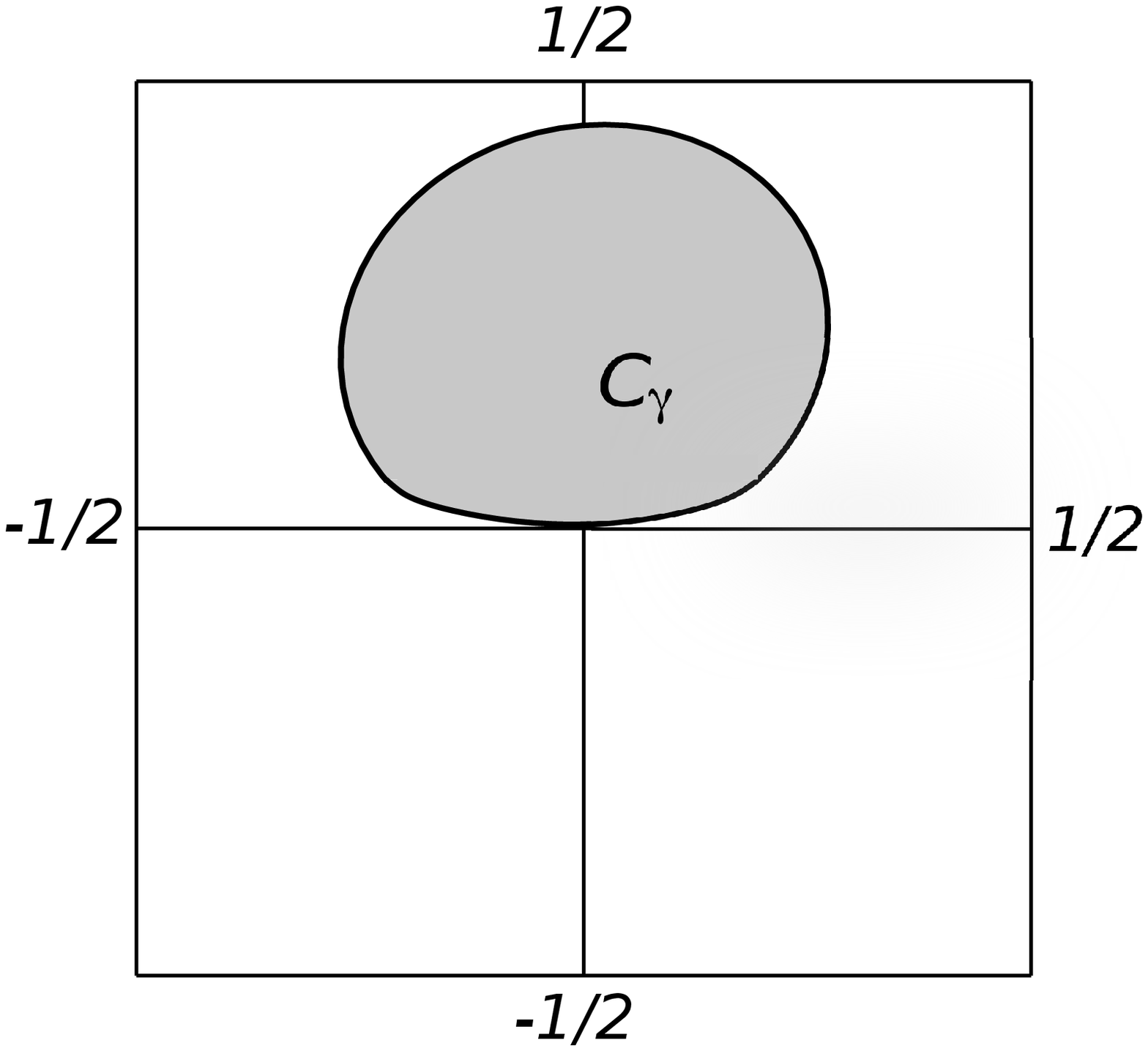}
\]
\end{center}
Our interest in the above class of convex bodies comes from the fact that a
large part of \textit{Geometric discrepancy} has been developed for rectangles
(or parallelepipeds, or polytopes) and discs (or balls, or convex bodies
having smooth boundary with everywhere positive Gaussian curvature). 
See the
above list of references and also \cite{Bec,Dav,Mon,Rot,Sch}.
The above index $\gamma$
provides a sort of \textquotedblleft bridge\textquotedblright\ between, say, a
disc and a square, which respectively can be roughly seen as the cases
$\gamma=2$ and $\gamma=\infty$. Anyway in the last section we shall see a
situation where $C_{\gamma}$ does not have this \textit{intermediate position},
and a sort of dichotomy appears.

\bigskip
The proofs in this paper are essentially Fourier analytic and several
arguments come from \cite{BCGGT}, \cite{BGT}, \cite{BRT} and \cite{Gar}. All
the results in this paper are essentially known, except Theorem
\ref{irredistr}.

\bigskip

We set the notation. 

We identify the torus $\mathbb{T}^{2}=\mathbb{R}%
^{2}/\mathbb{Z}^{2}$ with the unit square $\left[  -1/2,1/2\right)  ^{2}$. Let
$f\in L^{1}\left(  \mathbb{T}^{2}\right)  $ and for every $k\in\mathbb{Z}^{2}$
let%
\[
\widehat{f}\left(  k\right)  =\int_{\mathbb{T}^{2}}f\left(  t\right)  e^{-2\pi
it\cdot k}\ dt
\]
be the Fourier coefficient of $f\left(  t\right)  $, which therefore
has Fourier series%
\[
\sum_{k\in\mathbb{Z}^{2}}\widehat{f}\left(  k\right)  e^{2\pi it\cdot k}\ .
\]
The points in $\mathbb{Z}^{2}$ are termed \textit{integer points. }If $g\in
L^{1}\left(  \mathbb{R}^{2}\right)  $ and $\xi\in\mathbb{R}^{2}$ then
\[
\widehat{g}\left(  \xi\right)  =\int_{\mathbb{R}^{2}}g\left(  t\right)
e^{-2\pi it\cdot\xi}\ dt
\]
denotes the Fourier transform of $g\left(  t\right)  $.

The connection between the above discrepancy and Fourier analysis is a
consequence of the following simple observation. Let $C$ be a convex body in
$\mathbb{R}^{2}$ and, for every $t\in\mathbb{R}^{2}$, define the
\textit{discrepancy function}%
\begin{align*}
\mathcal{D}_{R}\left(  t\right)   &  =\mathcal{D}\left(  RC+t\right)
=-R^{2}\left\vert C\right\vert +\mathrm{card}\left(  (RC+t)\cap\mathbb{Z}%
^{2}\right)  \\
&  =-R^{2}\left\vert C\right\vert +\sum_{n\in\mathbb{Z}^{2}}\chi_{RC}\left(
n-t\right)  \ .
\end{align*}
The function $\mathcal{D}_{R}\left(  t\right)  $ is periodic  with Fourier series%
\begin{equation}
\sum_{0\neq m\in\mathbb{Z}^{2}}\widehat{\mathcal{D}}_{R}\left(  m\right)
e^{2\pi im\cdot t}=\sum_{0\neq m\in\mathbb{Z}^{2}}\widehat{\chi}_{RC}\left(
m\right)  e^{2\pi im\cdot t}\ .\label{fourierdiscr}%
\end{equation}
Indeed,%
\begin{align*}
\widehat{\mathcal{D}}_{R}\left(  0\right)   &  =\int_{\mathbb{T}^{2}}\left(
-R^{2}\left\vert C\right\vert +\sum_{n\in\mathbb{Z}^{2}}\chi_{RC}\left(
n-t\right)  \right)  \ dt\\
&  =-R^{2}\left\vert C\right\vert +\sum_{n\in\mathbb{Z}^{2}}\int
_{\mathbb{T}^{2}}\chi_{RC}\left(  n-t\right)  \ dt
=-R^{2}\left\vert
C\right\vert +\int_{\mathbb{R}^{2}}\chi_{RC}\left(  t\right)  \ dt=0\ ,
\end{align*}
and for $m\neq0$,
\begin{align*}
\widehat{\mathcal{D}}_{R}\left(  m\right)   &  =\int_{\mathbb{T}^{2}}\left(
-R^{d}\left\vert C\right\vert +\sum_{n\in\mathbb{Z}^{2}}\chi_{RC}\left(
n-t\right)  \right)  e^{-2\pi im\cdot t}\ dt\\
&  =\int_{RC}e^{-2\pi iRm\cdot t}\ dt=\widehat{\chi}_{RC}\left(  m\right)  \ .
\end{align*}
Observe that the two sides of the equality $\widehat{\mathcal{D}}_{R}\left(
m\right)  =\widehat{\chi}_{RC}\left(  m\right)  $ have a different nature. On
the LHS the terms $\widehat{\mathcal{D}}_{R}\left(  m\right)  $ are the
Fourier coefficients of the periodic function $\mathcal{D}_{R}\left(  t\right)
$ (defined on $\mathbb{T}^{2}$), while on the RHS the terms $\widehat{\chi
}_{RC}\left(  m\right)  $ are the restriction (to $\mathbb{Z}^{2}$) of the
Fourier transform $\widehat{\chi}_{RC}\left(  \xi\right)  $ of the function
$\chi_{RC}\left(  t\right)  $ (which is defined on $\mathbb{R}^{2}$).

Throughout the paper $c$, $c_{1}$, $c_{2}$, $\ldots$ denote constants which
may change from step to step.

\section{Integer points in large convex bodies}

First we recall the \textit{circle problem} and the \textit{Hardy-Voronoi
identity}. Let $R$ be a positive real number. The circle problem asks for a significant estimate of the sum
\[
A\left(  R\right)  =\sum_{0\leq k\leq R^2}r\left(  k\right)
\]
of the arithmetic function%
\[
r\left(  k\right)  =\mathrm{card}\left\{  \left(  m_{1},m_{2}\right)
\in\mathbb{Z}^{2}:m_{1}^{2}+m_{2}^{2}=k\right\}  \ ,
\]
that is the number of ways of writing a non-negative  integer as a sum of two squares.
Let
$B=B\left(  0,1\right)  =\left\{  t\in\mathbb{R}^{2}:\left\vert t\right\vert
\leqslant1\right\}  $ be the disc of unit radius centred at the origin. More generally
we write $B\left(  \tau,r\right)  :=\left\{  t\in\mathbb{R}^{2}:\left\vert
t-\tau\right\vert \leqslant r\right\}  $. 

More than two hundreds years ago
C.F. Gauss observed that the average of $r\left(  k\right)  $ reduces to
counting the integer points in the dilated disc 
$RB=\left\{  t\in \mathbb{R}^{2}:\left\vert t/R\right\vert \leqslant1\right\}  $, for $R>1$.
Then it is easy to observe that $\mathrm{card}\left(  RB\cap\mathbb{Z}%
^{2}\right)  $ equals the area $R^{2}\pi$ of the disc plus an error term
smaller, in absolute value, than ($\sqrt{2}$ times) the length of the boundary
of the dilated disc. That is
\[
\mathrm{card}\left(  RB\cap\mathbb{Z}^{2}\right)  =R^{2}\pi+\mathcal{D}\left(
RB\right)  \ ,
\]
with $\mathcal{D}\left(  RB\right)  =\mathcal{O}\left(  R\right)  $.
The error bound $\mathcal{O}\left(  R\right)  $ has been improved several
times during the last century. In 1906 W. Sierpi\'{n}ski proved
that$\ \ \left\vert \mathcal{D}\left(  RB\right)  \right\vert \leqslant
cR^{2/3}$. The best result so far ($\leqslant cR^{0.627\cdots}$) has been recently obtained by J. Bourgain and N. Watt \cite{BW}.

In 1916 G. Hardy proved that the exponent $1/2$ is not large enough and
conjectured that $\ \left\vert \mathcal{D}\left(  RB\right)  \right\vert
\leqslant cR^{1/2+\varepsilon}\ $.

Earlier in 1915 G. Hardy proved the following result (previously conjectured
by G. Voronoi):
\begin{equation}
R\sum_{k=1}^{+\infty}\frac{r\left(  k\right)  }{\sqrt{k}}J_{1}\left(
2\pi\sqrt{k}R\right)  =\frac{A\left(  R^{+}\right)  +A\left(  R^{-}\right)
}{2}-\pi R^{2} \label{hardVor}\;,
\end{equation}
where $A\left(  R^{+}\right)$ and $A\left(  R^{-}\right)$ denote the right and left limits at $R$ respectively of the discontinuous function $A(x)$, and
\[
J_{1}\left(  x\right)  =\frac{x}{2}\int_{-1}^{1}\left(  1-t^{2}%
\right)  ^{1/2}e^{itx}\ dt
\] 
is a Bessel function, thereby giving an
analytic expression for the discrepancy. See \cite{BCol}, \cite{IK}.

The series in (\ref{hardVor}) is the spherical Fourier series (see
(\ref{fourierdiscr}))
\[
\sum_{0\neq m\in\mathbb{Z}^{2}}\widehat{\chi}_{RB}\left(  m\right)  e^{2\pi
im\cdot t}=\lim_{K\rightarrow+\infty}\sum_{0<\left\vert m\right\vert
\leqslant K}\widehat{\chi}_{RB}\left(  m\right)  e^{2\pi im\cdot t}%
\]
of the discrepancy function $\mathbb{T}^{2}\ni t\longmapsto\mathcal{D}\left(
RB+t\right)  $, evaluated at the origin. Indeed, for every $0\neq\xi
\in\mathbb{R}^{2}$, we have%
\[
\widehat{\chi}_{B}\left(  \xi\right)  =\left\vert \xi\right\vert ^{-1}%
J_{1}\left(  2\pi\left\vert \xi\right\vert \right)
\]
(see e.g. \cite[p.216]{Tra}) and therefore, after summing on the integers
points $m$ on all circles of radius $\sqrt{k}$, we obtain, at $t=0$,%
\begin{align}
\sum_{m\neq0}\widehat{\chi}_{RB}\left(  m\right)   &  =R^{2}\sum_{m\neq
0}\widehat{\chi}_{B}\left(  Rm\right)  =R\sum_{m\neq0}\left\vert m\right\vert
^{-1}J_{1}\left(  2\pi R\left\vert m\right\vert \right)  \label{Kendall}\\
&  =R\sum_{j=1}^{+\infty}\frac{r\left(  k\right)  }{\sqrt{k}}J_{1}\left(  2\pi
R\sqrt{k}\right)  \ .\nonumber
\end{align}
The above series is not absolutely convergent and, in spite of its explicit
expression, does not seem to help us in funding a sharp bound for the
discrepancy, unless we apply a smoothing argument of E. Hlawka which turns the
above series into an absolutely convergent one, and provides a new proof of
Sierpi\'{n}ski's estimate (see e.g. \cite[p. 162]{Tra} or the proof of Theorem
\ref{minore dpiu1} below).

More generally, when $C$ is a convex planar body, the discrepancy function%
\[
\mathcal{D}_{R}\left(  t\right)  =-R^{2}\left\vert C\right\vert +\mathrm{card}%
\left(  (RC+t)\cap\mathbb{Z}^{2}\right)
\]
is a periodic piecewise constant function (observe that $\mathcal{D}%
_{R}\left(  t\right)  $ may change value only when, moving $t$, we hit or we leave integer points). The above Hardy-Voronoi identity falls within the framework
of pointwise convergence of Fourier series of piecewise smooth functions. A
simple nice result in this field says that if the graph of $f(t)$ has the shape
in the following  figure, about a point $t_{0}$, then the spherical means of the above
Fourier series converge, at the point $t_{0}$, to the number $b\beta/2\pi$%

\[
\includegraphics[width=7cm]{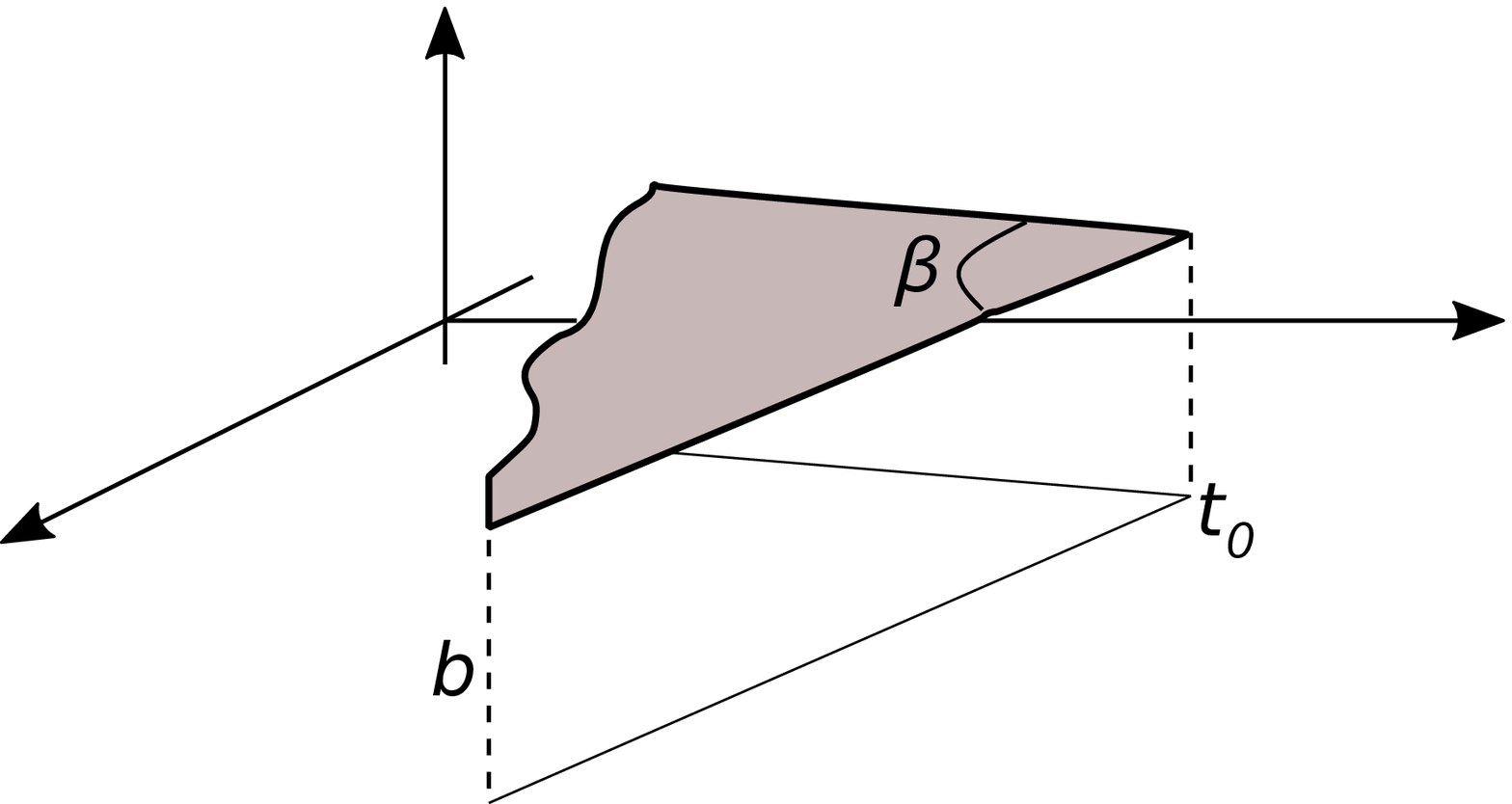}
\]
(see e.g. \cite{BCol}).

The situation may improve if we introduce an $L^{2}$ average (over
translations) of the discrepancy function $\mathcal{D}_{R}\left(  t\right)  $.

\subsection{Kendall's argument}

D. Kendall \cite{Ken} was the first one to write explicitly the Fourier
series of the discrepancy function (and therefore to point out the identity
(\ref{Kendall})). Then he used the Parseval identity to prove that for, say, the
unit disc $B$ we have%
\[
\left\{  \int_{\mathbb{T}^{2}}\left\vert \mathcal{D}\left(  RB+t\right)
\right\vert ^{2}\ dt\right\}  ^{1/2}\leqslant cR^{1/2}\ .
\]
Indeed it is known (by the asymptotics of Bessel functions or by Theorem
\ref{chord} below) that
\[
\left\vert \widehat{\chi}_{B}\left(  \xi\right)  \right\vert \leqslant
c\left(  1+\left\vert \xi\right\vert \right)  ^{-3/2}\;.
\]
Therefore%
\begin{equation}
\int_{\mathbb{T}^{2}}\left\vert \mathcal{D}\left(  RB+t\right)  \right\vert
^{2}\ dt=R^{4}\sum_{m\neq0}\left\vert \widehat{\chi}_{B}\left(  Rm\right)
\right\vert ^{2}\leqslant cR\sum_{m\neq0}\left\vert m\right\vert
^{-3}=cR\ .\label{kendall}%
\end{equation}

Kendall's result for the disc can be extended to the case of an arbitrary planar
convex body $C$ as long as we introduce an average over rotations. A.
Podkorytov (see \cite{Pod91}, see also \cite[p.176]{Tra},\cite{BHI}) proved
that for every planar convex body $C$ we have%
\[
\int_{0}^{2\pi}\left\vert \widehat{\chi}_{C}\left(  \rho\Theta\right)
\right\vert ^{2}d\theta\leqslant c\ \rho^{-3}\ ,
\]
where $\Theta=\left(\cos\theta,\sin\theta\right)$ and $\rho\geqslant 2$.
This and Kendall's argument yield%
\begin{equation}
\left\{  \int_{SO\left(  2\right)  }\int_{\mathbb{T}^{2}}\left\vert
\mathcal{D}\left(  \sigma\left(  RC\right)  +t\right)  \right\vert
^{2}\ dtd\sigma\right\}  ^{1/2}\leqslant cR^{1/2}\label{kendallPodkorytov}%
\end{equation}
for every planar convex body $C$. Note that, within the family of convex
planar bodies having piecewise smooth boundary, the upper bound
(\ref{kendallPodkorytov}) can be inverted (see \cite{TT}, \cite{BRT}) if and only if $C$
is not a polygon that is symmetric and can be inscribed in a circle.

\bigskip

Kendall's $L^{2}$ result for the disc can be extended to $L^{p}$ spaces
provided $p<4$ (see \cite{Hux1}, \cite{BCGT}).

\begin{theorem}
\label{twoandp} Let $B$ be the unit disc. Then%
\begin{equation}
\left\{  \int_{\mathbb{T}^{2}}\left\vert \mathcal{D}\left(  RB+t\right)
\right\vert ^{p}\ dt\right\}  ^{1/p}\leqslant c\left\{
\begin{array}
[c]{lll}%
R^{1/2} &  & \text{if }1\leqslant p<4,\\
R^{1/2}\log^{1/4}\left(  R\right)   &  & \text{if }p=4,\\
R^{2/3\left(  1-1/p\right)  } &  & \text{if }p>4.
\end{array}
\right.  \label{cambridge proc}%
\end{equation}

\end{theorem}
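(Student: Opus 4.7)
The plan is to bracket the range of $p$ by Kendall's $L^{2}$ estimate \eqref{kendall} and Sierpi\'{n}ski's pointwise bound $\|\mathcal{D}_{R}\|_{\infty}\leqslant cR^{2/3}$, the bridge being a single weak-type $L^{4}$ inequality. The easy end is $1\leqslant p\leqslant 2$: H\"older's inequality on the unit-measure torus gives $\|\mathcal{D}_{R}\|_{p}\leqslant \|\mathcal{D}_{R}\|_{2}\leqslant cR^{1/2}$.

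The crucial step is the weak-type estimate
\[
\bigl|\bigl\{t\in\mathbb{T}^{2}:|\mathcal{D}_{R}(t)|>\lambda\bigr\}\bigr|\leqslant cR^{2}\lambda^{-4},\qquad \lambda>0.
\]
I would prove this starting from \eqref{Kendall} together with the Bessel asymptotic $J_{1}(x)=\sqrt{2/(\pi x)}\cos(x-3\pi/4)+O(x^{-3/2})$, so that the main contribution to $\widehat{\chi}_{RB}(m)$ takes the oscillatory form $cR^{1/2}|m|^{-3/2}\cos(2\pi R|m|-3\pi/4)$ plus a remainder of size $R^{-1/2}|m|^{-5/2}$, which sums to an $L^{\infty}$ function of size $O(R^{-1/2})$ and is thus harmless. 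After a smooth dyadic truncation to a trigonometric polynomial $P_{R}$, computing $\|P_{R}\|_{4}^{4}=\|P_{R}^{2}\|_{2}^{2}$ by Parseval yields
\[
\sum_{m}\Bigl|\sum_{k+\ell=m}\widehat{\chi}_{RB}(k)\,\widehat{\chi}_{RB}(\ell)\Bigr|^{2},
\]
and the diagonal terms $\ell=-k$ are controlled by the trivial $\sum|k|^{-3}$. The off-diagonal part, which is the main obstacle, is handled by expanding the product $\cos\cdot\cos$ into $\cos\bigl(2\pi R(|k|\pm|\ell|)\bigr)$ and invoking a lattice-point count for the near-degenerate configurations in which $\bigl||k|+|\ell|-|k+\ell|\bigr|$ is small; the oscillation combined with this geometric counting is what produces the bound $cR^{2}$ with no logarithmic loss, and is where genuine analytic number theory enters.

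Given the weak $L^{4}$ inequality, the three regimes drop out by integrating the distribution function. For $1\leqslant p<4$, cutting at $\lambda=R^{1/2}$ gives $\|\mathcal{D}_{R}\|_{p}^{p}\leqslant cR^{p/2}+cR^{2}\int_{R^{1/2}}^{\infty}\lambda^{p-5}\,d\lambda\leqslant cR^{p/2}$. For $p=4$, the divergent $\int d\lambda/\lambda$ must be truncated at $\lambda\asymp R^{2/3}$ by Sierpi\'{n}ski, producing $cR^{2}\log R$ and hence $\|\mathcal{D}_{R}\|_{4}\leqslant cR^{1/2}\log^{1/4}R$. For $p>4$, the same upper cutoff yields $\int_{R^{1/2}}^{R^{2/3}}\lambda^{p-5}\,d\lambda\asymp R^{2(p-4)/3}$, so that $\|\mathcal{D}_{R}\|_{p}^{p}\leqslant cR^{(2p-2)/3}$ and therefore $\|\mathcal{D}_{R}\|_{p}\leqslant cR^{(2/3)(1-1/p)}$, completing the proof.
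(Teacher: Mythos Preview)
The paper does not actually prove this theorem; it states the result, cites \cite{Hux1} and \cite{BCGT}, and offers only the remark that for $p<4$ ``Kendall's series converges more than enough'' while ``the case $p\geqslant 4$ uses Hlawka's smoothing argument''. Your overall architecture---establish the weak-type bound $|\{|\mathcal{D}_R|>\lambda\}|\leqslant cR^{2}\lambda^{-4}$, pair it with Sierpi\'nski's $\|\mathcal{D}_R\|_\infty\leqslant cR^{2/3}$, and integrate the distribution function---is correct and is essentially the approach of \cite{BCGT}. Your layer-cake computations for the three ranges of $p$ are right. Note incidentally that for $2\leqslant p<4$ the paper's hint is simpler than yours: Hausdorff--Young applied to the coefficients $\widehat{\chi}_{RB}(m)=O(R^{1/2}|m|^{-3/2})$ gives $\|\mathcal{D}_R\|_p\leqslant cR^{1/2}(\sum|m|^{-3q/2})^{1/q}$ with $q=p'>4/3$, and the series converges; no weak-$L^4$ is needed there. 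Conversely, at $p=4$ the na\"{\i}ve Hausdorff--Young with Hlawka's cutoff yields only $R^{1/2}(\log R)^{3/4}$, so something like your route is genuinely required for the stated $\log^{1/4}R$.

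The gap is in your sketch of the weak-$L^4$ inequality itself. You propose to compute $\|P_R\|_4^4=\|P_R^{2}\|_2^{2}$ via Parseval and to control the off-diagonal convolution sum by oscillation and lattice-point counting, arriving at ``the bound $cR^{2}$ with no logarithmic loss''. But if $P_R$ approximates $\mathcal{D}_R$, this would give the \emph{strong} estimate $\|\mathcal{D}_R\|_4\leqslant cR^{1/2}$, which is strictly better than what the theorem asserts and is not known; removing the logarithm from the strong fourth moment would be a genuine advance on the circle problem. Huxley's direct fourth-moment calculation, which your description most resembles, does pick up the $\log R$. The passage to a log-free \emph{weak} $L^4$ bound is not automatic: in \cite{BCGT} it comes from a dyadic frequency decomposition into $O(\log R)$ blocks, each of strong $L^4$ size $cR^{1/2}$, together with separate $L^\infty$ control on the individual blocks so that they cannot all be large simultaneously. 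You should either supply that mechanism explicitly or, more honestly, invoke the weak-$L^4$ bound as the black box from \cite{BCGT} and then run your (correct) distribution-function deduction.
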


The idea for the proof of (\ref{cambridge proc}) is that in Kendall's argument
the series $\ \sum_{m\neq0}\left\vert m\right\vert ^{-3}\ $ converges
\textquotedblleft more than enough\textquotedblright\ and we have room for a
few positive results when $p>2$. Actually the upper bounds in Theorem
\ref{twoandp} are known to be sharp in the range $1\leqslant p<4$. The case
$p\geqslant4$ uses Hlawka's smoothing argument and it does not seem to be sharp.

\subsection{Integer points in large polygons}

The study of integer points in polyhedra is another topic with several
applications in different parts of mathematics (see e.g. \cite{Bar},\cite{BR},\cite{Schr}).
\begin{center}%
\[
\includegraphics[scale=0.32]{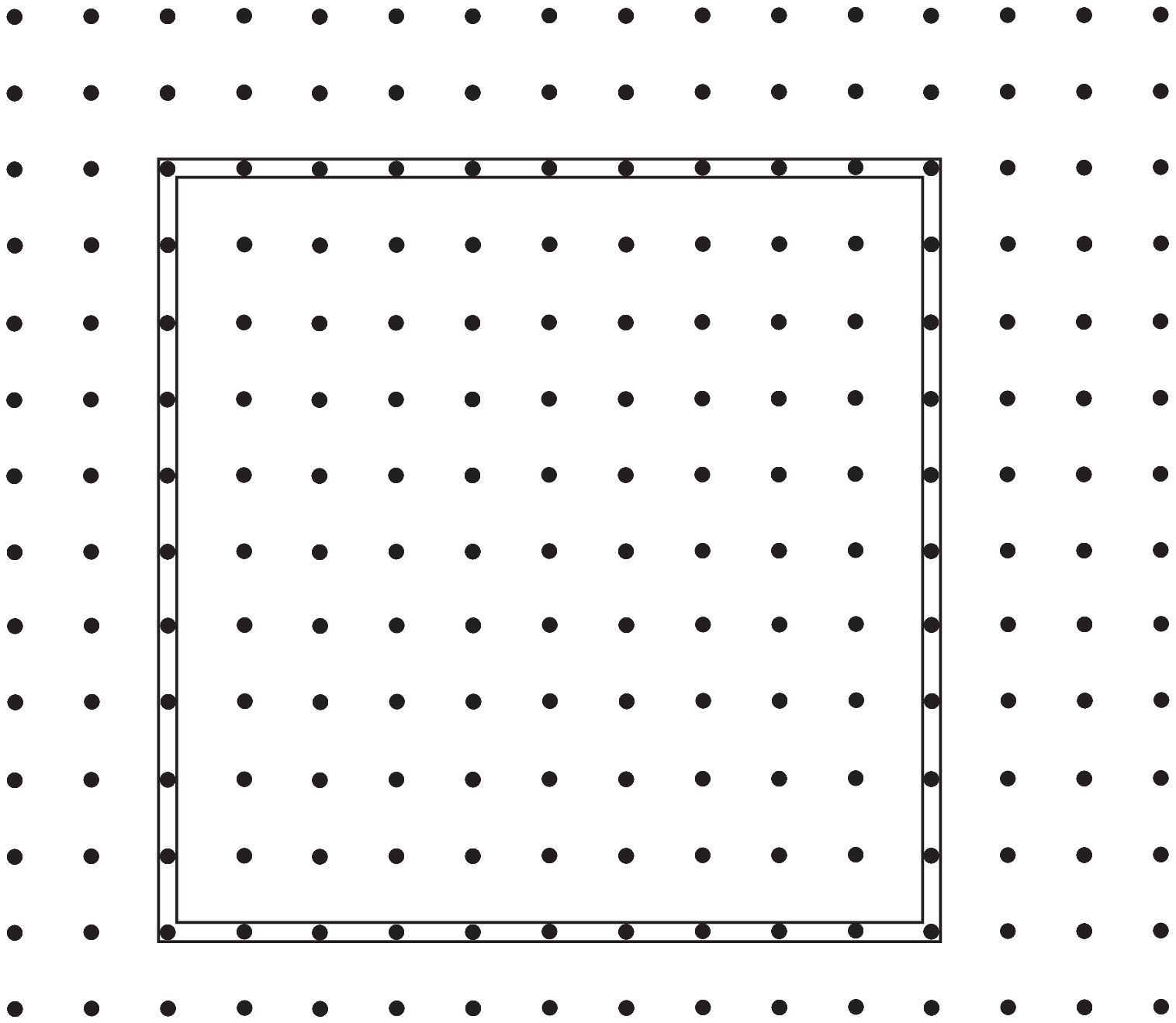}
\]
\end{center}
As a first (trivial) example we consider a square having sides parallel to the
axes. Then it is easy to check that the discrepancy is $\approx R$ for
infinitely many large values of $R$. Indeed we see that  the two squares of side $\approx R$ in the previous figure have essentially the same
area, but one has $\approx R$ integer points more than the other.

A suitable rotation of the square may make the discrepancy for the square very
small. H. Davenport (see \cite{Dav}) has proved that if a square $Q$ has slope
(say) $\sqrt{2}$ then%
\[
\int_{\mathbb{T}^{2}}\left\vert \mathcal{D}\left(  RQ+t\right)  \right\vert
^{2}\ dt\leqslant c\ \log\left(  R\right)  \ .
\]

A logarithmic estimate holds true also after averaging over rotations. In
\cite{BCT} it is proved that the discrepancy associated to a polygon $P$
satisfies, for $R \geqslant 2$,
\begin{equation}
\int_{SO\left(  2\right)  }\left\vert \mathcal{D}\left(  R\sigma\left(
P\right)  \right)  \right\vert \ d\sigma\leqslant c\ \log^{2}\left(  R\right)
\ .\label{L uno discrepanza}%
\end{equation}
Moreover this estimate is almost sharp in the following sense. For a triangle
$S\subset\mathbb{R}^{2}$ we have%
\[
\int_{\mathbb{T}^{2}}\int_{SO\left(  2\right)  }\left\vert \mathcal{D}\left(
R\sigma\left(  S\right)  +t\right)  \right\vert \ d\sigma dt\geqslant
c\ \log\left(  R\right)  \ .
\]

\section{Pointwise estimates for $\widehat{\chi}_{C_{\gamma}}\left(
\xi\right)  $}

To study the discrepancy for $C_{\gamma}$ we need careful estimates 
of the Fourier transform of the function
$\chi_{C_{\gamma}}\left(  t\right)  $. We start with a general result, see \cite{Pod91} and also \cite{BNW}
for a result in higher dimension.
\begin{theorem}
\label{chord}Let $C\subset\mathbb{R}^{2}$ be a strictly convex body with
piecewise smooth boundary. We write $\Theta=(\cos\theta,\sin\theta)$ and, for
$0\leqslant\theta<2\pi$ and small $\delta>0$, let%
\[
\lambda(\delta,\theta)=\left\{  t\in C:\delta+t\cdot\Theta=\sup\limits_{y\in
C}\left(  y\cdot\Theta\right)  \right\}
\]
be the chord perpendicular to $\Theta$ \textquotedblleft at distance
$\delta$ from the boundary\textquotedblright\ $\partial C$ of $C$ (see the
following figure). Then, there exist $c_{1}$ and $c_{2}$ independent of
$\theta$ such that, for $\rho>c_{1}$, we have%
\[
\left\vert \widehat{\chi}_{C}(\rho\Theta)\right\vert \leqslant c_{2}%
\,\rho^{-1}\left(  \left\vert \lambda(\rho^{-1},\theta)\right\vert +\left\vert
\lambda(\rho^{-1},\theta+\pi)\right\vert \right)  \;,
\]
where $\left\vert \lambda\right\vert $ denotes the length of the segment
$\lambda$.
\end{theorem}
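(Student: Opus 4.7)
The plan is to reduce the Fourier integral to a one-dimensional integral of the chord-length function. Rotate coordinates so that $\Theta=(0,1)$, and write $C=\{(x,y):y_-\leqslant y\leqslant y_+,\ a(y)\leqslant x\leqslant b(y)\}$. Let $\ell(y)=b(y)-a(y)$ be the length of the chord of $C$ at height $y$; Fubini's theorem then yields
\[
\widehat{\chi}_C(\rho\Theta)=\int_{y_-}^{y_+}\ell(y)\,e^{-2\pi i\rho y}\,dy\;.
\]
By convexity of $C$, the function $\ell$ is concave on $[y_-,y_+]$, vanishes at the endpoints, and attains its maximum at some $y^\ast\in(y_-,y_+)$. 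In this notation $|\lambda(\delta,\theta)|=\ell(y_+-\delta)$ and $|\lambda(\delta,\theta+\pi)|=\ell(y_-+\delta)$.

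Set $\delta=\rho^{-1}$. For $\rho>c_1$ large enough (depending only on the inradius of $C$) one has $y^\ast\in[y_-+\delta,y_+-\delta]$. Split the integral into the two end pieces $[y_-,y_-+\delta]$, $[y_+-\delta,y_+]$, and the middle piece $[y_-+\delta,y_+-\delta]$. Because $\ell$ is concave and vanishes at the endpoints, it is monotone on each end piece, so the trivial estimate gives
\[
\left|\int_{y_-}^{y_-+\delta}\ell(y)\,e^{-2\pi i\rho y}\,dy\right|\leqslant\delta\,\ell(y_-+\delta)=\rho^{-1}|\lambda(\rho^{-1},\theta+\pi)|,
\]
and similarly for the right end piece. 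Integrate by parts on the middle piece; the boundary terms contribute exactly
\[
\frac{1}{2\pi\rho}\bigl(\ell(y_-+\delta)+\ell(y_+-\delta)\bigr),
\]
already of the desired form.

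The remaining term is $(2\pi i\rho)^{-1}\int_{y_-+\delta}^{y_+-\delta}\ell'(y)\,e^{-2\pi i\rho y}\,dy$, and this is the main difficulty: $\ell'$ may blow up as $y\to y_\pm$ (think of the disc), so no crude $L^1$ or $L^\infty$ estimate controls it by the chord lengths alone. The remedy is to exploit the monotonicity of $\ell'$ inherited from concavity. Split at $y^\ast$ so that $\ell'$ is monotone on each subinterval, and apply Bonnet's second mean value theorem; on $[y_-+\delta,y^\ast]$ this yields
\[
\left|\int_{y_-+\delta}^{y^\ast}\ell'(y)\,e^{-2\pi i\rho y}\,dy\right|\leqslant\frac{\ell'(y_-+\delta)}{\pi\rho}\;.
\]
Concavity together with $\ell(y_-)=0$ gives $\ell(y_-+\delta)\geqslant\delta\,\ell'(y_-+\delta)$, hence $\ell'(y_-+\delta)\leqslant\rho\,|\lambda(\rho^{-1},\theta+\pi)|$, converting the potentially singular derivative back into a chord-length bound. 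The symmetric estimate on $[y^\ast,y_+-\delta]$ gives control by $|\lambda(\rho^{-1},\theta)|$, and assembling all the pieces yields the stated inequality with a constant $c_2$ independent of $\theta$.
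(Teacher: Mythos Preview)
Your proof is correct and reaches the same conclusion, but the mechanism differs from the paper's. Both arguments reduce to the one--dimensional transform of the chord function $h=\ell$ and exploit its concavity (equivalently, the monotonicity of $h'$). From there the paper, following Podkorytov, integrates by parts over the \emph{whole} interval and then averages the resulting integral of $h'$ against its half--period shift $x\mapsto x-\tfrac{1}{2\xi}$; this replaces $h'(x)$ by the increment $h'(x)-h'(x-\tfrac{1}{2\xi})$, whose $L^{1}$ norm telescopes to $h(-1+\tfrac{1}{2\xi})+h(1-\tfrac{1}{2\xi})$. You instead excise the two end pieces, integrate by parts on the middle, and control $\int\ell'\,e^{-2\pi i\rho y}$ by Bonnet's second mean value theorem together with the concavity bound $\ell'(y_{-}+\delta)\leqslant\delta^{-1}\ell(y_{-}+\delta)$. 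Each device converts the possibly singular $\ell'$ back into the chord lengths $\ell(y_{\pm}\mp\delta)$.

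One small caveat: the assertion that $y^{\ast}\in[y_{-}+\delta,\,y_{+}-\delta]$ for $\rho>c_{1}$ with $c_{1}$ ``depending only on the inradius'' is not quite right as stated; the inradius alone does not bound $\min(y^{\ast}-y_{-},\,y_{+}-y^{\ast})$ from below. What is true is that for a fixed strictly convex $C$ the maximiser $y^{\ast}(\theta)$ is unique and varies continuously with $\theta$, so by compactness $\inf_{\theta}\min(y^{\ast}-y_{-},\,y_{+}-y^{\ast})>0$ and a suitable $c_{1}=c_{1}(C)$ exists. The paper avoids this issue altogether: its shift trick does not need $y^{\ast}$ to sit in the middle piece, and when the maximiser is within $\tfrac{1}{2\xi}$ of an endpoint it handles that boundary term by a direct concavity estimate ($h(\beta)\leqslant 2h(0)\leqslant 2h(1-\tfrac{1}{2\xi})$). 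You could do the same in your framework and dispense with the compactness step.
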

\begin{center}%
\[
\includegraphics[scale=0.45]{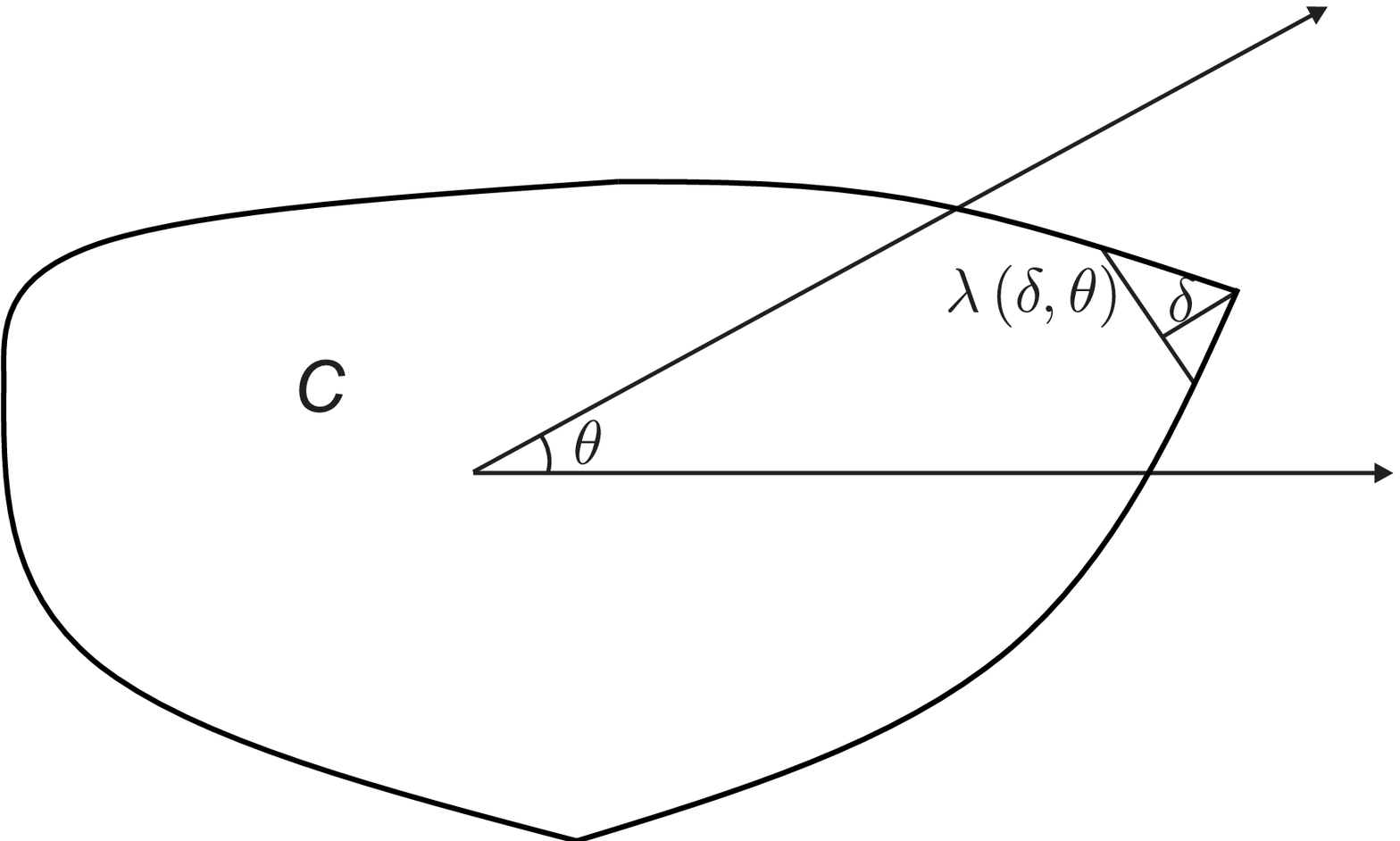}
\]
\end{center}

\medskip
\begin{proof}
We may assume $\Theta=(1,0)$, so that we consider%
\[
\widehat{\chi}_{C}(\xi,0)=\int_{-\infty}^{+\infty}\left(  \int_{-\infty
}^{+\infty}\chi_{C}(t_{1},t_{2})\ dt_{2}\right)  \ e^{-2\pi i\xi t_{1}}%
~dt_{1}=\widehat{h}(\xi)\;, \label{radon}
\]
where $h\left(  x\right)  $ is the length of the segment given by the
intersection of $C$ with the line $t_{1}=x$ (we can say that the $2$-dimensional Fourier transform is a $1$-dimensional Fourier transform of a Radon transform). Observe that the function
$h\left(  x\right)  $ is continuous on $\mathbb{R}$ and strictly concave on
its support, which we may assume to be the interval $\left[  -1,1\right]  $.
We may assume that $h\left(  x\right)  $ attains its maximum at some
$\beta\geqslant0$ (the other case being similar).%

\[
\includegraphics[scale=0.75]{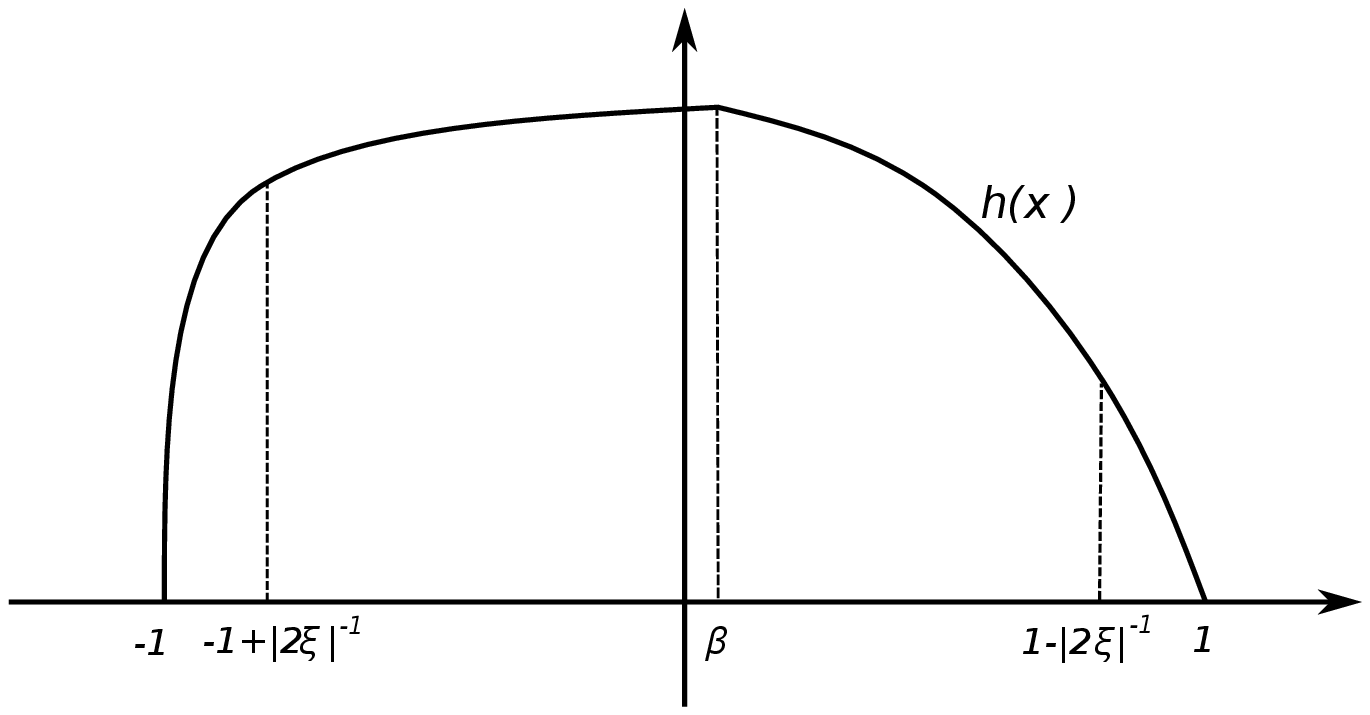}
\]
The strict convexity implies the continuity of $h\left(  x\right)  $, so that
$h\left(  -1\right)  =h\left(  1\right)  =0$. We may assume $\xi>1$. Then
integration by parts yields
\begin{align*}
\widehat{h}(\xi) &  =\int_{-1}^{1}h\left(  x\right)  e^{-2\pi i\xi
x}\ dx=\frac{1}{2\pi i\xi}\int_{-1}^{1}h^{\prime}\left(  x\right)  e^{-2\pi
i\xi x}\ dx\\
&  =\frac{-1}{2\pi i\xi}\int_{-1+\left(  2\xi\right)  ^{-1}}^{1+\left(
2\xi\right)  ^{-1}}h^{\prime}\left(  x-\frac{1}{2\xi}\right)  e^{-2\pi i\xi
x}\ dx\ .
\end{align*}
Hence%
\begin{align*}
2\left(  2\pi i\xi\right)  \widehat{h}(\xi) &  =\int_{-1}^{-1+\left(
2\xi\right)  ^{-1}}h^{\prime}\left(  x\right)  e^{-2\pi i\xi x}\ dx\\
&  +\int_{-1+\left(  2\xi\right)  ^{-1}}^{1}\left(  h^{\prime}\left(
x\right)  -h^{\prime}\left(  x-\frac{1}{2\xi}\right)  \right)  e^{-2\pi i\xi
x}\ dx\\
&  +\int_{1}^{1+\left(  2\xi\right)  ^{-1}}h^{\prime}\left(  x-\frac{1}{2\xi
}\right)  e^{-2\pi i\xi x}\ dx\\
&  =I_{1}+I_{2}+I_{3}\ ,
\end{align*}
say.  Since $h\left(  x\right)  $ is increasing on $-1\leqslant x\leqslant0$ we
have
\[
\left\vert I_{1}\right\vert \leqslant\int_{-1}^{-1+\left(  2\xi\right)  ^{-1}%
}\left\vert h^{\prime}\left(  x\right)  \right\vert \ dx=\int_{-1}^{-1+\left(
2\xi\right)  ^{-1}}h^{\prime}\left(  x\right)  \ dx=h\left(  -1+\frac{1}{2\xi
}\right)  .
\]
In the same way, since $h^{\prime}\left(  x\right)  $ is decreasing, we have%
\begin{align*}
\left\vert I_{2}\right\vert  &  \leqslant-\int_{-1+\left(  2\xi\right)  ^{-1}%
}^{1}\left(  h^{\prime}\left(  x\right)  -h^{\prime}\left(  x-\frac{1}{2\xi
}\right)  \right)  \ dx\\
&  =h\left(  -1+\frac{1}{2\xi}\right)  +h\left(  1-\frac{1}{2\xi}\right)  \ .
\end{align*}
In order to estimate $I_{3}$ we consider two cases. Let $\beta\in\left[
0,1\right]  $ be the point where $h\left(  x\right)  $ attains its maximum. If
$\beta\leqslant1-\left(  2\xi\right)  ^{-1}$ we argue as we did for $I_{1}$.
If $1-\left(  2\xi\right)  ^{-1}\leqslant\beta<1$ we have
\begin{align*}
\left\vert I_{3}\right\vert  &  =\left\vert \int_{1}^{1+\left(  2\xi\right)
^{-1}}h^{\prime}\left(  x-\frac{1}{2\xi}\right)  e^{-2\pi i\xi x}%
\ dx\right\vert \\
&  \leqslant\left\vert \int_{1}^{\beta+\left(  2\xi\right)  ^{-1}}h^{\prime
}\left(  x-\frac{1}{2\xi}\right)  e^{-2\pi i\xi x}\ dx\right\vert 
  +\left\vert \int_{\beta+\left(  2\xi\right)  ^{-1}}^{1+\left(  2\xi\right)
^{-1}}h^{\prime}\left(  x-\frac{1}{2\xi}\right)  e^{-2\pi i\xi x}%
\ dx\right\vert \\
&  \leqslant\int_{1}^{\beta+\left(  2\xi\right)  ^{-1}}\left\vert h^{\prime
}\left(  x-\frac{1}{2\xi}\right)  \right\vert \ dx+\int_{\beta+\left(
2\xi\right)  ^{-1}}^{1+\left(  2\xi\right)  ^{-1}}\left\vert h^{\prime}\left(
x-\frac{1}{2\xi}\right)  \right\vert \ dx\\
&  \leqslant\int_{1}^{\beta+\left(  2\xi\right)  ^{-1}}h^{\prime}\left(
x-\frac{1}{2\xi}\right)  \ dx-\int_{\beta+\left(  2\xi\right)  ^{-1}%
}^{1+\left(  2\xi\right)  ^{-1}}h^{\prime}\left(  x-\frac{1}{2\xi}\right)
\ dx\\
&  =2h\left(  \beta\right)  -h\left(  1-\frac{1}{2\xi}\right)  \leqslant
4h\left(  0\right)  -h\left(  1-\frac{1}{2\xi}\right)  \leqslant3h\left(
1-\frac{1}{2\xi}\right)  \ ,
\end{align*}
by the concavity of $h\left(  x\right)  $. This completes the proof.
\end{proof}

\begin{corollary}
\label{postivecurvature}Let $C$ be a planar convex body having smooth boundary
with strictly positive curvature. Then, for every $\left\vert \xi\right\vert
\geqslant1$, we have
\begin{equation}
\left\vert \widehat{\chi}_{C}\left(  \xi\right)  \right\vert \leqslant
\kappa\ \left\vert \xi\right\vert ^{-3/2}\label{stimacurvpos}%
\end{equation}
(where $\kappa\ $depends on $C$) .
\end{corollary}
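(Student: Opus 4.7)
The plan is to invoke Theorem \ref{chord} and reduce the problem to a uniform estimate on the chord length $|\lambda(\delta,\theta)|$. Writing $\xi = \rho\Theta$ with $\rho = |\xi| \geqslant 1$, the theorem already supplies the factor $\rho^{-1}$, so it remains to show
\[
|\lambda(\rho^{-1},\theta)| \leqslant c\,\rho^{-1/2}
\]
uniformly in $\theta$, which will give the missing $\rho^{-1/2}$ and complete the bound.

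To obtain this chord estimate, I would fix $\theta$ and let $P(\theta)\in\partial C$ be the (unique, by strict convexity) boundary point whose outward unit normal is $\Theta$. In an orthonormal frame centred at $P(\theta)$ with first axis tangent to $\partial C$ and second axis along $-\Theta$, the boundary is locally the graph of a smooth function $y=f_\theta(x)$ with $f_\theta(0)=f_\theta'(0)=0$ and $f_\theta''(0)=\kappa(P(\theta))\geqslant c>0$. By Taylor's theorem there exist a neighbourhood $|x|\leqslant x_0$ and a constant $M$, both uniform in $\theta$ by compactness of $\partial C$ and smoothness of the boundary, such that
\[
f_\theta(x)\geqslant \tfrac{1}{2}\kappa(P(\theta))\,x^2 - M|x|^3 \geqslant \tfrac{c}{4}\,x^2
\qquad \text{for } |x|\leqslant x_0.
\]
The chord $\lambda(\delta,\theta)$ is the segment where $f_\theta(x)\leqslant \delta$, so once $\delta$ is small enough (which happens for $\rho$ large), $|\lambda(\delta,\theta)|\leqslant 2\sqrt{4\delta/c}$. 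Setting $\delta = \rho^{-1}$ gives the desired uniform bound $|\lambda(\rho^{-1},\theta)|\leqslant c'\rho^{-1/2}$. The analogous bound for $\lambda(\rho^{-1},\theta+\pi)$ follows identically by replacing $\Theta$ with $-\Theta$.

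Combining with Theorem \ref{chord} yields $|\widehat{\chi}_C(\rho\Theta)|\leqslant c_2\rho^{-1}\cdot c'\rho^{-1/2}=\kappa\rho^{-3/2}$, which is the claim for $|\xi|\geqslant c_1$; the remaining range $1\leqslant|\xi|\leqslant c_1$ is handled by adjusting the constant, since $\widehat{\chi}_C$ is continuous and bounded.

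The main obstacle is establishing the uniform (in $\theta$) quadratic lower bound on $f_\theta$, i.e.\ ensuring that the Taylor remainder can be absorbed on a neighbourhood whose size does not depend on $\theta$. This is where the smoothness of $\partial C$ together with the uniform positive lower bound on the curvature enters essentially: both $\kappa(P(\theta))$ and the $C^3$-norm of the local graphs can be controlled uniformly thanks to the compactness of $\partial C$.
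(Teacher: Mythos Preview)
Your proposal is correct and follows essentially the same route as the paper: both invoke Theorem~\ref{chord} and reduce to showing $|\lambda(\delta,\theta)|\leqslant c\,\delta^{1/2}$ uniformly in $\theta$, by writing $\partial C$ locally as a graph and using the curvature lower bound to get a quadratic growth estimate. The only cosmetic difference is that the paper obtains $g(y)\approx y^2$ via the integral representation $g(y)=\int_0^y (y-t)g''(t)\,dt$ together with $g''\approx K$, whereas you use Taylor expansion with a cubic remainder controlled by compactness; your explicit remark on uniformity in $\theta$ and the handling of the range $1\leqslant|\xi|\leqslant c_1$ are in fact slightly more careful than what the paper writes out.
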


\begin{proof}
We choose a point in $\partial C$, which we may assume to be the origin. We
also assume that $C$ is contained in the right half-plane and that $C$
contains a ball of radius $1$. For the sake of simplicity, we may also assume
that $\partial C$ is locally (that is for $\left\vert y\right\vert \leqslant
c$) the graph of an even function $g\left(  y\right)  $ satisfying $g\left(
0\right)  =g^{\prime}\left(  0\right)  =0$ and $\left\vert g^{\prime}\left(
y\right)  \right\vert \leqslant c$. Hence we consider only $0\leqslant
y\leqslant c$, so that $2g\left(  y\right)  $ is the inverse of the function
$h\left(  x\right)  $ described at the beginning of the proof of Theorem
\ref{chord}. Moreover our assumptions imply that (see again Theorem
\ref{chord} for the notation) 
\[
h\left(  \delta\right)  =\frac{1}{2}\left\vert
\lambda\left(  \delta,-\pi\right)  \right\vert
\]
and $h\left(  \delta\right)
$ is strictly increasing for $0\leqslant\delta\leqslant1$. The curvature
$K\left(  y\right)  $ at the point $\left(  g\left(  y\right)  ,y\right)
\in\partial C$ satisfies $c_{1}\leqslant K\left(  y\right)  \leqslant c_{2}$
(where $c_{1}$ and $c_{2}$ depend on the convex body $C$). Since%
\[
g^{\prime\prime}\left(  y\right)  =\left(  1+\left[  g^{\prime}\left(
y\right)  \right]  ^{2}\right)  ^{3/2}K\left(  y\right)  \ ,
\]
we have %
\[
g\left(  y\right)  =\int_{0}^{y}\left(  y-t\right)  g^{\prime\prime}\left(
t\right)  \ dt\approx\int_{0}^{y}\left(  y-t\right)  \ dt\approx y^{2} \;,
\]%
where $A\approx B$ means that $A$ and $B$ are positive and, for suitable
constants $c_{1},c_{2}$, we have $c_{1}A\leqslant B\leqslant c_{2}A$.
\[
\includegraphics[width=5cm]{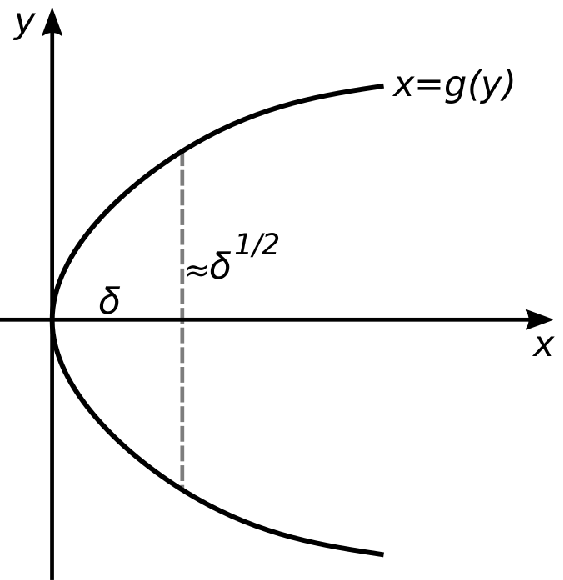}
\]
Then Theorem \ref{chord} yields%
\[
\left\vert \lambda\left(  \delta,-\pi\right)  \right\vert =h\left(
\delta\right)  \approx\delta^{1/2}%
\]
and therefore (\ref{stimacurvpos}).
\end{proof}

\begin{remark}
The estimate (\ref{stimacurvpos}) still holds under the less strict and more geometric
assumption that $C$ is a convex body that can roll unimpeded inside a disc.
See \cite{BCGT}. Observe that no convex polygon or convex body with smooth boundary having a flat point of order $> 2$ can roll unimpeded inside a disc.
\end{remark}

\begin{center}
\centering\begin{tikzpicture}[line cap=round,line join=round,>=stealth,scale=0.45]
\filldraw[fill=green!25,fill opacity=0.5] plot[shift={(11.5358983849,-1.)},line width=1.pt,domain=-0.523598775598:0.523598775598,variable=\t
]({1.*4.*cos(\t r)+0.*4.*sin(\t r)},{0.*4.*cos(\t r)+1.*4.*sin(\t r)}) --
plot[shift={(15.,-3.)},line width=1.pt,domain=1.57079632679:2.61799387799,variable=\t]({1.*4.*cos(\t
r)+0.*4.*sin(\t r)},{0.*4.*cos(\t r)+1.*4.*sin(\t r)}) --
plot[shift={(15.,1.)},line width=1.pt,domain=3.66519142919:4.71238898038,variable=\t]({1.*4.*cos(\t
r)+0.*4.*sin(\t r)},{0.*4.*cos(\t r)+1.*4.*sin(\t r)});
\draw[line width=1.pt] (11.,-3.) circle (5.65685424949cm);
\draw
[line width=0.5pt,dash pattern=on 3pt off 3pt] (11.2824898206,2.64979641237)-- (16.6519159808,-2.7636829555);
\draw[->,line width=1.pt] (5.,0.)-- (9.,4.);
\draw
[<->,line width=0.2pt,dash pattern=on 1pt off 1pt] (15,1) -- (13.96,-0.04);
\draw(12.662833788,-0.728652444091) node[anchor=north west] {$\Omega$};
\draw(8,-3) node[anchor=north west] {$\Delta$};
\draw(5.97500419166,3.09871991528) node[anchor=north west] {$\theta$};
\draw(14.1,.8) node[anchor=north west] {$\scriptsize{\delta}$};
\end{tikzpicture}


\end{center}

\begin{remark}
\label{pococurv}Assume that $C$ is a convex planar body with piecewise smooth
boundary. Without any assumptions on the curvature the estimate
(\ref{stimacurvpos}) may fail. However Theorem \ref{radon} and integration by parts
show that
\begin{equation}
\left\vert \widehat{\chi}_{C}\left(  \xi\right)  \right\vert \leqslant
c\left\vert \xi\right\vert ^{-1}\label{decay 1} \;,
\end{equation}
whenever $\left\vert \xi\right\vert \geqslant1$.
\end{remark}

We can now state and prove some useful pointwise estimates for the decay of
$\widehat{\chi}_{C_{\gamma}}\left(  \xi\right)  .$ See \cite{BRT}.

\begin{theorem}
\label{decadCgamma}Let $\gamma>2$ and let $C_{\gamma}$ be as in the
Introduction, let $\psi\in\left(  -\pi/2,\pi/2\right]  $, let either $\theta=\psi
-\pi/2$ or $\theta=\psi+\pi/2$ and let $\Theta=\left(
\cos  \theta ,\sin  \theta \right)  $. Then, for
$\rho\geqslant2$ we have (for small $\varepsilon>0$ and suitable positive
constants $c,c_{1}$)
\begin{equation}
\left\vert \widehat{\chi}_{C_{\gamma}}\left(  \rho\Theta\right)  \right\vert
\leqslant\left\{
\begin{array}
[c]{lll}%
c\rho^{-1-1/\gamma} &  & \text{for }0\leqslant\left\vert \psi\right\vert
\leqslant c_{1}\rho^{-1+1/\gamma},\\
c\rho^{-3/2}\left\vert \psi\right\vert ^{\left(  2-\gamma\right)  /\left(
2\gamma-2\right)  } &  & \text{for }c_{1}\rho^{-1+1/\gamma}\leqslant\left\vert
\psi\right\vert \leqslant\varepsilon,\\
c\rho^{-3/2} &  & \text{for }\varepsilon\leqslant\psi\leqslant\pi.
\end{array}
\right.  \label{inequalities}%
\end{equation}

\end{theorem}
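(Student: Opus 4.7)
The plan is to apply Theorem \ref{chord} and reduce the task to estimating the two chord lengths $|\lambda(\rho^{-1},\theta)|$ and $|\lambda(\rho^{-1},\theta+\pi)|$. By the symmetries $\psi\mapsto-\psi$ and $\theta\mapsto\theta+\pi$, I may take $\psi\geq 0$ and $\theta=\psi-\pi/2$, so that $\Theta=(\sin\psi,-\cos\psi)$. The direction $\theta+\pi$ then corresponds to $-\Theta$, whose supporting line kisses $\partial C_{\gamma}$ in the positive-curvature part; there the argument of Corollary \ref{postivecurvature} applies verbatim and yields $|\lambda(\rho^{-1},\theta+\pi)|\leq c\rho^{-1/2}$. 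The whole problem thus reduces to controlling the length of the chord cut from $C_{\gamma}$ on the flat side.

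Parametrize the flat portion of $\partial C_{\gamma}$ by $x\mapsto(x,|x|^{\gamma})$ and introduce the one-variable height function
\[
h(x)=(x,|x|^{\gamma})\cdot\Theta=x\sin\psi-|x|^{\gamma}\cos\psi.
\]
The endpoints of $\lambda(\rho^{-1},\theta)$ on $\partial C_{\gamma}$ are the solutions of $h(x_{0})-h(x)=\rho^{-1}$, where $x_{0}=(\tan\psi/\gamma)^{1/(\gamma-1)}\approx\psi^{1/(\gamma-1)}$ maximizes $h$ (with $x_{0}=0$ when $\psi=0$). The three cases of the theorem correspond to three geometric regimes for how $x_{0}$ and the chord endpoints sit with respect to the flat vertex.

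When $0\leq\psi\leq c_{1}\rho^{-1+1/\gamma}$, one has $x_{0}\lesssim\rho^{-1/\gamma}$ and the chord straddles the origin; the rough estimate $|x|^{\gamma}\cos\psi=x\sin\psi-h(x_{0})+\rho^{-1}\lesssim\rho^{-1}$ at each endpoint (using $|x|\psi\lesssim\rho^{-1}$ whenever $|x|\lesssim\rho^{-1/\gamma}$) forces $|x|\lesssim\rho^{-1/\gamma}$, hence $|\lambda(\rho^{-1},\theta)|\lesssim\rho^{-1/\gamma}$. When $c_{1}\rho^{-1+1/\gamma}\leq\psi\leq\varepsilon$, the chord sits in a small neighbourhood of $x_{0}>0$ and a second-order Taylor expansion yields
\[
h(x_{0})-h(x)=\tfrac{1}{2}\gamma(\gamma-1)x_{0}^{\gamma-2}\cos\psi\,(x-x_{0})^{2}(1+o(1)),
\]
whence $|x-x_{0}|\approx(\rho^{-1}x_{0}^{2-\gamma})^{1/2}$ and $|\lambda(\rho^{-1},\theta)|\lesssim\rho^{-1/2}x_{0}^{(2-\gamma)/2}\approx\rho^{-1/2}\psi^{(2-\gamma)/(2\gamma-2)}$. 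Finally, for $\psi\geq\varepsilon$ the support point of $C_{\gamma}$ in direction $\Theta$ leaves the flat neighbourhood and lies in the positive-curvature part of $\partial C_{\gamma}$, so Corollary \ref{postivecurvature} again delivers $|\lambda(\rho^{-1},\theta)|\lesssim\rho^{-1/2}$. Plugging these three bounds, together with the factor $\rho^{-1}$ supplied by Theorem \ref{chord}, yields precisely the three inequalities in \eqref{inequalities}.

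The main technical hurdle is the intermediate regime: one must verify that the cubic Taylor remainder of $h$ is genuinely negligible compared with the quadratic term, equivalently that $|x-x_{0}|\ll x_{0}$ at the chord endpoints. This is exactly what the threshold $\psi\geq c_{1}\rho^{-1+1/\gamma}$ is designed to guarantee, and calibrating $c_{1}$ and $\varepsilon$ so that the three regimes overlap and jointly cover $[0,\pi/2]$ is routine book-keeping.
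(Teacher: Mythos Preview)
Your outline is correct and follows the paper's first proof almost exactly: reduce to Theorem~\ref{chord}, dispose of the far-side chord via Corollary~\ref{postivecurvature}, and estimate the near-side chord in the three regimes by analyzing the solutions of $h(x_0)-h(x)=\rho^{-1}$.

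One point worth sharpening: in the intermediate regime you invoke a second-order Taylor expansion with a $(1+o(1))$ remainder, which presupposes $|x-x_0|\ll x_0$; but at the threshold $\psi\approx c_1\rho^{-1+1/\gamma}$ one only gets $|x-x_0|/x_0=O(1)$, so the remainder is a priori comparable to the main term and a bootstrap is needed to close the argument. The paper sidesteps this by using the global convexity inequality
\[
(1+u)^{\gamma}-\gamma u-1\;\geq\;\tfrac{\gamma}{2}u^{2}\qquad(u>-1),
\]
which gives the quadratic lower bound for $h(x_0)-h(x)$ directly without any smallness assumption on $u=(x-x_0)/x_0$. Your approach can be made to work by taking $c_1$ large (so that $|x-x_0|/x_0\lesssim c_1^{-\gamma/(2\gamma-2)}$ becomes small), but the exact inequality is cleaner. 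Similarly, your first-regime argument is phrased a bit circularly; the paper simply checks that for $x>\kappa\rho^{-1/\gamma}$ with $\kappa$ large the equation has no solution, which is more direct than the implicit bootstrap you sketch.
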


This theorem is the basic result in this paper and we are going to write two
proofs of it.

In the first proof we use elementary arguments to estimate the chords
introduced in Theorem \ref{chord}.

In the second proof we apply the divergence theorem to pass from
$\widehat{\chi}_{C_{\gamma}}$ to $\widehat{\mu}_{\gamma}$, where $\mu_{\gamma
}$ is the measure on $\mathbb{R}^{2}$, supported on $\partial C_{\gamma}$,
where it coincides with the arc-lenght measure. Then we use a partition of
unity to split $\partial C_{\gamma}$ into dyadic pieces.

\begin{proof}
[First proof of Theorem \ref{decadCgamma}]Assume $\psi>0$ and let $x_{0}>0$
satisfy $\gamma x_{0}^{\gamma-1}=\tan\psi$, that is $\left(  x_{0}%
,x_{0}^{\gamma}\right)  $ is the point in $\partial C_{\gamma}$ with outward
unit normal $\Theta$. Let $x_{1}<x_{2}$ be the two solutions of the equation%
\begin{equation}
\left\vert x\right\vert ^{\gamma}=x_{0}^{\gamma}+\left(  \rho\cos
\psi  \right)  ^{-1}+\gamma x_{0}^{\gamma-1}\left(  x-x_{0}\right)
\ ,\label{equationgamma}%
\end{equation}
(of course $x_{1}<x_{0}<x_{2}$, while the assumption $\psi>0$ yields
$\left\vert x_{1}\right\vert <x_{2}$). We observe that $\left\vert
\lambda\left(  \rho^{-1},\psi\right)  \right\vert \leqslant cx_{2}$ and we now
estimate $x_{2}$. 
The inequality $0\leqslant\psi\leqslant c_{1}\rho^{-1+1/\gamma}$ implies that
the equation (\ref{equationgamma}) has no solution when $x>\kappa
\rho^{-1/\gamma}$ with a suitably large constant $\kappa$. Indeed since
$x_{0}^{\gamma-1}\approx\psi$ we have $x_{0}\approx\psi^{1/\left(
\gamma-1\right)  }\leqslant c\rho^{-1/\gamma}$ so that%
\begin{align*}
&  x^{\gamma}-x_{0}^{\gamma}-\left(  \rho\cos \psi \right)
^{-1}-\gamma x_{0}^{\gamma-1}\left(  x-x_{0}\right)  \\
&  >x^{\gamma}-c\rho^{-1}-\left(  \rho\cos\psi  \right)
^{-1}-c\rho^{-1+1/\gamma}x\\
&  >\rho^{-1}\left(  \left(  \rho^{1/\gamma}x\right)  ^{\gamma}-c-\left(
\cos \psi  \right)  ^{-1}-c\rho^{1/\gamma}x\right)  >0
\end{align*}
provided that $\rho^{1/\gamma}x$ is large enough. 
\[
\includegraphics[width=10cm]{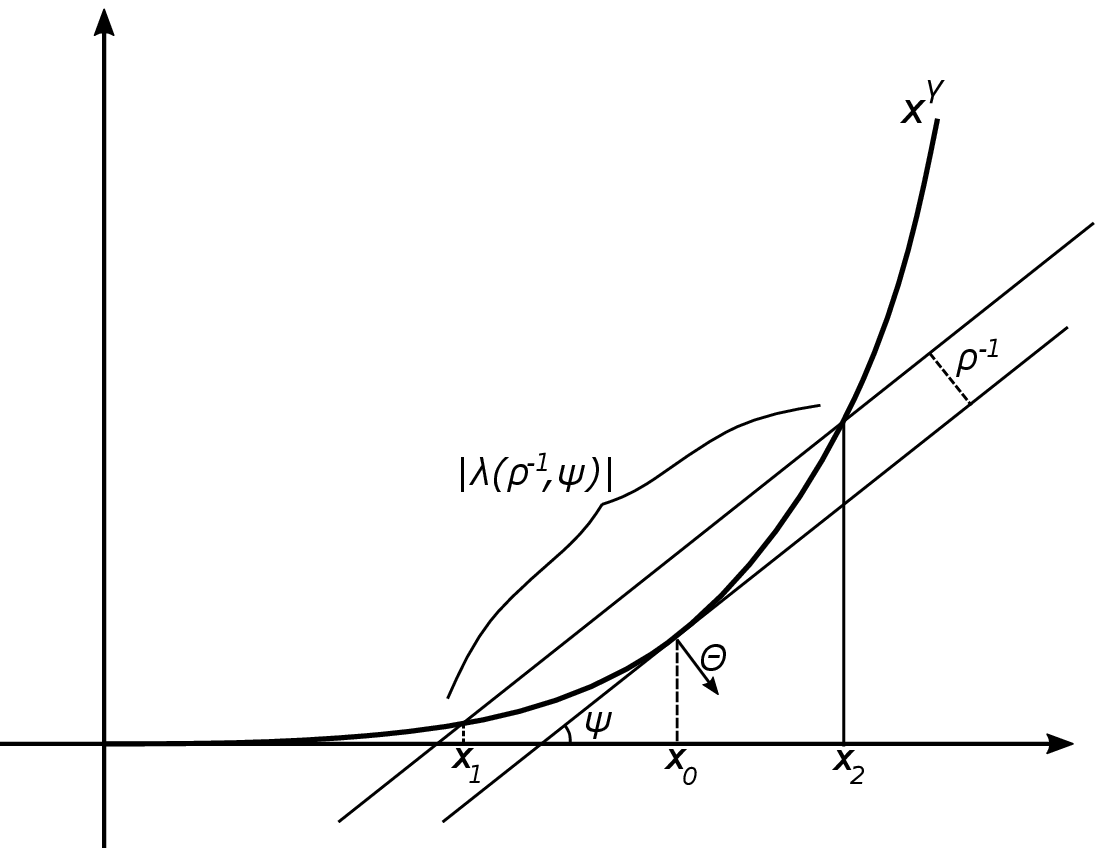}
\]
Let us now assume
$c\rho^{-1+1/\gamma}\leqslant\psi\leqslant\varepsilon$ with a suitable
constant $c$. Since $x_{0}^{\gamma-1}\approx\psi$ we have $x_{1}>0$. Indeed,
let%
\[
y\left(  x\right)  =x_{0}^{\gamma}+\left(  \rho\cos \psi
\right)  ^{-1}+\gamma x_{0}^{\gamma-1}\left(  x-x_{0}\right)  \ .
\]
Let $\psi\geqslant\widetilde{c}\ \rho^{-1+1/\gamma}$ (we shall choose
$\widetilde{c}$ later). Then%
\[
y\left(  0\right)  =\left(  1-\gamma\right)  x_{0}^{\gamma}+\left(  \rho
\cos \psi  \right)  ^{-1}\leqslant\left(  1-\gamma\right)
\ c\ \widetilde{c}\ \rho^{-1}+\left(  \rho\cos \psi  \right)
^{-1}<0
\]
if $\widetilde{c}$ is large enough. Then we observe that, assuming $\left\vert
x-x_{0}\right\vert \geqslant c^{\prime}\rho^{-1/2}x_{0}^{1-\gamma/2}$ with a
suitable choice of $c^{\prime}$, we obtain%
\begin{align*}
&  x^{\gamma}-x_{0}^{\gamma}-\left(  \rho\cos \psi  \right)
^{-1}-\gamma x_{0}^{\gamma-1}\left(  x-x_{0}\right)  \\
&  =\left(  x_{0}+\left(  x-x_{0}\right)  \right)  ^{\gamma}-x_{0}^{\gamma
}-\left(  \rho\cos \psi  \right)  ^{-1}-\gamma x_{0}^{\gamma
-1}\left(  x-x_{0}\right)  \\
&  =x_{0}^{\gamma}\left(  \left(  1+\frac{x-x_{0}}{x_{0}}\right)  ^{\gamma
}-\gamma\frac{x-x_{0}}{x_{0}}-1\right)  -\left(  \rho\cos \psi
\right)  ^{-1}\\
&  \geqslant x_{0}^{\gamma}\ \frac{\gamma}{2}\left(  \frac{x-x_{0}}{x_{0}%
}\right)  ^{2}-\left(  \rho\cos \psi  \right)  ^{-1}%
\geqslant\rho^{-1}\left(  c\ c^{\prime}\ \frac{\gamma}{2}-\left(  \cos\psi  
\right)  ^{-1}\right)  >0\ ,
\end{align*}
since $\frac{x-x_{0}}{x_{0}}>-1$. Observe that we have used the inequality 
\[
\left(
1+u\right)  ^{\gamma}-\gamma u-1\geqslant\gamma u^{2}/2 \;.
\] 
Then $\left\vert
x-x_{0}\right\vert \leqslant c\rho^{-1/2}x_{0}^{1-\gamma/2}$ for every
$x_{1}\leqslant x\leqslant x_{2}$. Therefore%
\[
\left\vert \lambda\left(  \rho,\psi\right)  \right\vert \leqslant c\rho
^{-1/2}x_{0}^{1-\gamma/2}\leqslant c\rho^{-1/2}\psi^{\left(  2-\gamma\right)
/\left(  2\gamma-2\right)  }\ .
\]
Finally let $\varepsilon\leqslant\psi\leqslant\pi$. Then Remark \ref{pococurv}
yields $\left\vert \lambda\left(  \rho,\psi\right)  \right\vert \leqslant
c\rho^{-1/2}$. Collecting the above results and applying Theorem \ref{chord}
we complete the proof.
\end{proof}

For the second proof of Theorem \ref{decadCgamma} we need some well-known lemmas
(see e.g. \cite{Kra},\cite{Mon}, \cite{Ste}). 

\begin{lemma}
\label{Lemma f1}Let $f\in C^{1}\left(  \left[  a,b\right]  \right)  $ be a
convex function such that%
\[
f^{\prime}\left(  x\right)  \geqslant\lambda>0
\]
and let $\varphi$ be a smooth function $\left[  a,b\right]  $. Then%
\[
\left\vert \int_{a}^{b}e^{2\pi if\left(  x\right)  }\varphi\left(  x\right)
dx\right\vert \leqslant\frac{1}{\lambda}\left[  \left\vert \varphi\left(
b\right)  \right\vert +\int_{a}^{b}\left\vert \varphi^{\prime}\left(
x\right)  \right\vert dx\right]  \ .
\]

\end{lemma}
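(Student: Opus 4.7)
The plan is to reduce the weighted estimate to the unweighted one, namely to the standard bound $|\int_{a}^{y}e^{2\pi if(t)}\,dt|\leqslant 1/\lambda$, via a single integration by parts that moves the smoothness from the phase onto the amplitude.

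First I would set $F(x)=\int_{a}^{x}e^{2\pi if(t)}\,dt$, so that $F(a)=0$ and $F$ is $C^{1}$ with $F'(x)=e^{2\pi if(x)}$. Integrating by parts on $[a,b]$ gives
\[
\int_{a}^{b}e^{2\pi if(x)}\varphi(x)\,dx \;=\; \varphi(b)F(b)-\int_{a}^{b}\varphi'(x)F(x)\,dx,
\]
and taking absolute values yields
\[
\left|\int_{a}^{b}e^{2\pi if(x)}\varphi(x)\,dx\right| \;\leqslant\; \|F\|_{\infty}\Bigl[|\varphi(b)|+\int_{a}^{b}|\varphi'(x)|\,dx\Bigr].
\]
Thus the whole lemma reduces to proving the uniform bound $\|F\|_{\infty}\leqslant 1/\lambda$.

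This last bound is where convexity is used. Since $f'\geqslant\lambda>0$, $f$ is strictly increasing, so I can change variable $u=f(t)$ to write
\[
F(y)=\int_{f(a)}^{f(y)}\frac{e^{2\pi iu}}{f'(f^{-1}(u))}\,du=\int_{f(a)}^{f(y)}g(u)e^{2\pi iu}\,du,
\]
where $g(u):=1/f'(f^{-1}(u))$. Convexity of $f$ makes $f'$ non-decreasing and hence $g$ positive and non-increasing, with $g\leqslant 1/\lambda$. Applying Bonnet's second mean value theorem to the real and imaginary parts separately produces $\xi_{1},\xi_{2}\in[f(a),f(y)]$ such that
\[
F(y)=g(f(a))\left(\int_{f(a)}^{\xi_{1}}\cos(2\pi u)\,du+i\int_{f(a)}^{\xi_{2}}\sin(2\pi u)\,du\right),
\]
and each inner integral is bounded by $1/\pi$. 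Consequently $|F(y)|\leqslant \sqrt{2}/(\pi\lambda)<1/\lambda$, which combined with the integration by parts above finishes the proof.

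The only real subtlety is that the statement assumes merely $f\in C^{1}$, so one cannot use $f''$ directly; the change-of-variable plus Bonnet route above sidesteps this cleanly, though one could equivalently mollify $f$ into a smooth convex approximation and pass to the limit. Everything else is routine.
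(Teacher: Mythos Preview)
Your proof is correct and follows the same two-step skeleton as the paper: define $F(x)=\int_a^x e^{2\pi if}$, integrate by parts against $\varphi$, and then establish the uniform bound $\|F\|_\infty\leqslant 1/\lambda$. The only difference is in how that last bound is obtained. The paper writes $e^{2\pi if}=\tfrac{1}{2\pi i f'}\,(e^{2\pi if})'$ and integrates by parts directly, using that $1/f'$ is monotone to control the resulting integral; you instead change variable $u=f(t)$ and invoke Bonnet's second mean value theorem on the monotone weight $1/f'(f^{-1}(u))$. These are two standard proofs of the same van der Corput first-derivative estimate. Your route has the small advantage that it works verbatim under the stated $C^1$ hypothesis, whereas the paper's integration by parts tacitly differentiates $1/f'$ and so, read literally, needs $f\in C^2$ (or a Stieltjes reformulation). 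Otherwise the arguments are interchangeable.
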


\begin{proof}
Integration by parts yields%
\begin{align*}
\int_{a}^{b}e^{2\pi if\left(  x\right)  }dx &  =\int_{a}^{b}\frac{1}{2\pi
if^{\prime}\left(  x\right)  }\frac{d}{dx}\left(  e^{2\pi if\left(  x\right)
}\right)  dx\\
&  =\frac{1}{2\pi if^{\prime}\left(  b\right)  }e^{2\pi if\left(  b\right)
}-\frac{1}{2\pi if^{\prime}\left(  a\right)  }e^{2\pi if\left(  a\right)  }\\
&  -\int_{a}^{b}\frac{d}{dx}\left(  \frac{1}{2\pi if^{\prime}\left(  x\right)
}\right)  e^{2\pi if\left(  x\right)  }dx\ .
\end{align*}
Hence%
\begin{align*}
\left\vert \int_{a}^{b}e^{2\pi if\left(  x\right)  }dx\right\vert  &
\leqslant\frac{1}{2\pi f^{\prime}\left(  b\right)  }+\frac{1}{2\pi f^{\prime
}\left(  a\right)  }-\frac{1}{2\pi}\int_{a}^{b}\frac{d}{dx}\left(  \frac
{1}{f^{\prime}\left(  x\right)  }\right)  dx\\
&  =\frac{1}{2\pi f^{\prime}\left(  b\right)  }+\frac{1}{2\pi f^{\prime
}\left(  a\right)  }+\frac{1}{2\pi}\frac{1}{f^{\prime}\left(  b\right)
}-\frac{1}{2\pi}\frac{1}{f^{\prime}\left(  a\right)  }\\
&  =\frac{1}{\pi f^{\prime}\left(  b\right)  }\leqslant\frac{1}{\lambda}\ .
\end{align*}
Let now%
\[
G\left(  x\right)  =\int_{a}^{x}e^{2\pi if\left(  t\right)  }dt\ .
\]
Then%
\[
\int_{a}^{b}e^{2\pi if\left(  x\right)  }\varphi\left(  x\right)  dx=\left[
G\left(  x\right)  \varphi\left(  x\right)  \right]  _{a}^{b}-\int_{a}%
^{b}G\left(  x\right)  \varphi^{\prime}\left(  x\right)  dx
\]
and therefore%
\begin{align*}
\left\vert \int_{a}^{b}e^{2\pi if\left(  x\right)  }\varphi\left(  x\right)
dx\right\vert  &  \leqslant\left\vert G\left(  b\right)  \varphi\left(
b\right)  \right\vert +\int_{a}^{b}\left\vert G\left(  x\right)  \right\vert
\left\vert \varphi^{\prime}\left(  x\right)  \right\vert dx\\
&  \leqslant\frac{1}{\lambda}\left\vert \varphi\left(  b\right)  \right\vert
+\frac{1}{\lambda}\int_{a}^{b}\left\vert \varphi^{\prime}\left(  x\right)
\right\vert dx\ .
\end{align*}

\end{proof}

\begin{lemma}
\label{Lemma f2}Let $f\in C^{2}\left(  \left[  a,b\right]  \right)  $ satisfy
$f^{\prime\prime}\left(  x\right)  \geqslant\kappa>0$ and let $\varphi$ be a
smooth function on $\left[  a,b\right]  $. Then%
\[
\left\vert \int_{a}^{b}e^{2\pi if\left(  x\right)  }\varphi\left(  x\right)
dt\right\vert \leqslant\frac{4\left\Vert \varphi\right\Vert _{\infty}}%
{\sqrt{\kappa}}+\frac{2\left\Vert \varphi^{\prime}\right\Vert _{1}}%
{\sqrt{\kappa}}\ .
\]

\end{lemma}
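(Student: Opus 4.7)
The plan is to reduce the bound to the first-derivative estimate of Lemma \ref{Lemma f1} by excising a short interval around the (possible) critical point of $f$, where the phase is essentially stationary and no cancellation can be exploited. Since $f'' \geqslant \kappa > 0$, the derivative $f'$ is strictly increasing on $[a,b]$, so the set
\[
I = \{x \in [a,b] : |f'(x)| \leqslant \sqrt{\kappa}\}
\]
is a (possibly empty) subinterval. The mean value theorem applied to $f'$ gives $|I| \leqslant 2\sqrt{\kappa}/\kappa = 2/\sqrt{\kappa}$.

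Writing $[a,b] = J_- \cup I \cup J_+$ with $f' < -\sqrt{\kappa}$ on $J_-$ and $f' > \sqrt{\kappa}$ on $J_+$, I would bound the three pieces separately. On $I$ the estimate is trivial:
\[
\left|\int_I e^{2\pi i f(x)}\varphi(x)\,dx\right| \leqslant |I|\,\|\varphi\|_\infty \leqslant \frac{2\|\varphi\|_\infty}{\sqrt{\kappa}}.
\]
On $J_+$ the hypotheses of Lemma \ref{Lemma f1} are met (convexity from $f''>0$, and $f' \geqslant \sqrt{\kappa}$), so that lemma gives
\[
\left|\int_{J_+} e^{2\pi i f(x)}\varphi(x)\,dx\right| \leqslant \frac{1}{\sqrt{\kappa}}\bigl[\|\varphi\|_\infty + \|\varphi'\|_{L^1(J_+)}\bigr].
\]
On $J_-$ the derivative is negative, so Lemma \ref{Lemma f1} does not apply directly; I would reduce to it by the reflection $y=-x$: setting $g(y) = f(-y)$ and $\tilde\varphi(y) = \varphi(-y)$, the function $g$ is convex with $g' > \sqrt{\kappa}$ on $-J_-$, and the change of variable gives $\int_{J_-} e^{2\pi i f(x)}\varphi(x)\,dx = \int_{-J_-} e^{2\pi i g(y)}\tilde\varphi(y)\,dy$, to which Lemma \ref{Lemma f1} applies with the same bound $(1/\sqrt{\kappa})[\|\varphi\|_\infty + \|\varphi'\|_{L^1(J_-)}]$. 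Summing the three estimates, and using $\|\varphi'\|_{L^1(J_-)} + \|\varphi'\|_{L^1(J_+)} \leqslant \|\varphi'\|_1$, produces the stated inequality.

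The essential feature of the argument---and what dictates the threshold $\sqrt{\kappa}$---is the balance between two competing estimates: enlarging the excised interval $I$ worsens the trivial bound $|I|\,\|\varphi\|_\infty$ but improves the Lemma \ref{Lemma f1} bound $1/\min|f'|$ on the complementary pieces. A general cutoff $|f'| \leqslant \mu$ produces $2\mu/\kappa$ on the centre and $1/\mu$ on the wings, so $\mu = \sqrt{\kappa}$ is optimal and yields the characteristic $\kappa^{-1/2}$ rate. There is no serious obstacle beyond bookkeeping the constants; the only subtlety is the reflection trick needed to handle $J_-$, since the hypothesis of Lemma \ref{Lemma f1} is stated for a \emph{positive} derivative.
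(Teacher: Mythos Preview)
Your argument is correct and is essentially the paper's own proof: the same splitting at the threshold $|f'|=\sqrt{\kappa}$, the same mean-value bound $|I|\leqslant 2/\sqrt{\kappa}$, and the same appeal to Lemma~\ref{Lemma f1} on the complementary pieces. The paper is terser and simply invokes ``the previous lemma'' on the set $\{|f'|>\sqrt{\kappa}\}$ without spelling out the reflection needed where $f'<-\sqrt{\kappa}$; your explicit treatment of that point is a welcome clarification rather than a departure.
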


\begin{proof}
Let%
\[
I_{1}=\left\{  x\in\left[  a,b\right]  :\left\vert f^{\prime}\left(  x\right)
\right\vert \leqslant\sqrt{\kappa}\right\}
\]
and%
\[
I_{2}=\left\{  x\in\left[  a,b\right]  :\left\vert f^{\prime}\left(  x\right)
\right\vert >\sqrt{\kappa}\right\}  \ .
\]
The convexity of $f\left(  x\right)  $ implies that $I_{1}$ is either an
interval or the empty set. $I_{2}$ is the union of at most two intervals. Let
$I_{1}=\left[  \alpha,\beta\right]  $. Then the mean value theorem yields%
\[
\left(  \beta-\alpha\right)  \kappa\leqslant f^{\prime}\left(  \beta\right)
-f^{\prime}\left(  \alpha\right)  \leqslant2\sqrt{\kappa}\ .
\]
Hence%
\[
\int_{\alpha}^{\beta}e^{2\pi if\left(  x\right)  }\varphi\left(  x\right)
dt\leqslant\left(  \beta-\alpha\right)  \left\Vert \varphi\right\Vert
_{\infty}\leqslant\frac{2\left\Vert \varphi\right\Vert _{\infty}}{\sqrt
{\kappa}}\ .
\]
To end the proof we observe that the previous lemma yields%
\[
\left\vert \int_{I_{2}}e^{2\pi if\left(  x\right)  }\varphi\left(  x\right)
dt\right\vert \leqslant\frac{2}{\sqrt{\kappa}}\left\Vert \varphi\right\Vert
_{\infty}+\frac{2}{\sqrt{\kappa}}\left\Vert \varphi^{\prime}\right\Vert
_{1}\ .
\]

\end{proof}

\begin{lemma}
\label{Lemma J}Let $\epsilon\in C^{1}\left(  \mathbb{R}\right)  $ such that
$\epsilon\left(  x\right)  \equiv0$ for $\left\vert x\right\vert <\frac{1}{2}$
and $\left\vert x\right\vert \geqslant1$. Then
\[
\left\vert \int_{-\infty}^{+\infty}e^{-2\pi i\left(  au+b\left\vert
u\right\vert ^{\gamma}\right)  }\epsilon\left(  u\right)  \ du\right\vert
\leqslant\frac{c}{\left(  1+\left\vert \left(  a,b\right)  \right\vert
\right)  ^{1/2}}%
\]
(where $c$ is independent of $a,b$, but depends on $\left\Vert \epsilon
\right\Vert _{\infty}$ and $\left\Vert \epsilon^{\prime}\right\Vert _{\infty}$).
\end{lemma}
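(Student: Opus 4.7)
My strategy is to reduce to a one-sided integral and then apply an oscillatory integral estimate tailored to whichever of $|a|$ or $|b|$ dominates. Since $\epsilon$ is supported on $[1/2,1]\cup[-1,-1/2]$, I split $\epsilon=\epsilon_++\epsilon_-$ accordingly; the substitution $u\mapsto-u$ converts the $\epsilon_-$-piece into an integral over $[1/2,1]$ with $a$ replaced by $-a$ and $b$ unchanged, preserving $|(a,b)|$. It therefore suffices to bound
\[
I=\int_{1/2}^{1}e^{-2\pi i\phi(u)}\eta(u)\,du,\qquad \phi(u)=au+bu^{\gamma},
\]
for a $C^{1}$ function $\eta$ supported in $[1/2,1]$, with implicit constants depending only on $\|\eta\|_{\infty}$ and $\|\eta'\|_{\infty}$. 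The hypothesis $\epsilon\in C^{1}(\mathbb{R})$ combined with vanishing outside $[1/2,1]\cup[-1,-1/2]$ forces $\eta(1/2)=\eta(1)=0$, which will make boundary terms vanish below.

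On $[1/2,1]$ one has $\phi'(u)=a+\gamma b u^{\gamma-1}$ and $\phi''(u)=\gamma(\gamma-1)b u^{\gamma-2}$, so $|\phi''(u)|\asymp|b|$ with constants depending only on $\gamma$. The trivial bound $|I|\leq\|\eta\|_{\infty}/2$ already handles $|(a,b)|\leq 1$, so I assume $|(a,b)|>1$ and split according to whether $|b|\geq|a|/(2\gamma)$ (the \emph{second-derivative regime}, where $|b|\asymp|(a,b)|$) or $|a|>2\gamma|b|$ (the \emph{first-derivative regime}, where $|a|\asymp|(a,b)|$).

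In the second-derivative regime I choose the sign $\sigma\in\{\pm 1\}$ so that $\sigma\phi''(u)\geq\kappa$ on $[1/2,1]$ with $\kappa=\gamma(\gamma-1)\,2^{2-\gamma}|b|$; since replacing $\phi$ by $-\phi$ does not change $|I|$, Lemma \ref{Lemma f2} applied to $\sigma\phi$ yields $|I|\leq c/\sqrt{|b|}\leq c'/(1+|(a,b)|)^{1/2}$. In the first-derivative regime, $|\phi'(u)|\geq|a|-\gamma|b|\geq|a|/2$ on $[1/2,1]$ and $\phi''$ has constant sign, so $\phi'$ is monotone and nonvanishing. I then do a single integration by parts; the boundary term vanishes since $\eta(1/2)=\eta(1)=0$, and what remains is
\[
|I|\leq\frac{1}{2\pi}\int_{1/2}^{1}\left(\frac{|\eta'(u)|}{|\phi'(u)|}+\frac{|\eta(u)||\phi''(u)|}{|\phi'(u)|^{2}}\right)du\leq\frac{c}{|a|}+\frac{c|b|}{|a|^{2}}\leq\frac{c'}{|a|},
\]
using $|b|/|a|\leq 1/(2\gamma)$. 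Since $|a|\asymp|(a,b)|>1$, this is even better than the required $(1+|(a,b)|)^{-1/2}$.

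The one technicality to flag is that Lemma \ref{Lemma f1} as stated requires $f$ convex with $f'\geq\lambda>0$, whereas in the first-derivative regime the signs of $\phi'$ and $\phi''$ depend on those of $a$ and $b$. Rather than chase sign cases to fit Lemma \ref{Lemma f1}, I redo the single integration by parts inline, as above: only monotonicity and non-vanishing of $\phi'$ are needed, both of which follow from $\phi''$ having constant sign. This is the main place where one could stumble, but it is not a real obstacle.
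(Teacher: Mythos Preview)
Your proof is correct and follows essentially the same approach as the paper: reduce to the positive half-line, handle $|(a,b)|\leq 1$ trivially, and split the remaining case according to whether $\gamma|b|$ or $|a|/2$ is larger, applying the second-derivative bound (Lemma~\ref{Lemma f2}) in one regime and a first-derivative estimate in the other. The only notable difference is that in the first-derivative regime the paper simply invokes Lemma~\ref{Lemma f1} without discussing signs, whereas you correctly observe that the convexity and positivity hypotheses of that lemma need not hold as stated and instead perform the single integration by parts directly, picking up the harmless extra term $c|b|/|a|^{2}$; this is a minor gain in rigor rather than a different method.
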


\begin{proof}
It is enough to consider the integral on $\left(  0,+\infty\right)  $. Let
$f\left(  u\right)  =au+bu^{\gamma}$ and let%
\[
J\left(  a,b\right)  =\int_{0}^{+\infty}e^{-2\pi if\left(  u\right)  }%
\epsilon\left(  u\right)  \ du\ .
\]
If $\left\vert \left(  a,b\right)  \right\vert \leqslant1$ we have the trivial
estimate%
\[
\left\vert J\left(  a,b\right)  \right\vert \leqslant\int_{1/2}^{1}\left\vert
\epsilon\left(  u\right)  \right\vert \ du\leqslant\frac{1}{2}\left\Vert
\epsilon\right\Vert _{\infty}\ .
\]
Assume $\left\vert \left(  a,b\right)  \right\vert >1$ and $\gamma\left\vert
b\right\vert \leqslant\frac{1}{2}\left\vert a\right\vert $. Then%
\[
\left\vert f^{\prime}\left(  u\right)  \right\vert =\left\vert a+b\gamma
u^{\gamma-1}\right\vert \geqslant\left\vert a\right\vert -\gamma\left\vert
b\right\vert \geqslant\frac{1}{2}\left\vert a\right\vert
\]
so that, by Lemma \ref{Lemma f1},%
\[
\left\vert J\left(  a,b\right)  \right\vert \leqslant2\frac{\left\Vert
\epsilon\right\Vert _{\infty}+\left\Vert \epsilon^{\prime}\right\Vert
_{\infty}}{\left\vert a\right\vert }\leqslant\frac{c_{2}}{\left\vert \left(
a,b\right)  \right\vert }\leqslant\frac{c_{2}}{\left\vert \left(  a,b\right)
\right\vert ^{1/2}}\ .
\]
Finally if $\gamma\left\vert b\right\vert >\frac{1}{2}\left\vert a\right\vert
$ then%
\[
\left\vert f^{\prime\prime}\left(  u\right)  \right\vert =\left\vert
b\gamma\left(  \gamma-1\right)  u^{\gamma-2}\right\vert \geqslant
c_{3}\left\vert b\right\vert
\]
so that by Lemma \ref{Lemma f2}%
\[
\left\vert J\left(  a,b\right)  \right\vert \leqslant c_{4}\frac{\left\Vert
\epsilon\right\Vert _{\infty}+\left\Vert \epsilon^{\prime}\right\Vert
_{\infty}}{\left\vert b\right\vert ^{1/2}}\leqslant\frac{c_{5}}{\left\vert
\left(  a,b\right)  \right\vert ^{1/2}}\ .
\]

\end{proof}

\begin{proof}
[Second proof of Theorem \ref{decadCgamma}]For $t\in\mathbb{R}^{2}$ let
$\eta\left(  t\right)  $ be a smooth function supported in a disc $U$ centred
at the origin and such that $\eta\left(  t\right)  =1$ for each $t\in\frac
{1}{2}U$. Observe that for $U$ small enough
\[
\partial C_{\gamma}\cap U=\left\{  \left(  t_{1},t_{2}\right)  \in
\mathbb{R}^{2}:t_{2}=\left\vert t_{1}\right\vert ^{\gamma}\right\}  \cap U\ .
\]
For $t,\xi\in\mathbb{R}^{2}$, let%
\[
\omega\left(  t\right)  =\frac{e^{-2\pi it\cdot\xi}}{-2\pi i\left\vert
\xi\right\vert ^{2}}\xi\ ,
\]
so that%
\[
\mathrm{\mathrm{div}}\omega\left(  t\right)  =\frac{\partial\omega_{1}%
}{\partial t_{1}}+\frac{\partial\omega_{2}}{\partial t_{2}}=e^{-2\pi
it\cdot\xi}\ .
\]
Let us write $\xi=\rho\Theta$ in polar coordinates and for every point
$t\in\partial C_{\gamma}$ let $\nu\left(  t\right)  $ be the outward unit
normal. Then application of the divergence theorem yields%
\begin{align}
\widehat{\chi}_{C_{\gamma}}\left(  \xi\right)   &  =\int_{C_{\gamma}}e^{-2\pi
i\xi\cdot t}\ dt\label{divergenza}\\
&  =\int_{C_{\gamma}}\mathrm{\mathrm{div}}\omega\left(  t\right)
\ dt\nonumber\\
&  =\frac{-1}{2\pi i\rho}\int_{\partial C_{\gamma}}e^{-2\pi i\rho\Theta\cdot
t}\Theta\cdot\nu\left(  t\right)  \ d\mu_{\gamma}\left(  t\right) \nonumber\\
&  =\frac{-1}{2\pi i\rho}\int_{\partial C_{\gamma}}e^{-2\pi i\rho\Theta\cdot
t}\Theta\cdot\nu\left(  t\right)  \eta\left(  t\right)  \ d\mu_{\gamma}\left(
t\right) \nonumber\\
&  -\frac{1}{2\pi i\rho}\int_{\partial C_{\gamma}}e^{-2\pi i\rho\Theta\cdot
t}\Theta\cdot\nu\left(  t\right)  \left(  1-\eta\left(  t\right)  \right)
\ d\mu_{\gamma}\left(  t\right) \nonumber\\
&  =:\frac{-1}{2\pi i\rho}H_{1}\left(  \xi\right)  -\frac{1}{2\pi i\rho}%
H_{2}\left(  \xi\right)  \ .\nonumber
\end{align}
where $\mu_{\gamma}$ is the arc-length measure on $\partial C_{\gamma}$.

We first estimate $H_{2}\left(  \xi\right)  $. Let%
\[
s\mapsto\Gamma\left(  s\right)
\]
be the parametrization of $\partial C_{\gamma}$ by its arc-length. Then
\[
H_{2}\left(  \xi\right)  =\int_{a}^{b}e^{-2\pi i\rho\Theta\cdot\Gamma\left(
s\right)  }\Theta\cdot\nu\left(  \Gamma\left(  s\right)  \right)  \left(
1-\eta\left(  \Gamma\left(  s\right)  \right)  \right)  ds\ .
\]
Since $\Gamma^{\prime}\left(  s\right)  $ and $\Gamma^{\prime\prime}\left(
s\right)  $ are orthogonal vectors with norms $\geqslant c_{1}>0$ then either%
\[
\left\vert \frac{d}{ds}\left(  \rho\Theta\cdot\Gamma\left(  s\right)  \right)
\right\vert \geqslant c_{2}\rho
\]
or%
\[
\left\vert \frac{d^{2}}{ds^{2}}\left(  \rho\Theta\cdot\Gamma\left(  s\right)
\right)  \right\vert \geqslant c_{2}\rho\ .
\]
Therefore we can split the integral in $H_{2}\left(  \xi\right)  $ as the sum of a 
finite number of integrals that satisfy either the assumption of Lemma
\ref{Lemma f1} or Lemma \ref{Lemma f2}. Hence%
\[
\left\vert H_{2}\left(  \xi\right)  \right\vert \leqslant c_{2}\rho^{-1/2}.
\]

Let us consider the integral $H_{1}\left(  \xi\right)  $. By our assumption on
the support of $\eta\left(  t\right)  $ we can write%
\[
H_{1}\left(  \xi\right)  =\int_{\mathbb{R}}e^{-2\pi i\left(  \xi_{1}x+\xi
_{2}\left\vert x\right\vert ^{\gamma}\right)  }\delta\left(  x\right)
\tau\left(  x\right)  \ dx\ ,
\]
where $\tau\left(  x\right)  $ is compactly supported and takes value $1$ in a
neighborhood of $0\in\mathbb{R}$ (say $\tau\left(  x\right)  =1$ when
$\left\vert x\right\vert \leqslant1/2$ and $\tau\left(  x\right)  =0$ when
$\left\vert x\right\vert >1$) and $\delta\left(  x\right)  $ is a $C^{2}$
function (recall that $\gamma>2$).

Assume first $\left\vert \xi_{1}\right\vert >\left\vert \xi_{2}\right\vert $.
Since%
\[
\left\vert \frac{d}{dx}\left(  \xi_{1}x+\xi_{2}\left\vert x\right\vert
^{\gamma}\right)  \right\vert =\left\vert \xi_{1}+\gamma\xi_{2}\left\vert
x\right\vert ^{\gamma-1}\mathrm{sign}\left(  x\right)  \right\vert
\approx\left\vert \xi_{1}\right\vert \approx\rho\ ,
\]
by Lemma \ref{Lemma f1} we have%
\[
\left\vert H_{1}\left(  \xi\right)  \right\vert \leqslant\frac{c}{\rho}\ .
\]

Let now $\left\vert \xi_{1}\right\vert <\left\vert \xi_{2}\right\vert $ and
let $\epsilon\left(  x\right)  =\tau\left(  x\right)  -\tau\left(  2x\right)
$. Observe that $\epsilon\left(  x\right)  $ is positive and supported in the
interval $\left(  -1,-1/4\right)  \cup\left(  1/4,1\right)  $. The key step in
the proof is a dyadic decomposition with the change of variables%
\begin{align*}
&  \int_{\mathbb{R}}e^{-2\pi i\left(  \xi_{1}x+\xi_{2}\left\vert x\right\vert
^{\gamma}\right)  }\delta\left(  x\right)  \tau\left(  x\right)
\ dx=\sum_{j=1}^{+\infty}\int_{\mathbb{R}}e^{-2\pi i\left(  \xi_{1}x+\xi
_{2}\left\vert x\right\vert ^{\gamma}\right)  }\delta\left(  x\right)
\epsilon\left(  2^{j}x\right)  \ dx\\
&  =\sum_{j=1}^{+\infty}2^{-j}\int_{\mathbb{R}}e^{-2\pi i\left(  \left(
\xi_{1}2^{-j}\right)  u+\left(  \xi_{2}2^{-\gamma j}\right)  \left\vert
u\right\vert ^{\gamma}\right)  }\delta\left(  2^{-j}u\right)  \epsilon\left(
u\right)  \ du\ .
\end{align*}

By Lemma \ref{Lemma J} we have%
\[
\left\vert \int_{\mathbb{R}}e^{-2\pi i\left(  \left(  \xi_{1}2^{-j}\right)
u+\left(  \xi_{2}2^{-\gamma j}\right)  u^{\gamma}\right)  }\delta\left(
2^{-j}u\right)  \epsilon\left(  u\right)  \ du\right\vert \leqslant c\left[
1+\left\vert \left(  \xi_{1}2^{-j},\xi_{2}2^{-\gamma j}\right)  \right\vert
\right]  ^{-1/2}\ .
\]
Hence%
\[
\left\vert \int_{\mathbb{R}}e^{-2\pi i\left(  \xi_{1}x+\xi_{2}x^{\gamma
}\right)  }\tau\left(  x\right)  \ dx\right\vert \leqslant c\sum
_{j=1}^{+\infty}2^{-j}\left[  1+\left\vert \left(  \xi_{1}2^{-j},\xi
_{2}2^{-\gamma j}\right)  \right\vert \right]  ^{-1/2}\ .
\]
We recall that we are assuming $\left\vert \xi_{2}\right\vert >\left\vert
\xi_{1}\right\vert $, i.e. we are considering only the directions close to be
perpendicular to the part of $\partial C_{\gamma}$ about the origin. Then%
\begin{align}
&  \sum_{j=1}^{+\infty}2^{-j}\left[  1+\left\vert \left(  \xi_{1}2^{-j}%
,\xi_{2}2^{-\gamma j}\right)  \right\vert \right]  ^{-1/2}\label{diadic}\\
&  \leqslant\sum_{2^{j}\leqslant\left(  \left\vert \xi_{2}\right\vert
/\left\vert \xi_{1}\right\vert \right)  ^{1/\left(  \gamma-1\right)  }}%
2^{-j}\left\vert \left(  \xi_{1}2^{-j},\xi_{2}2^{-\gamma j}\right)
\right\vert ^{-1/2}\nonumber\\
&  +\sum_{2^{j}>\left(  \left\vert \xi_{2}\right\vert /\left\vert \xi
_{1}\right\vert \right)  ^{1/\left(  \gamma-1\right)  }}2^{-j}\left\vert
\left(  \xi_{1}2^{-j},\xi_{2}2^{-\gamma j}\right)  \right\vert ^{-1/2}%
\nonumber\\
&  \leqslant c\sum_{2^{j}\leqslant\left(  \left\vert \xi_{2}\right\vert
/\left\vert \xi_{1}\right\vert \right)  ^{1/\left(  \gamma-1\right)  }%
}2^{j\left(  \gamma/2-1\right)  }\left\vert \xi_{2}\right\vert ^{-1/2}%
+c\sum_{2^{j}>\left(  \left\vert \xi_{2}\right\vert /\left\vert \xi
_{1}\right\vert \right)  ^{1/\left(  \gamma-1\right)  }}2^{-j/2}\left\vert
\xi_{1}\right\vert ^{-1/2}\nonumber\\
&  \leqslant c\left\vert \xi_{2}\right\vert ^{-1/2}\left(  \frac{\left\vert
\xi_{2}\right\vert }{\left\vert \xi_{1}\right\vert }\right)  ^{\left(
\gamma-2\right)  /\left(  2\gamma-2\right)  }+\left\vert \xi_{1}\right\vert
^{-1/2}\left(  \frac{\left\vert \xi_{1}\right\vert }{\left\vert \xi
_{2}\right\vert }\right)  ^{1/\left(  2\gamma-2\right)  }\nonumber\\
&  \leqslant c\left\vert \xi_{2}\right\vert ^{-1/2}\left(  \frac{\left\vert
\xi_{2}\right\vert }{\left\vert \xi_{1}\right\vert }\right)  ^{\left(
\gamma-2\right)  /\left(  2\gamma-2\right)  }\nonumber   \\ &\approx\rho^{-1/2}\psi^{-\left(
\gamma-2\right)  /\left(  2\gamma-2\right)  }\ ,\nonumber
\end{align}
where $\psi=\pi/2+\arctan\left(  \left\vert \xi_{2}\right\vert /\left\vert
\xi_{1}\right\vert \right)  $. Hence
\begin{equation}
\left\vert \widehat{\chi}_{C_{\gamma}}\left(  \xi\right)  \right\vert
\leqslant c\rho^{-3/2}\psi^{\left(  2-\gamma\right)  /\left(  2\gamma
-2\right)  }\ .\label{Stima3}%
\end{equation}
Finally we prove the inequality%
\[
\left\vert \widehat{\chi}_{C_{\gamma}}\left(  \rho\Theta\right)  \right\vert
\leqslant c\rho^{-1-1/\gamma}\ .
\]
Observe that (\ref{Stima3}) yields the above upper bound when $\psi\geqslant
c\rho^{-1+1/\gamma}$. We still have to prove that the same bound is correct
when $0\leqslant\psi\leqslant c\rho^{-1+1/\gamma}$, that is $\left\vert
\xi_{1}\right\vert /\left\vert \xi_{2}\right\vert \leqslant c\rho
^{-1+1/\gamma}$. Finally we deal with the first inequality. We can assume
$\left\vert \xi_{1}\right\vert <c\left\vert \xi_{2}\right\vert $. By the
previous computation we have to bound%
\begin{align*}
&  \sum_{j=1}^{+\infty}2^{-j}\left[  1+\left\vert \left(  \xi_{1}2^{-j}%
,\xi_{2}2^{-\gamma j}\right)  \right\vert \right]  ^{-1/2}\\
&  \leqslant c\sum_{j=1}^{+\infty}2^{-j}\left(  1+\left\vert \xi
_{2}\right\vert 2^{-\gamma j}\right)  ^{-1/2}\\
&  \leqslant c\sum_{2^{j}\leqslant\left\vert \xi_{2}\right\vert ^{1/\gamma}%
}2^{-j}\left(  \left\vert \xi_{2}\right\vert 2^{-\gamma j}\right)
^{-1/2}+c\sum_{2^{j}>\left\vert \xi_{2}\right\vert ^{1/\gamma}}2^{-j}\leqslant
c\left\vert \xi_{2}\right\vert ^{-1/\gamma}\ ,
\end{align*}
which yields the first inequality in (\ref{inequalities}).
\end{proof}
\section{Average decay of $\widehat{\chi}_{C_{\gamma}}\left(  \xi\right)  $}

We shall consider both $L^{p}$ average discrepancies when $C_{\gamma}$ is
translated, and $L^{p}$ average discrepancies when $C_{\gamma}$ is translated
and rotated. For the latter problem we shall need estimates for the $L^{p}$
\textit{(spherical) average decay }of $\widehat{\chi}_{C_{\gamma}}\left(
\xi\right)  $, that is%
\[
\left\{  \int_{0}^{2\pi}\left\vert \widehat{\chi}_{C_{\gamma}}\left(
\rho\Theta\right)  \right\vert ^{p}\ d\theta\right\}  ^{1/p}%
\]
(where $\Theta=\left(  \cos  \theta  ,\sin  \theta\right)  
$ and $\rho\geqslant 2$). To illustrate the relevance of these averages we
point out that the above estimate (\ref{L uno discrepanza}) for the
discrepancy of a polygon $P$ is a consequence of the estimate%
\[
\int_{0}^{2\pi}\left\vert \widehat{\chi}_{P}\left(  \rho\Theta\right)
\right\vert \ d\theta\leqslant c\frac{\log^{2}\left(  \rho\right)  }{\rho^{2}%
}\ ,
\]
which in turn follows from Theorem \ref{chord}. We refer the interested reader
to \cite{BCT}, \cite{BGT} and \cite{Tra0} for more details and applications.

In the next theorem (see \cite{BRT}) we obtain estimates for the $L^{p}$
\textit{(spherical) average decay }of $\widehat{\chi}_{C_{\gamma}}\left(
\xi\right) $.
\begin{theorem}
\label{decadmedio}We have%
\begin{align*}
&  \left\{  \int_{0}^{2\pi}\left\vert \widehat{\chi}_{C_{\gamma}}\left(
  \rho\Theta\right)   \right\vert ^{p}\ d\theta\right\}
^{1/p}\\
&  \leqslant\left\{
\begin{array}
[c]{lll}%
c_{p}\rho^{-3/2} &  & \text{for }p<\frac{2\gamma-2}{\gamma-2},\\
c\rho^{-3/2}\log^{\left(  \gamma-2\right)  \left(  2\gamma-2\right)  }\left(
\rho\right)   &  & \text{for }p=\frac{2\gamma-2}{\gamma-2},\\
c_{p}\rho^{-1-\frac{1}{p}-\frac{1}{\gamma}+\frac{1}{\gamma p}} &  & \text{for
}p>\frac{2\gamma-2}{\gamma-2}.
\end{array}
\right.
\end{align*}

\end{theorem}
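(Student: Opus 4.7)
The plan is to bound the integral $\int_0^{2\pi}|\widehat{\chi}_{C_\gamma}(\rho\Theta)|^p\,d\theta$ by splitting the $\theta$-range into three arcs corresponding exactly to the three regimes of the pointwise estimate in Theorem \ref{decadCgamma}. Parametrize directions by the angle $\psi\in(-\pi/2,\pi/2]$ from the outward normal at the flat point (so $\theta=\psi\pm\pi/2$), and note that since $C_\gamma$ has only one flat point the ``bad'' directions are confined to a neighbourhood of $\psi=0$; away from it the curvature is bounded below and we get the clean bound $\rho^{-3/2}$.

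Concretely, denote $I_1=\{|\psi|\le c_1\rho^{-1+1/\gamma}\}$, $I_2=\{c_1\rho^{-1+1/\gamma}\le|\psi|\le\varepsilon\}$, $I_3=\{\varepsilon\le|\psi|\le\pi/2\}$. Region $I_3$ contributes $\le c\rho^{-3p/2}$. Region $I_1$ has measure $\approx\rho^{-1+1/\gamma}$, so contributes $\le c\rho^{-(1+1/\gamma)p}\cdot\rho^{-1+1/\gamma}$. The main term is $I_2$:
\begin{equation*}
\int_{c_1\rho^{-1+1/\gamma}}^{\varepsilon} \rho^{-3p/2}\,\psi^{p(2-\gamma)/(2\gamma-2)}\,d\psi.
\end{equation*}
Writing $\alpha=-p(\gamma-2)/(2\gamma-2)$, the convergence of $\int_0^\varepsilon \psi^\alpha\,d\psi$ is governed by whether $\alpha>-1$, $\alpha=-1$, or $\alpha<-1$, which corresponds exactly to $p\lessgtr (2\gamma-2)/(\gamma-2)$. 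This gives the three cases of the theorem.

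For $p<(2\gamma-2)/(\gamma-2)$ the integral over $I_2$ is $O(\rho^{-3p/2})$ and dominates; a short arithmetic check (using $(\gamma-1)/p\ge(\gamma-2)/2$) shows the $I_1$ term is also $O(\rho^{-3p/2})$, yielding the $\rho^{-3/2}$ bound. At the critical exponent $p=(2\gamma-2)/(\gamma-2)$ the integral over $I_2$ produces $\int_{c_1\rho^{-1+1/\gamma}}^\varepsilon \psi^{-1}\,d\psi\approx\log\rho$, which after taking $p$-th root yields the factor $\log^{(\gamma-2)/(2\gamma-2)}(\rho)$ (matching $1/p=(\gamma-2)/(2\gamma-2)$). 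For $p>(2\gamma-2)/(\gamma-2)$ the lower endpoint dominates, giving $\rho^{-3p/2}\cdot\rho^{(-1+1/\gamma)(\alpha+1)}$; simplifying the exponent yields $\rho^{-p-p/\gamma-1+1/\gamma}$, which after $p$-th root is $\rho^{-1-1/p-1/\gamma+1/(\gamma p)}$, and one verifies that the $I_1$ contribution is of the same order, confirming the stated bound.

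The computation is essentially routine once the three-regime pointwise estimate is in hand; the only subtlety is bookkeeping the exponents and checking that at each of the two breakpoints $p=(2\gamma-2)/(\gamma-2)$ the two candidate contributions from $I_1$ and $I_2$ are of the same order, so the split into regimes produces the right answer with the correct logarithmic factor at the critical index.
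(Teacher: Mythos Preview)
Your proposal is correct and follows essentially the same approach as the paper: both split the angular integral into the three arcs dictated by the pointwise estimates of Theorem~\ref{decadCgamma}, integrate each piece, and distinguish the three cases according to whether the exponent $p(2-\gamma)/(2\gamma-2)$ in the $I_2$ integral is greater than, equal to, or less than $-1$. Your bookkeeping of the $I_1$ contribution and the verification that it matches the $I_2$ contribution at the critical index are exactly what the paper does (the paper records $A\leqslant c\rho^{-1-1/p-1/\gamma+1/(\gamma p)}$ directly, which is the same computation).
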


\begin{proof}
It is enough to integrate between $-\pi/2$ and $\pi/2$. The estimates in
Theorem \ref{decadCgamma} yield%
\begin{align*}
&  \left\{  \int_{-\pi/2}^{\pi/2}\left\vert \widehat{\chi}_{C_{\gamma}}\left(
  \rho\Theta\right)    \right\vert ^{p}\ d\theta\right\}
^{1/p}\\
&  \leqslant\left\{  \int_{-\pi/2}^{-\pi/2+c\rho^{-1+1/\gamma}}\left\vert
\widehat{\chi}_{C_{\gamma}}\left(    \rho\Theta \right)
\right\vert ^{p}\ d\theta\right\}  ^{1/p}\\
&  +\left\{  \int_{-\pi/2+c\rho^{-1+1/\gamma}}^{-\pi/2+\varepsilon}\left\vert
\widehat{\chi}_{C_{\gamma}}\left(   \rho\Theta\right)  
\right\vert ^{p}\ d\theta\right\}  ^{1/p}\\
&  +\left\{  \int_{-\pi/2+\varepsilon}^{\pi/2}\left\vert \widehat{\chi
}_{C_{\gamma}}\left(  \rho\Theta\right)   \right\vert
^{p}\ d\theta\right\}  ^{1/p}\\
&  \leqslant c\left\{  \int_{0}^{c\rho^{-1+1/\gamma}}\left\vert \rho
^{-1-1/\gamma}\right\vert ^{p}\ d\theta\right\}  ^{1/p}\\
&  +c\left\{  \int_{c\rho^{-1+1/\gamma}}^{\varepsilon}\left\vert \rho
^{-3/2}\psi^{\left(  2-\gamma\right)  /\left(  2\gamma-2\right)  }\right\vert
^{p}\ d\psi\right\}  ^{1/p}\\
&  +c\left\{  \int_{-\pi/2+\varepsilon}^{\pi/2}\left\vert \rho^{-3/2}%
\right\vert ^{p}\ d\theta\right\}  ^{1/p}\\
&  =A+B+C\ ,
\end{align*}
say. Finally we have%
\begin{align*}
A &  \leqslant c\rho^{-1-\frac{1}{p}-\frac{1}{\gamma}+\frac{1}{\gamma p}}\ ,\\
B &  \leqslant\left\{
\begin{array}
[c]{lll}%
\rho^{-3/2} &  & \textit{for }p<\frac{2\gamma-2}{\gamma-2},\\
\rho^{-3/2}\log^{\left(  \gamma-2\right)  /\left(  2\gamma-2\right)  }\left(
\rho\right)   &  & \textit{for }p=\frac{2\gamma-2}{\gamma-2},\\
\rho^{-1-\frac{1}{p}-\frac{1}{\gamma}+\frac{1}{\gamma p}} &  & \textit{for
}p>\frac{2\gamma-2}{\gamma-2},
\end{array}
\right.  \\
C &  \leqslant c\rho^{-3/2}\ .
\end{align*}

\end{proof}

It can be proved that the above estimates are sharp (see \cite{BRT}).

\section{Integer points in $C_{\gamma}$}

We consider two different averages of the discrepancy function.

\subsection{Discrepancy over translations}

We now prove a few $L^p$ estimates for the discrepancy function%
\begin{align*}
\mathcal{D}_{R}\left(  t\right)   &  =\mathcal{D}\left(  RC_{\gamma}+t\right)
=-R^{d}\left\vert C_{\gamma}\right\vert +\mathrm{card}\left(  (RC_{\gamma
}+t)\cap\mathbb{Z}^{2}\right)  \\
&  =-R^{2}\left\vert C_{\gamma}\right\vert +\sum_{n\in\mathbb{Z}^{2}}%
\chi_{RC_{\gamma}}\left(  n-t\right)  \ ,
\end{align*}
which we recall to have Fourier series%
\[
\sum_{0\neq m\in\mathbb{Z}^{2}}\widehat{\chi}_{RC_{\gamma}}\left(  m\right)
e^{2\pi im\cdot t}\ .
\]
We consider the $L^{p}$ norms%
\[
\left\Vert \mathcal{D}_{R}\right\Vert _{p}=\left\{
\begin{array}
[c]{lll}%
\left\{  \int_{\mathbb{T}^{2}}\left\vert \mathcal{D}_{R}\left(  RC_{\gamma
}+t\right)  \right\vert ^{p}dt\right\}  ^{1/p} &  & \textit{for }p<\infty,\\
\\
\sup_{t\in\mathbb{T}^{2}}\left\vert \mathcal{D}_{R}\left(  RC_{\gamma
}+t\right)  \right\vert  &  & \textit{for }p=\infty.
\end{array}
\right.
\]
Our estimates are the following (see \cite{BCGGT}).

\begin{theorem}
\label{minore dpiu1}For $2<\gamma\leqslant3$ we have%
\begin{equation}
\left\Vert \mathcal{D}_{R}\right\Vert _{p}\leqslant\left\{
\begin{array}
[c]{lll}%
cR^{1-1/\gamma} &  & \text{for }1\leqslant p\leqslant4/\left(  3-\gamma
\right)  ,\\
cR^{\frac{2}{3}\left(  1-\frac2{\gamma p}\right)  } &  & \text{for
}p>4/\left(  3-\gamma\right)  .
\end{array}
\right.  \label{colin}%
\end{equation}

\end{theorem}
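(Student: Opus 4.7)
The plan is to work at the critical exponent $p_c = 4/(3-\gamma)$, where both claimed upper bounds coincide at $R^{1-1/\gamma}$: first I would establish the endpoint $L^{p_c}$ bound, then deduce the rest by $L^p$ monotonicity (for $p\leq p_c$) and by H\"older interpolation with an $L^\infty$ bound (for $p>p_c$). The three analytic inputs are (i) an $L^2$ bound via Parseval, (ii) a Sierpi\'nski-type $L^\infty$ bound via Hlawka's smoothing, and (iii) a refined $L^{p_c}$ bound via an axial/non-axial decomposition of the Fourier series.

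For the $L^2$ bound, Parseval gives
\[
\|\mathcal{D}_R\|_2^2 = R^4 \sum_{m\neq 0} |\widehat{\chi}_{C_\gamma}(Rm)|^2,
\]
and I would split the sum by the angular location $\psi$ of $Rm$ as in Theorem \ref{decadCgamma}. The dominant contribution is from the pure-axial integer points $m=(0,m_2)$, where $|\widehat{\chi}_{C_\gamma}(0,Rm_2)|\leq c(R|m_2|)^{-1-1/\gamma}$ and the convergent $1$-dimensional sum $\sum|m_2|^{-2-2/\gamma}$ gives $\lesssim R^{-2-2/\gamma}$; the intermediate and far angular regions use the $(R|m|)^{-3/2}$-type bounds and contribute only $\lesssim R^{-3}$. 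Thus $\|\mathcal{D}_R\|_2\leq cR^{1-1/\gamma}$. For $L^\infty$, I would apply Hlawka's smoothing: $\|\mathcal{D}_R-\mathcal{D}_R\ast\phi_\eta\|_\infty\leq cR\eta$ (integer points in the $\eta$-boundary layer), while the absolute Fourier sum $\|\mathcal{D}_R\ast\phi_\eta\|_\infty\leq R^2\sum_{0<|m|\leq 1/\eta}|\widehat{\chi}_{C_\gamma}(Rm)|$ splits into an axial part $\lesssim R^{1-1/\gamma}$ (independent of $\eta$, by $1$-dimensional absolute convergence) and a non-axial part $\lesssim R^{1/2}\eta^{-1/2}$. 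Choosing $\eta=R^{-1/3}$ gives $\|\mathcal{D}_R\|_\infty\leq cR^{2/3}$, absorbing the axial piece since $\gamma\leq 3$.

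The crux is the endpoint $\|\mathcal{D}_R\|_{p_c}\leq cR^{1-1/\gamma}$, which does \emph{not} follow from plain H\"older interpolation of the $L^2$ and $L^\infty$ bounds. I would decompose $\mathcal{D}_R=\mathcal{D}_R^{\mathrm{ax}}+\mathcal{D}_R^{\mathrm{nonax}}$, collecting the Fourier coefficients on the $\xi_2$-axis into $\mathcal{D}_R^{\mathrm{ax}}$. The axial part depends only on $t_2$ and its Fourier series is absolutely convergent: $\|\mathcal{D}_R^{\mathrm{ax}}\|_\infty\leq R^2\sum_{m_2\neq 0}(R|m_2|)^{-1-1/\gamma}\leq cR^{1-1/\gamma}$. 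The complement $\mathcal{D}_R^{\mathrm{nonax}}$ has the singular axial peak removed, so Parseval gives $\|\mathcal{D}_R^{\mathrm{nonax}}\|_2\leq cR^{1/2}$ while the triangle inequality gives $\|\mathcal{D}_R^{\mathrm{nonax}}\|_\infty\leq cR^{2/3}$. Invoking the Huxley-type $L^p$ refinement for this disc-like function (Theorem \ref{twoandp}, adapted in \cite{BCGGT}) yields $\|\mathcal{D}_R^{\mathrm{nonax}}\|_{p_c}\leq cR^{(2/3)(1-1/p_c)}=cR^{(1+\gamma)/6}$; an elementary check using $(\gamma-2)(\gamma-3)\leq 0$ for $\gamma\in[2,3]$ shows $(1+\gamma)/6\leq 1-1/\gamma$, as required.

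Finally, for $p\leq p_c$, monotonicity of $L^p$ norms on the probability space $\mathbb{T}^2$ gives $\|\mathcal{D}_R\|_p\leq\|\mathcal{D}_R\|_{p_c}\leq cR^{1-1/\gamma}$; for $p>p_c$, H\"older's inequality yields
\[
\|\mathcal{D}_R\|_p \leq \|\mathcal{D}_R\|_{p_c}^{p_c/p}\|\mathcal{D}_R\|_\infty^{1-p_c/p} \leq cR^{(1-1/\gamma)(p_c/p)+(2/3)(1-p_c/p)},
\]
and substituting $p_c=4/(3-\gamma)$ reduces the exponent to $(2/3)(1-2/(\gamma p))$. The main obstacle is the endpoint $L^{p_c}$ bound, specifically the Huxley-type refinement for $\mathcal{D}_R^{\mathrm{nonax}}$: Parseval and Hlawka alone give a strictly weaker interpolation exponent, and one must exploit the disc-like arithmetic structure of the non-axial Fourier series to recover $(1+\gamma)/6$.
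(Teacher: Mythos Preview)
Your overall architecture---prove the endpoint $p_c=4/(3-\gamma)$, deduce $p\leqslant p_c$ by monotonicity, and interpolate with the $L^\infty$ bound $cR^{2/3}$ for $p>p_c$---is exactly the paper's, and your derivation of $\|\mathcal{D}_R\|_\infty\leqslant cR^{2/3}$ via Hlawka smoothing is essentially what the paper does. The difference, and the gap, is in how the endpoint is obtained.

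For the endpoint the paper does \emph{not} split $\mathcal{D}_R$ into axial and non-axial pieces and then invoke a disc-type $L^p$ theorem on the latter. It applies Hlawka's smoothing \emph{first}, obtaining an absolutely convergent series $D_{\varepsilon,R}(t)=R^2\sum_{m\neq0}\widehat{\varphi}(\varepsilon m)\widehat{\chi}_{C_\gamma}(Rm)e^{2\pi im\cdot t}$ together with $\|\mathcal{D}_R\|_p\leqslant cR\varepsilon+\max_{\pm}\|D_{\varepsilon,R\pm\varepsilon}\|_p$. Then $D_{\varepsilon,R}$ is split into three angular regions according to Theorem~\ref{decadCgamma}: the near-axial region I is handled by the triangle inequality and gives $cR^{1-1/\gamma}+c\log(1/\varepsilon)$, while regions II and III are handled by the \emph{Hausdorff--Young inequality} with dual exponent $q=4/(\gamma+1)<4/3$, giving $cR^{1/2}\varepsilon^{(2-\gamma)/2}$. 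Choosing $\varepsilon=R^{-1/\gamma}$ balances everything to $cR^{1-1/\gamma}$. No arithmetic structure is used; Hausdorff--Young on the smoothed coefficients does all the work.

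Your endpoint argument has a genuine gap at ``invoking the Huxley-type $L^p$ refinement for $\mathcal{D}_R^{\mathrm{nonax}}$''. Theorem~\ref{twoandp} is a statement about the disc; nothing in the paper transfers it to the non-axial piece of $C_\gamma$, and your sketch does not supply that transfer. You correctly note that interpolating your bounds $\|\mathcal{D}_R^{\mathrm{nonax}}\|_2\leqslant cR^{1/2}$ and $\|\mathcal{D}_R^{\mathrm{nonax}}\|_\infty\leqslant cR^{2/3}$ is strictly too weak---it gives exponent $(5+\gamma)/12$, which exceeds $1-1/\gamma$ for every $\gamma\in(2,3)$. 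But the missing ingredient is not ``disc-like arithmetic structure'': it is precisely the Hausdorff--Young estimate on the smoothed non-axial coefficients, which is what the paper's direct computation supplies. In short, you have identified the obstacle but outsourced it to a black box that does not apply; filling it in would amount to the paper's own proof.
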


\begin{theorem}
\label{maggiore dpiu1}For $\gamma>3$ and every $p\geqslant1$ we have%
\[
\left\Vert \mathcal{D}_{R}\right\Vert _{p}\ \leqslant cR^{1-1/\gamma}\ .
\]

\end{theorem}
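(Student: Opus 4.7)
Because $\mathbb{T}^2$ has unit Lebesgue measure, $\|\mathcal{D}_R\|_p\leq\|\mathcal{D}_R\|_\infty$ for every $p\geq 1$, so my plan is to establish the uniform bound $\sup_{t\in\mathbb{T}^2}|\mathcal{D}_R(t)|\leq cR^{1-1/\gamma}$. The approach is Hlawka's smoothing argument, with smoothing scale $\epsilon=R^{-1/\gamma}$; this is the natural scale because the rescaled flat profile $y=R^{1-\gamma}|x|^\gamma$ on $\partial(RC_\gamma)$ rises to height $1$ over horizontal distance $\asymp R^{1-1/\gamma}$, which is the quantity we want for the final estimate.

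Concretely, I would let $\phi$ be a smooth nonnegative bump with $\int\phi=1$, supported in the unit disc, and set $\phi_\epsilon(t)=\epsilon^{-2}\phi(t/\epsilon)$. Let $RC_\gamma^+=RC_\gamma+B(0,\epsilon)$ be the Minkowski $\epsilon$-enlargement and $RC_\gamma^-=\{x:B(x,\epsilon)\subset RC_\gamma\}$ the $\epsilon$-shrinking, so that
\[
\chi_{RC_\gamma^-}*\phi_\epsilon\leq\chi_{RC_\gamma}\leq\chi_{RC_\gamma^+}*\phi_\epsilon.
\]
Since $\partial(RC_\gamma)$ has length $O(R)$, $|RC_\gamma^\pm|-|RC_\gamma|=O(R\epsilon)$; summing over $\mathbb{Z}^2$ and applying Poisson summation gives
\[
\mathcal{D}_R(t)\leq cR\epsilon+\sum_{0\neq m\in\mathbb{Z}^2}|\widehat{\chi}_{RC_\gamma^+}(m)|\,|\widehat{\phi}_\epsilon(m)|,
\]
together with an analogous lower bound. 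The perturbed bodies $RC_\gamma^\pm$ are still in the $C_\gamma$-class locally (their flat points are shifted by $\epsilon$ but retain the profile $|x|^\gamma$ to leading order), so Theorems \ref{decadCgamma} and \ref{decadmedio} apply to them with the same bounds up to constants.

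To control the Fourier sum I combine the pointwise peak bound from Theorem \ref{decadCgamma} with the $L^1$ spherical average from Theorem \ref{decadmedio}. Writing $\widehat{\chi}_{RC_\gamma}(m)=R^2\widehat{\chi}_{C_\gamma}(Rm)$ and using $|\widehat{\phi}_\epsilon(m)|\leq c_N(1+\epsilon|m|)^{-N}$ to truncate effectively at $|m|\lesssim\epsilon^{-1}=R^{1/\gamma}$, I would decompose dyadically into $|m|\asymp 2^j$. The standard lattice-point inequality
\[
\sum_{|m|\asymp 2^j}g(\arg m)\leq c\,2^{2j}\!\int_0^{2\pi}\!g(\theta)\,d\theta+c\,2^j\|g\|_\infty,
\]
applied with $g(\theta)=|\widehat{\chi}_{C_\gamma}(R2^j\Theta)|$ and combined with $\int g\,d\theta\leq c(R2^j)^{-3/2}$ (Theorem \ref{decadmedio} with $p=1$) and $\|g\|_\infty\leq c(R2^j)^{-1-1/\gamma}$ (the first inequality of Theorem \ref{decadCgamma}), yields
\[
R^2\sum_{|m|\asymp 2^j}|\widehat{\chi}_{C_\gamma}(Rm)|\leq cR^{1/2}2^{j/2}+cR^{1-1/\gamma}2^{-j/\gamma}.
\]
Summing over $j$ with the cutoff from $\widehat{\phi}_\epsilon$ then gives
\[
\sum_{0\neq m}|\widehat{\chi}_{RC_\gamma}(m)|\,|\widehat{\phi}_\epsilon(m)|\leq cR^{1/2}\epsilon^{-1/2}+cR^{1-1/\gamma}.
\]

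Substituting $\epsilon=R^{-1/\gamma}$ makes the smoothing error $R\epsilon=R^{1-1/\gamma}$ and turns the first term of the Fourier sum into $R^{1/2}\epsilon^{-1/2}=R^{1/2+1/(2\gamma)}$. The elementary inequality $\tfrac12+\tfrac1{2\gamma}\leq 1-\tfrac1\gamma$ is equivalent to $\gamma\geq 3$, which holds strictly under the hypothesis $\gamma>3$; both contributions are therefore $O(R^{1-1/\gamma})$ and the desired uniform bound $|\mathcal{D}_R(t)|\leq cR^{1-1/\gamma}$ follows. I expect the main technical obstacle to be the lattice-point inequality above, specifically ensuring that the ``peak'' term $c\cdot 2^j\|g\|_\infty$ genuinely captures the (few) lattice points that may land inside the narrow angular peak of width $(R2^j)^{-1+1/\gamma}$ about the flat-point normal. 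The condition $\gamma>3$ enters precisely when balancing the smoothing error against the bulk Fourier contribution, reflecting the fact that the flat point must be flat enough for its intrinsic discrepancy $R^{1-1/\gamma}$ to dominate the generic curvature-driven discrepancy $R^{2/3}$ one would see at $\gamma=3$.
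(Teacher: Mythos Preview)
Your overall strategy coincides with the paper's: reduce to $p=\infty$, apply Hlawka's smoothing, and balance the mollification error $R\varepsilon$ against the tail of the Fourier sum. Two implementation choices differ. First, the paper replaces $RC_\gamma$ by the \emph{dilates} $(R\pm\varepsilon)C_\gamma$ (Lemma~\ref{ColzLemma}) rather than by Minkowski parallel bodies; this is cleaner because $\widehat{\chi}_{(R\pm\varepsilon)C_\gamma}(m)=(R\pm\varepsilon)^2\widehat{\chi}_{C_\gamma}((R\pm\varepsilon)m)$, so Theorem~\ref{decadCgamma} applies verbatim with no need to re-derive bounds for the perturbed boundaries. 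Second, the paper takes $\varepsilon=R^{-1+2/\gamma}$ (which makes $R^{1/2}\varepsilon^{-1/2}=R^{1-1/\gamma}$ exactly, while $R\varepsilon=R^{2/\gamma}$ is smaller); your choice $\varepsilon=R^{-1/\gamma}$ also works since $\tfrac12+\tfrac1{2\gamma}<1-\tfrac1\gamma$ for $\gamma>3$, but it is not the optimized one.

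The genuine gap is your ``standard lattice-point inequality'' $\sum_{|m|\asymp 2^j}g(\arg m)\le c\,2^{2j}\!\int g\,d\theta+c\,2^j\|g\|_\infty$. As written this is false for general $g\ge0$: take $g$ to be the indicator of the (finite) set of arguments of lattice points in the annulus, so $\int g=0$ while the left side is $\asymp2^{2j}\gg 2^j$. What \emph{is} true, and what you actually need, is that for a sector of angular width $w$ the lattice-point count in the annulus is $O(2^{2j}w+2^j)$; combining this with the three-regime pointwise bounds of Theorem~\ref{decadCgamma} gives your claimed estimate $R^2\sum_{|m|\asymp2^j}|\widehat{\chi}_{C_\gamma}(Rm)|\le cR^{1/2}2^{j/2}+cR^{1-1/\gamma}2^{-j/\gamma}$ directly. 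But once you do that you have reproduced precisely the paper's angular decomposition $I+II+III$ (restricted to a dyadic shell), so the detour through spherical $L^1$ averages and a general lattice inequality buys nothing. In short: your outline is correct, but to close it you must abandon the stated inequality and carry out the angular split, which is exactly how the paper proceeds.
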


\begin{remark}
The proof of Theorem \ref{minore dpiu1} follows Hlawka's  smoothing
argument that is usually used when the curvature
of the boundary is strictly positive (that is $\gamma=2$). Anyway it takes no
extra effort to apply it to the case $\gamma\leqslant3$. Roughly speaking here
we have to consider two cases. First, the integer points close to the origin,
where vertical translations yield discrepancy $\leqslant cR^{1-1/\gamma}$.
Second, the integer points away from the origin, where the smoothing argument
yields discrepancy $\leqslant cR^{2/3}$. Therefore $\gamma\leqslant3$ works as
well. The bound $cR^{2/3}$ for $\gamma\leqslant3$ has been first obtained in
\cite{Col}.
\end{remark}

We need the following lemma (see \cite{BGT}).

\begin{lemma}
\label{ColzLemma} Let $\varphi\left(  t\right)  $ be a smooth non-negative
function supported in a small neighbourhood of the origin and such that
$\int_{\mathbb{R}^{2}}\varphi=1$. Then for every small $\varepsilon>0$ and
$R>1$ we have%
\[
\varepsilon^{-2}\varphi\left(  \varepsilon^{-1}\cdot\right)  \ast\chi_{\left(
R-\varepsilon\right)  C_{\gamma}}\left(  t\right)  \leqslant
\chi_{RC_{\gamma}}\left(  t\right)  \leqslant\varepsilon^{-2}%
\varphi\left(  \varepsilon^{-1}\cdot\right)  \ast\chi_{\left(  R+\varepsilon
\right)  C_{\gamma}}\left(  t\right)  \ ,
\]
where $\ast$ denotes the convolution%
\[
\left(  f\ast g\right)  \left(  t\right)  =\int f\left(  t-s\right)  g\left(
s\right)  \ ds \;.
\]
In particular,%
\begin{align}
&  \left\vert C_{\gamma}\right\vert \left(  \left(  R-\varepsilon\right)
^{2}-R^{2}\right)  +D_{\varepsilon,R-\varepsilon}\left(  t\right)
\label{D hlawka}\\
&  \leqslant\mathcal{D}_{R}\left(  t\right)  \leqslant\left\vert C_{\gamma
}\right\vert \left(  \left(  R+\varepsilon\right)  ^{2}-R^{2}\right)
+D_{\varepsilon,R+\varepsilon}\left(  t\right)  \ ,\nonumber
\end{align}
where%
\[
D_{\varepsilon,R}\left(  t\right)  =R^{2}\sum_{0\neq m\mathbf{\in}%
\mathbb{Z}^{2}}\widehat{\varphi}\left(  \varepsilon m\right)  \widehat{\chi
}_{C_{\gamma}}\left(  Rm\right)  e^{2\pi im\cdot t}\ .
\]

\end{lemma}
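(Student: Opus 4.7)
The plan is to prove the pointwise sandwich first and then periodize it over $\mathbb Z^2$, converting the inequalities into (\ref{D hlawka}) by the same Fourier computation as in (\ref{fourierdiscr}). The key geometric fact is the Minkowski identity $(R-\varepsilon)C_\gamma+\varepsilon C_\gamma=RC_\gamma$ and $RC_\gamma+\varepsilon C_\gamma=(R+\varepsilon)C_\gamma$, which hold for any convex body because $aK+bK=(a+b)K$ for $a,b>0$ and $K$ convex.

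To establish the pointwise sandwich I would choose $\varphi$ so that its support lies in a small set $\Omega\subset C_\gamma\cap(-C_\gamma)$; writing $\varphi_\varepsilon:=\varepsilon^{-2}\varphi(\varepsilon^{-1}\cdot)$, the support of $\varphi_\varepsilon$ then sits inside $\varepsilon C_\gamma\cap(-\varepsilon C_\gamma)$. Two direct case checks finish the job. First, if $t\notin RC_\gamma$, then for every $s\in\mathrm{supp}(\varphi_\varepsilon)\subset\varepsilon C_\gamma$ the assumption $t-s\in(R-\varepsilon)C_\gamma$ would give $t=(t-s)+s\in(R-\varepsilon)C_\gamma+\varepsilon C_\gamma=RC_\gamma$, a contradiction; hence $\varphi_\varepsilon\ast\chi_{(R-\varepsilon)C_\gamma}(t)=0$. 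Second, if $t\in RC_\gamma$, then for every $s\in\mathrm{supp}(\varphi_\varepsilon)$ one has $-s\in\varepsilon C_\gamma$ and therefore $t-s=t+(-s)\in RC_\gamma+\varepsilon C_\gamma=(R+\varepsilon)C_\gamma$, so the integrand in the right-hand convolution is identically $1$ on $\mathrm{supp}(\varphi_\varepsilon)$ and $\varphi_\varepsilon\ast\chi_{(R+\varepsilon)C_\gamma}(t)=\int\varphi_\varepsilon=1$. Combining these two observations with the trivial bound $\varphi_\varepsilon\ast\chi_A\leq\int\varphi_\varepsilon=1$ valid everywhere yields both pointwise inequalities in the statement.

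Summing the pointwise inequalities over $n\in\mathbb Z^2$ with $t$ replaced by $n-t$ produces (\ref{D hlawka}): the middle term is $R^2|C_\gamma|+\mathcal D_R(t)$ by the definition of discrepancy, while for the outer terms I repeat the Fourier computation of (\ref{fourierdiscr}). The resulting periodic Fourier series is absolutely convergent because $\widehat\varphi$ is rapidly decreasing and $\widehat\chi_{C_\gamma}$ satisfies at least the decay (\ref{decay 1}). Its $m=0$ coefficient equals $(R\pm\varepsilon)^2|C_\gamma|$ since $\widehat\varphi(0)=\int\varphi=1$, while the remaining coefficients are $\widehat\varphi(\varepsilon m)\,(R\pm\varepsilon)^2\widehat\chi_{C_\gamma}((R\pm\varepsilon)m)$, reassembling into $D_{\varepsilon,R\pm\varepsilon}(t)$. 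Subtracting $R^2|C_\gamma|$ across the sandwich gives (\ref{D hlawka}). The one delicate point is that for $C_\gamma$ as defined the flat point sits at the origin, so $C_\gamma\cap(-C_\gamma)$ degenerates there; this is circumvented by a harmless translation of $C_\gamma$ that puts the origin in its interior, which affects $\widehat\chi_{C_\gamma}$ only by a unimodular phase and does not alter (\ref{D hlawka}) once the argument $t$ is relabelled accordingly.
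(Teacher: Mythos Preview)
Your proof is correct and follows essentially the same route as the paper: both rest on the Minkowski identity $aC_\gamma+bC_\gamma=(a+b)C_\gamma$ and the fact that $\mathrm{supp}\,\varphi_\varepsilon$ sits inside a set of the form $\pm\varepsilon C_\gamma$ (the paper phrases this via a small ball $B(0,\varepsilon)\subset\varepsilon C_\gamma$, you via $\varepsilon(C_\gamma\cap(-C_\gamma))$). Your direct two-case check is if anything tidier than the paper's boundary-ball argument, and you are right to flag the translation issue---the origin lies on $\partial C_\gamma$, so one must first shift $C_\gamma$ to contain a neighbourhood of $0$; the paper silently assumes this when writing $RC_\gamma+B(0,\varepsilon)\subseteq(R+\varepsilon)C_\gamma$.
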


\begin{proof}
First we observe that the convexity of $C_{\gamma}$ yields%
\[
\frac{R}{R+\varepsilon}C_{\gamma}+\frac{\varepsilon}{R+\varepsilon}C_{\gamma
}\subseteq C_{\gamma}%
\]
so that%
\begin{equation}
\left(  R+\varepsilon\right)  C_{\gamma}\supseteq RC_{\gamma}+\varepsilon
C_{\gamma}\supseteq RC_{\gamma}+B\left(  0,\varepsilon\right)  \label{above}%
\end{equation}
and therefore $B\left(  q,\varepsilon\right)  \subseteq\left(  R+\varepsilon
\right)  C_{\gamma}$ for every $q\in\partial\left(  RC_{\gamma}\right)  $.
Applying (\ref{above}) to $\mathrm{Interior}\left(  C_{\gamma}\right)  $ with
$R$ in place of $R+\varepsilon$ we obtain%
\[
\mathrm{Interior}\left(  RC_{\gamma}\right)  \supseteq\mathrm{Interior}\left(
R-\varepsilon\right)  C_{\gamma}+B\left(  0,\varepsilon\right)  \ .
\]
Assume there exists $y\in B\left(  q,\varepsilon\right)  \cap\mathrm{Interior}%
\left(  R-\varepsilon\right)  C_{\gamma}$. It follows that
\[
q\in\mathrm{Interior}\left(  R-\varepsilon\right)  C_{\gamma}+B\left(
0,\varepsilon\right)  \subseteq\mathrm{Interior}\left(  RC_{\gamma}\right)
\]
so that $q\notin\partial\left(  RC_{\gamma}\right)  $. Hence for large $R$ and
small $\varepsilon$ we have%
\[
B\left(  q,\varepsilon\right)  \subseteq\left(  R+\varepsilon\right)
C_{\gamma}\setminus\mathrm{Interior}\left(  R-\varepsilon\right)  C_{\gamma}%
\]
for every $q\in\partial\left(  RC_{\gamma}\right)  $. Then%
\[
\varepsilon^{-2}\varphi\left(  \varepsilon^{-1}\cdot\right)  \ast\chi_{\left(
R-\varepsilon\right)  B}\left(  t\right)  \leqslant\chi_{RB}\left(  t\right)
\leqslant\varepsilon^{-2}\varphi\left(  \varepsilon^{-1}\cdot\right)  \ast
\chi_{\left(  R+\varepsilon\right)  B}\left(  t\right)
\]
and therefore (\ref{D hlawka}).
\end{proof}

\begin{proof}
[Proof of Theorem \ref{minore dpiu1}]By Lemma \ref{ColzLemma} we have%
\begin{align*}
\left\Vert \mathcal{D}_{R}\right\Vert _{p} &  \leqslant\left\vert C_{\gamma
}\right\vert \max_{\pm}\left\vert \left(  R\pm\varepsilon\right)  ^{2}%
-R^{2}\right\vert +\max_{\pm}\left\Vert D_{\varepsilon,R\pm\varepsilon
}\right\Vert _{p}\\
&  \leqslant cR\varepsilon+\max_{\pm}\left\Vert D_{\varepsilon,R\pm
\varepsilon}\left(  t\right)  \right\Vert _{p}\ .
\end{align*}
We write $m=\left(  m_{1},m_{2}\right)  $ and we choose $\varphi\left(
t\right)  $ as in Lemma \ref{ColzLemma}, so that, in particular,%
\[
\left\vert \widehat{\varphi}\left(  \xi\right)  \right\vert \leqslant
\frac{c_{N}}{1+\left\vert \xi\right\vert ^{N}}%
\]
for every $N$. Then%
\begin{align*}
&  \left\Vert D_{\varepsilon,R}\right\Vert _{p}=\left\{  \int_{\mathbb{T}^{2}%
}\left\vert R^{2}\sum\limits_{0\neq m\mathbf{\in}\mathbb{Z}^{2}}%
\widehat{\varphi}\left(  \varepsilon m\right)  \widehat{\chi}_{C_{\gamma}%
}\left(  Rm\right)  e^{2\pi im\cdot t}\right\vert ^{p}\ dt\right\}  ^{1/p}\\
&  \leqslant\left\{  \int_{\mathbb{T}^{2}}\left\vert R^{2}\sum
\limits_{\left\vert \arctan\left(  m_{1}/m_{2}\right)  \right\vert \leqslant
c_{1}\left\vert m\right\vert ^{-1+1/\gamma}}\widehat{\varphi}\left(
\varepsilon m\right)  \widehat{\chi}_{C_{\gamma}}\left(  Rm\right)  e^{2\pi
im\cdot t}\right\vert ^{p}\ dt\right\}  ^{1/p}\\
&  +\left\{  \int_{\mathbb{T}^{2}}\left\vert R^{2}\sum\limits_{c_{1}\left\vert
m\right\vert ^{-1+1/\gamma}\leqslant\left\vert \arctan\left(  m_{1}%
/m_{2}\right)  \right\vert <c_{2}}\widehat{\varphi}\left(  \varepsilon
m\right)  \widehat{\chi}_{C_{\gamma}}\left(  Rm\right)  e^{2\pi im\cdot
t}\right\vert ^{p}\ dt\right\}  ^{1/p}\\
&  +\left\{  \int_{\mathbb{T}^{2}}\left\vert R^{2}\sum\limits_{c_{2}%
\leqslant\left\vert \arctan\left(  m_{1}/m_{2}\right)  \right\vert }%
\widehat{\varphi}\left(  \varepsilon m\right)  \widehat{\chi}_{C_{\gamma}%
}\left(  Rm\right)  e^{2\pi im\cdot t}\right\vert ^{p}\ dt\right\}  ^{1/p}\\
&  =I+II+III\ ,
\end{align*}
say.  In order to prove the first inequality in (\ref{colin}) it is enough to
consider the case $p=4/\left(  3-\gamma\right)  $ (observe that for $\gamma=3$
we have $p=\infty$). We are going to deduce the estimates of $I,\ II,\ III$
from Theorem \ref{decadCgamma}. We have%
\[
I\leqslant cR^{2}\sum\limits_{\left\vert \arctan\left(  m_{1}/m_{2}\right)
\right\vert \leqslant c_{1}\left\vert m\right\vert ^{-1+1/\gamma}}\frac
{1}{1+\left\vert \varepsilon m\right\vert }\left\vert Rm\right\vert
^{-1-1/\gamma}\ .
\]
A modification of the above constant $c_{1}$ allows us to replace the sum
\[
\sum_{\left\vert \arctan\left(  m_{1}/m_{2}\right)  \right\vert \leqslant
c_{1}\left\vert m\right\vert ^{-1+1/\gamma}}
\]
with an integral, but for a
finite number of unit squares close to the origin and centred on the vertical
axis. We write%
\begin{align*}
I &  \leqslant cR^{1-1/\gamma}+cR^{1-1/\gamma}\int_{1}^{+\infty} \  \ \int
_{0}^{c\left(  R\rho\right)  ^{-1+1/\gamma}}\ d\psi\frac{1}{1+\varepsilon\rho
}\rho^{-1-1/\gamma}\rho\ d\rho\\
&  \leqslant cR^{1-1/\gamma}+c\int_{1}^{+\infty}\rho^{-1}\frac{1}%
{1+\varepsilon\rho}\ d\rho=cR^{1-1/\gamma}+c\log\left(  1/\varepsilon\right)
\ .
\end{align*}
By the Hausdorff-Young inequality we have, for $\frac{1}{p}+\frac{1}{q}=1$,%
\begin{align*}
&II \\
&  \leqslant\left\{  \int_{\mathbb{T}^{2}}\left\vert R^{2}\sum
\limits_{c_{1}\left\vert m\right\vert ^{-1+1/\gamma}\leqslant\left\vert
\arctan\left(  m_{1}/m_{2}\right)  \right\vert <c_{2}}\widehat{\varphi}\left(
\varepsilon m\right)  \widehat{\chi}_{C_{\gamma}}\left(  Rm\right)  e^{2\pi
im\cdot t}\right\vert ^{p}\ dt\right\}  ^{1/p}\\
&  \leqslant cR^{2}\left\{  \sum\limits_{c_{1}\left\vert m\right\vert
^{-1+1/\gamma}\leqslant\left\vert \arctan\left(  m_{1}/m_{2}\right)
\right\vert <c_{2}}\left\vert \widehat{\varphi}\left(  \varepsilon m\right)
\widehat{\chi}_{C_{\gamma}}\left(  Rm\right)  \right\vert ^{q}\right\}
^{1/q}\\
&  \leqslant cR^{2}\left\{  \sum\limits_{c_{1}\left\vert m\right\vert
^{-1+1/\gamma}\leqslant\left\vert \arctan\left(  m_{1}/m_{2}\right)
\right\vert <c_{2}}\right.  \\
&  \left.  \ \ \ \ \ \left\vert \frac{1}{1+\left\vert \varepsilon m\right\vert
}\left\vert Rm\right\vert ^{-3/2}\left\vert \frac{m_{1}}{m_{2}}\right\vert
^{\left(  2-\gamma\right)  /\left(  2\gamma-2\right)  }\right\vert
^{q}\right\}  ^{1/q}\\
&  \leqslant cR^{1/2}\left\{  \int_{1}^{+\infty}\int_{c\rho^{-1+1/\gamma}%
}^{c_{2}}\frac{1}{\left(  1+\varepsilon\rho\right)  ^{q}}\rho^{-3q/2}%
\psi^{q\left(  2-\gamma\right)  /\left(  2\gamma-2\right)  }\ d\psi\rho
d\rho\right\}  ^{1/q}\\
&  =cR^{1/2}\left\{  \int_{1}^{+\infty}\frac{1}{\left(  1+\varepsilon
\rho\right)  ^{q}}\rho^{1-3q/2}\int_{c\rho^{-1+1/\gamma}}^{c_{2}}%
\psi^{q\left(  2-\gamma\right)  /\left(  2\gamma-2\right)  }\ d\psi
d\rho\right\}  ^{1/q}\\
&  =cR^{1/2}\left\{  \int_{1}^{+\infty}\frac{1}{\left(  1+\varepsilon
\rho\right)  ^{q}}\rho^{1-3q/2}\ d\rho\right\}  ^{1/q}\\
&  =cR^{1/2}\left\{  \int_{\varepsilon}^{+\infty}\frac{1}{\left(  1+s\right)
^{q}}\left(  \frac{s}{\varepsilon}\right)  ^{1-3q/2}\frac{1}{\varepsilon
}\ ds\right\}  ^{1/q}\\
&  =cR^{1/2}\varepsilon^{3/2-2/q}\\
&  =cR^{1/2}\varepsilon^{\left(  2-\gamma\right)  /2}\ ,
\end{align*}
because $q=4/\left(  \gamma+1\right)  <4/3$.%
\begin{align*}
III &  \leqslant cR^{2}\left\{  \sum\nolimits_{c_{2}\leqslant\left\vert
\arctan\left(  m_{1}/m_{2}\right)  \right\vert }\left(  \frac{1}%
{1+\varepsilon\left\vert m\right\vert }\left\vert Rm\right\vert ^{-3/2}%
\right)  ^{q}\right\}  ^{1/q}\\
&  \leqslant cR^{1/2}\int_{1}^{+\infty}\frac{1}{\left(  1+\varepsilon
\rho\right)  ^{q}}\rho^{1-3q/2}\ d\rho=cR^{1/2}\varepsilon^{3/2-2/q}%
=cR^{1/2}\varepsilon^{\left(  2-\gamma\right)  /2}\ .
\end{align*}
Then%
\[
\left\Vert \mathcal{D}_{R}\right\Vert _{p}\leqslant cR\varepsilon
+cR^{1-1/\gamma}+c\log\left(  1/\varepsilon\right)  +cR^{1/2}\varepsilon
^{\left(  2-\gamma\right)  /2}\ .
\]
By choosing $\varepsilon=R^{-1/\gamma}$ we obtain%
\[
\left\Vert \mathcal{D}_{R}\right\Vert _{p}\leqslant cR^{1-1/\gamma}\ .
\]
A similar computation shows that%
\[
\left\Vert \mathcal{D}_{R}\right\Vert _{\infty}=\sup_{t}\left\vert
\mathcal{D}\left(  RC_{\gamma}+t\right)  \right\vert \leqslant cR^{2/3}\ .
\]
To end the proof we need to show that$\ \ \left\Vert \mathcal{D}%
_{R}\right\Vert _{p}\leqslant cR^{\left(  2\gamma p-4\right)  /\left(  3\gamma
p\right)  }$ for $4/\left(  3-\gamma\right)  <p<\infty$. Interpolation between
the previous two cases yields%
\begin{align*}
\left\Vert \mathcal{D}_{R}\right\Vert _{p} &  =\left\{  \int_{\mathbb{T}^{2}%
}\left\vert \mathcal{D}_{R}\left(  t\right)  \right\vert ^{p}\ dt\right\}
^{1/p}\\
&  \leqslant\left\{  \int_{\mathbb{T}^{2}}\left\Vert \mathcal{D}%
_{R}\right\Vert _{\infty}^{p-4/\left(  3-\gamma\right)  }\left\vert
\mathcal{D}_{R}\left(  t\right)  \right\vert ^{4/\left(  3-\gamma\right)
}\ dt\right\}  ^{1/p}\\
&  =\left\Vert \mathcal{D}_{R}\right\Vert _{\infty}^{1-4/\left(  3p-\gamma
p\right)  }\left\Vert \mathcal{D}_{R}\right\Vert _{4/\left(  3-\gamma\right)
}^{4/\left(  3p-\gamma p\right)  }\leqslant cR^{\left(  2\gamma p-4\right)
/\left(  3\gamma p\right)  }\ .
\end{align*}

\end{proof}

\begin{proof}
[Proof of Theorem \ref{maggiore dpiu1}]It is enough to consider the case
$p=+\infty$. Arguing as in the previous proof we write $\left\Vert
\mathcal{D}_{R}\right\Vert _{\infty}\leqslant I+II+III$ and we obtain
\[
I\leqslant cR^{1-1/\gamma}\ ,\ \ \ \ \ \ \ \ \ \ II\leqslant R^{1/2}%
\varepsilon^{-1/2}\ ,\ \ \ \ \ \ \ \ \ \ III\leqslant R^{1/2}\varepsilon
^{-1/2}\ .
\]
Since now $1-1/\gamma>2/3$, choosing $\varepsilon=R^{-1+2/\gamma}$\ we obtain%
\[
\left\Vert \mathcal{D}_{R}\right\Vert _{\infty}\leqslant cR^{1-1/\gamma}\ .
\]

\end{proof}

\subsection{Discrepancy over translations and rotations}

We obtain better estimates by averaging the discrepancy over translations and
rotations. Here is a result from \cite{Gar}.

\begin{theorem}
\label{Gari} Let $2<\gamma\leqslant3$ and $p<4$ (hence $p\leqslant\left(
2\gamma-2\right)  /\left(  \gamma-2\right)  $). Then%
\begin{equation}
\left\{  \int_{SO\left(  2\right)  }\int_{\mathbb{T}^{2}}\left\vert
\mathcal{D}\left(  R\sigma\left(  C_{\gamma}\right)  +t\right)  \right\vert
^{p}dtd\sigma\right\}  ^{1/p}\leqslant c\ R^{1/2}\ ,\label{L2bianca}%
\end{equation}
where the constant $c$ depends on $\gamma$ and on $p$.
\end{theorem}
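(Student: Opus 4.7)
My plan is to combine Parseval's identity (for $p=2$) and the dual Hausdorff--Young inequality (for $p>2$) on $\mathbb{T}^{2}$ with the spherical average decay of Theorem \ref{decadmedio}, linking the two via Minkowski's integral inequality.

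For $p=2$ I would run Kendall's argument in the same style as the excerpt. Parseval's identity together with $\widehat{\chi}_{R\sigma(C_{\gamma})}(m)=R^{2}\widehat{\chi}_{C_{\gamma}}(R\sigma^{-1}m)$ converts the double integral into
\[
R^{4}\sum_{m\neq 0}\int_{SO(2)}|\widehat{\chi}_{C_{\gamma}}(R\sigma^{-1}m)|^{2}\,d\sigma.
\]
As $\sigma$ varies, $\sigma^{-1}m$ traces the circle of radius $|m|$, so the inner integral is the $L^{2}$ spherical average of $\widehat{\chi}_{C_{\gamma}}$ at radius $R|m|$. Theorem \ref{decadmedio} (applicable at $p=2<(2\gamma-2)/(\gamma-2)$ for every $\gamma>2$) bounds it by $c(R|m|)^{-3}$; summing $\sum_{m\neq 0}|m|^{-3}$ yields $cR$, hence the target $cR^{1/2}$. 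The range $1\leq p<2$ then follows from H\"older's inequality on the finite measure space $SO(2)\times\mathbb{T}^{2}$.

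For $2<p<4$, let $p'\in(4/3,2)$ be the conjugate exponent. For each fixed $\sigma$ the dual Hausdorff--Young inequality on $\mathbb{T}^{2}$ gives
\[
\|\mathcal{D}(R\sigma(C_{\gamma})+\cdot)\|_{L^{p}(\mathbb{T}^{2})}\leq R^{2}\left(\sum_{m\neq 0}|\widehat{\chi}_{C_{\gamma}}(R\sigma^{-1}m)|^{p'}\right)^{1/p'}.
\]
Raising to the $p$-th power, integrating in $\sigma$, and invoking Minkowski's integral inequality (valid because $p\geq p'$) to exchange the $L^{p}_{\sigma}$-norm with the $\ell^{p'}_{m}$-sum, each surviving term becomes an $L^{p}$ spherical average of $\widehat{\chi}_{C_{\gamma}}$ at radius $R|m|$. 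Theorem \ref{decadmedio} bounds this by $c(R|m|)^{-3/2}$, because $\gamma\leq 3$ together with $p<4$ forces $p<(2\gamma-2)/(\gamma-2)$. Pulling out the factor $R^{2}\cdot R^{-3/2}=R^{1/2}$ reduces the whole estimate to the scalar series $\sum_{m\neq 0}|m|^{-3p'/2}$, which converges precisely when $p<4$.

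The delicate point, and the reason for both restrictions in the statement, is the alignment of two a priori independent thresholds: the Hausdorff--Young summability condition $3p'/2>2$ and the ``generic decay'' threshold $p<(2\gamma-2)/(\gamma-2)$ of Theorem \ref{decadmedio}. Both conditions coalesce into the single requirement $p<4$ exactly when $\gamma\leq 3$; outside this range the spherical decay degrades to $\rho^{-1-1/p-1/\gamma+1/(\gamma p)}$, the clean Hausdorff--Young plus Minkowski scheme breaks down, and one would have to reintroduce Hlawka's smoothing, as in the proof of Theorem \ref{minore dpiu1}.
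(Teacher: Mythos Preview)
Your proposal is correct and follows essentially the same route as the paper: apply Hausdorff--Young on $\mathbb{T}^{2}$, interchange the $SO(2)$-integral with the $\ell^{p'}$-sum via Minkowski, and then invoke Theorem~\ref{decadmedio} to bound each term by $c(R|m|)^{-3/2}$, the remaining series $\sum_{m\neq 0}|m|^{-3p'/2}$ converging exactly for $p'>4/3$. The paper presents the argument in a single chain valid for $p\geqslant 2$ (with $p<2$ implicit by H\"older), whereas you make the case split $p\leqslant 2$ versus $2<p<4$ explicit, but the substance is identical.
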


\begin{proof}
Let $q$ be the conjugate index of $p$ (that is $1/p+1/q=1$). By the
inequalities of Hausdorff-Young and Minkowski, and by Theorem \ref{decadmedio}
we have%
\begin{align*}
&  \left\{  \int_{SO\left(  2\right)  }\int_{\mathbb{T}^{2}}\left\vert
\mathcal{D}\left(  R\sigma\left(  C_{\gamma}\right)  +t\right)  \right\vert
^{p}dtd\sigma\right\}  ^{1/p}\\
&  =\left\{  \int_{SO\left(  2\right)  }\left[  \left(  R^{2}\int
_{\mathbb{T}^{2}}\left\vert \sum_{m\neq0}\widehat{\chi}_{C_{\gamma}}\left(
R\sigma\left(  m\right)  \right)  e^{2\pi im\cdot t}\right\vert ^{p}dt\right)
^{1/p}\right]  ^{p}d\sigma\right\}  ^{1/p}\\
&  \leqslant R^{2}\left\{  \int_{SO\left(  2\right)  }\left\{  \sum_{m\neq
0}\left\vert \widehat{\chi}_{C_{\gamma}}\left(  R\sigma\left(  m\right)
\right)  \right\vert ^{q}\right\}  ^{p/q}d\sigma\right\}  ^{1/p}\\
&  =R^{2}\left(  \left\Vert \sum_{m\neq0}\left\vert \widehat{\chi}_{C_{\gamma
}}\left(  R\sigma\left(  m\right)  \right)  \right\vert ^{q}\right\Vert
_{L^{p/q}\left(  SO\left(  2\right)  \right)  }\right)  ^{1/q}\\
&  \leqslant R^{2}\left(  \sum_{m\neq0}\left\Vert \left\vert \widehat{\chi
}_{C_{\gamma}}\left(  R\sigma\left(  m\right)  \right)  \right\vert
^{q}\right\Vert _{L^{p/q}\left(  SO\left(  2\right)  \right)  }\right)
^{1/q}\\
&  \leqslant R^{2}\left\{  \sum_{m\neq0}\left\{  \int_{SO\left(  2\right)
}\left\vert \widehat{\chi}_{C_{\gamma}}\left(  R\sigma\left(  m\right)
\right)  \right\vert ^{p}d\sigma\right\}  ^{q/p}\right\}  ^{1/q}\\
&  \leqslant cR^{2}\left\{  \sum_{m\neq0}\left\vert Rm\right\vert
^{-3q/2}\right\}  ^{1/q}=cR^{1/2}\ ,
\end{align*}
because $q>4/3$.
\end{proof}

It is known that (\ref{L2bianca}) can be reversed (see \cite{BRT} for a proof). Here we
propose a different proof which depends on a general argument. We need a few
preliminary results which are essentially known (see \cite{Ste} and \cite{Hor}).

\begin{proposition}
\label{Asymptotic}Let $\phi\in C^{\infty}\left(  -\infty,+\infty\right)  $ be
a convex function such that $\phi\left(  0\right)  =\phi^{\prime}\left(
0\right)  =0$, $\phi^{\prime\prime}\left(  0\right)  >0$. Let $\delta=\frac
{1}{5}\frac{\phi^{\prime\prime}\left(  0\right)  }{\left\Vert \phi
^{\prime\prime\prime}\right\Vert _{\infty}}$, let $\psi\in C_{0}^{\infty
}\left(  -\delta,\delta\right)  $ and let%
\begin{equation}
I\left(  \lambda\right)  =\int_{\mathbb{R}}e^{i\lambda\phi\left(  x\right)
}\psi\left(  x\right)  dx\ .\label{onedash}%
\end{equation}
Then there exists $c>0$ such that%
\[
\left\vert I\left(  \lambda\right)  -\psi\left(  0\right)  \sqrt{\frac{2\pi
}{\lambda\phi^{\prime\prime}\left(  0\right)  }}e^{i\pi/4}\right\vert
\leqslant c\ \frac{1}{\lambda}\ .
\]
The constant $c$ depends on $\kappa_{1}$ and $\kappa_{2}$, where $\phi
^{\prime\prime}\left(  0\right)  \geqslant\kappa_{1}$, $\left\Vert
\phi\right\Vert _{C^{5}}\leqslant\kappa_{2}$ and $\left\Vert \psi\right\Vert
_{C^{2}}\leqslant\kappa_{2}$.
\end{proposition}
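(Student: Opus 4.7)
The plan is to reduce $I(\lambda)$ to an exact Fresnel integral through a smooth change of variables (the one-dimensional Morse lemma), then evaluate the Fresnel integral and control the remainder by integration by parts.

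\textbf{Normal form.} By Taylor's theorem with integral remainder, $\phi(x)=x^{2}h(x)$ where $h(x)=\int_{0}^{1}(1-s)\phi''(sx)\,ds$, so that $h(0)=\phi''(0)/2$. The choice $\delta=\tfrac{1}{5}\phi''(0)/\|\phi'''\|_{\infty}$ gives $|\phi''(sx)-\phi''(0)|\leq\phi''(0)/5$ for $|x|\leq\delta$, hence $h(x)\geq\tfrac{2}{5}\phi''(0)>0$ and $\sqrt{h(x)}$ is smooth on $(-\delta,\delta)$. Setting
$$u(x):=x\sqrt{2h(x)/\phi''(0)},$$
one obtains a smooth diffeomorphism of $(-\delta,\delta)$ onto its image with $u(0)=0$, $u'(0)=1$, and $\phi(x)=\tfrac{1}{2}\phi''(0)u(x)^{2}$.

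\textbf{Reduction to Fresnel.} Let $x=x(u)$ denote the inverse diffeomorphism and set $g(u)=\psi(x(u))x'(u)$; since $\psi\in C_{0}^{\infty}(-\delta,\delta)$ and $\phi\in C^{5}$, the function $g$ is $C^{2}$, compactly supported, with $g(0)=\psi(0)$. Writing $\mu=\phi''(0)$, the change of variables turns \eqref{onedash} into $I(\lambda)=\int_{\mathbb{R}}e^{i\lambda\mu u^{2}/2}g(u)\,du$. Choose a cutoff $\chi\in C_{c}^{\infty}(\mathbb{R})$ with $\chi\equiv 1$ in a neighbourhood of $0$, and decompose $I(\lambda)=g(0)\int_{\mathbb{R}}e^{i\lambda\mu u^{2}/2}du+R_{1}(\lambda)+R_{2}(\lambda)$, where
$$R_{1}(\lambda)=g(0)\!\int_{\mathbb{R}}\!e^{i\lambda\mu u^{2}/2}(\chi(u)-1)\,du,\qquad R_{2}(\lambda)=\int_{\mathbb{R}}e^{i\lambda\mu u^{2}/2}\bigl(g(u)-g(0)\chi(u)\bigr)\,du.$$
The leading integral is the classical Fresnel integral $\sqrt{2\pi/(\lambda\mu)}\,e^{i\pi/4}$, which together with $g(0)=\psi(0)$ produces the stated main term.

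\textbf{Remainder estimates.} For $R_{1}$: on the support of $\chi-1$ one has $|u|\geq c_{0}>0$, so $(\chi(u)-1)/u$ is smooth; writing $e^{i\lambda\mu u^{2}/2}=(i\lambda\mu u)^{-1}\tfrac{d}{du}e^{i\lambda\mu u^{2}/2}$ and integrating by parts yields $R_{1}(\lambda)=-\tfrac{g(0)}{i\lambda\mu}\int_{\mathbb{R}}e^{i\lambda\mu u^{2}/2}\tfrac{d}{du}\bigl[\tfrac{\chi(u)-1}{u}\bigr]du$; the integrand is $O(u^{-2})$ at infinity and bounded elsewhere, so $|R_{1}(\lambda)|\leq c/\lambda$. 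For $R_{2}$: the function $g(u)-g(0)\chi(u)$ is $C^{2}$, compactly supported, and vanishes at $u=0$, hence equals $uG(u)$ with $G\in C_{c}^{1}$; using $ue^{i\lambda\mu u^{2}/2}=(i\lambda\mu)^{-1}\tfrac{d}{du}e^{i\lambda\mu u^{2}/2}$, one integration by parts gives $R_{2}(\lambda)=-\tfrac{1}{i\lambda\mu}\int_{\mathbb{R}}e^{i\lambda\mu u^{2}/2}G'(u)\,du=O(1/\lambda)$.

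\textbf{Main obstacle.} The sole delicate point is ensuring that the constants in the two integration-by-parts estimates depend only on $\kappa_{1},\kappa_{2}$. The lower bound $h\geq\tfrac{2}{5}\phi''(0)$ and the hypothesis $\phi\in C^{5}$ force $h\in C^{3}$, hence $u\in C^{3}$, $x(u)\in C^{3}$, $g\in C^{2}$, and finally $G\in C^{1}$ with norms controlled by $\kappa_{1},\kappa_{2}$ alone. This is exactly what is needed to conclude $|R_{1}(\lambda)|+|R_{2}(\lambda)|\leq c/\lambda$ with $c=c(\kappa_{1},\kappa_{2})$.
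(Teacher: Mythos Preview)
Your proof is correct and follows essentially the same route as the paper: a Morse-lemma change of variables reduces $I(\lambda)$ to a Fresnel-type integral, the main term is peeled off via a cutoff, and the two remainders are handled by one integration by parts each, with the dependence of constants tracked through the $C^{3}$ control of the diffeomorphism. The only cosmetic difference is that the paper inserts a Gaussian weight $e^{-u^{2}}$ so that the leading integral $\int e^{(i\lambda-1)u^{2}}\,du$ is absolutely convergent and computable from scratch (then expands $(1-i\lambda)^{-1/2}$), whereas you quote the improper Fresnel integral $\int e^{i\lambda\mu u^{2}/2}\,du=\sqrt{2\pi/(\lambda\mu)}\,e^{i\pi/4}$ directly; both are legitimate, the paper's version being more self-contained and yours slightly more streamlined.
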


The proof is not short just because we want a constant $c$ that depends on the
norms of the functions and not on the functions themselves.

The proof of Proposition \ref{Asymptotic} needs a few lemmas.

\begin{lemma}
\label{Omega}Let $\phi\in C^{\infty}\left(  -\delta,\delta\right)  $ be a
smooth function and let, for $|x|<\delta$,
\[
\omega\left(  x\right)  =x^{-k}\int_{0}^{x}\left(  x-t\right)  ^{n}\phi\left(
t\right)  dt
\]
with $n,k\geqslant0$. Then for $0\leqslant r\leqslant n+1-k$ there exists $c$,
independent of $\phi$, such that%
\[
\left\vert \omega^{\left(  r\right)  }\left(  x\right)  \right\vert \leqslant
c\ \delta^{n+1-k-r}\left\Vert \phi\right\Vert _{\infty}\ .
\]

\end{lemma}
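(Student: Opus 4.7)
The plan is to write $\omega(x) = x^{-k} F(x)$, where $F(x) = \int_0^x (x-t)^n \phi(t)\,dt$, and to bound the derivatives of $F$ by repeated integration by parts before combining via Leibniz's rule. The key point is that the estimate must be linear in $\|\phi\|_\infty$ only, so we cannot differentiate $\phi$; we can, however, differentiate under the integral sign in $F$, shifting derivatives onto the factor $(x-t)^n$.

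First I would observe that, since $(x-t)^{n-j+1}\big|_{t=x} = 0$ for $0 \leqslant j \leqslant n$, differentiating under the integral sign yields
\[
F^{(j)}(x) = \frac{n!}{(n-j)!}\int_0^x (x-t)^{n-j}\phi(t)\,dt,\qquad 0\leqslant j\leqslant n,
\]
and $F^{(n+1)}(x) = n!\,\phi(x)$. In either case one gets the uniform bound $|F^{(j)}(x)| \leqslant c\,\delta^{n+1-j}\|\phi\|_\infty$ for $|x|<\delta$ and $0\leqslant j\leqslant n+1$, with $c$ depending only on $n$.

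Next I would apply Leibniz's rule to $\omega = x^{-k}\cdot F$:
\[
\omega^{(r)}(x) = \sum_{j=0}^{r}\binom{r}{j}\bigl(x^{-k}\bigr)^{(r-j)} F^{(j)}(x).
\]
Since $\bigl|(x^{-k})^{(r-j)}\bigr|\leqslant c\,|x|^{-k-r+j}$, each summand is controlled by $c\,|x|^{-k-r+j}\cdot\delta^{n+1-j}\|\phi\|_\infty$. Under the hypothesis $0 \leqslant r \leqslant n+1-k$ we have $j \leqslant r \leqslant n+1$, so the use of the $F^{(j)}$-bound is legitimate, and the net power of $|x|$ appearing after the cancellation is $n+1-k-r \geqslant 0$. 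Using $|x| \leqslant \delta$ then gives the uniform bound $c\,\delta^{n+1-k-r}\|\phi\|_\infty$ asserted in the lemma.

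The only subtlety—and really the entire content of the lemma—is ensuring that the factor $x^{-k}$ does not create a singularity at the origin: this is exactly what the assumption $r\leqslant n+1-k$ guarantees, by making the net power of $x$ non-negative after combining the vanishing of $F^{(j)}$ at $0$ (of order $n+1-j$) with the singularity of $(x^{-k})^{(r-j)}$ (of order $k+r-j$). No other estimate is needed.
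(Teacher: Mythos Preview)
Your approach via Leibniz's rule is essentially the paper's argument, which proceeds by an equivalent induction showing that $\omega^{(r)}$ is a linear combination of terms $x^{-\alpha}\int_0^x(x-t)^\beta\phi(t)\,dt/\beta!$ with $\beta-\alpha=n-k-r$ and $\beta\geqslant 0$, then bounds each term.

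One slip in the writeup, however: the bound $|F^{(j)}(x)|\leqslant c\,\delta^{n+1-j}\|\phi\|_\infty$ you state is too weak to combine with $|(x^{-k})^{(r-j)}|\leqslant c\,|x|^{-k-r+j}$, since the latter blows up as $x\to 0$ and a $\delta$-power cannot cancel an $|x|$-singularity. What you need (and what you correctly invoke in your final paragraph as ``the vanishing of $F^{(j)}$ at $0$ of order $n+1-j$'') is the sharper and equally immediate estimate
\[
|F^{(j)}(x)|\leqslant c\,|x|^{n+1-j}\|\phi\|_\infty,
\]
coming from $\bigl|\int_0^x(x-t)^{n-j}\phi(t)\,dt\bigr|\leqslant |x|^{n-j}\cdot|x|\cdot\|\phi\|_\infty$. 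With this in place each Leibniz term is bounded by $c\,|x|^{n+1-k-r}\|\phi\|_\infty\leqslant c\,\delta^{n+1-k-r}\|\phi\|_\infty$, and the argument is complete.
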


\begin{proof}
Clearly%
\[
\left\vert \omega\left(  x\right)  \right\vert \leqslant\left\vert
x\right\vert ^{-k}\left\vert x\right\vert ^{n+1}\left\Vert \phi\right\Vert
_{\infty}\leqslant\delta^{n+1-k}\left\Vert \phi\right\Vert _{\infty}\ .
\]
We claim that, for $1\leqslant r\leqslant n+1-k,$ the derivative $\omega
^{\left(  r\right)  }\left(  x\right)  $ is a linear combination of terms of
the form%
\[
x^{-\alpha}\int_{0}^{x}\frac{\left(  x-t\right)  ^{\beta}}{\beta!}%
\ \phi\left(  t\right)  dt
\]
with $\beta-\alpha=n-k-r$ and $\beta\geqslant0$. The proof is by induction and it is enough to observe that 
\begin{align*}
&  \frac{d}{dx}\left(  x^{-\alpha}\int_{0}^{x}\frac{\left(  x-t\right)
^{\beta}}{\beta!}\phi\left(  t\right)  dt\right)  \\
&  =-\alpha x^{-\alpha-1}\int_{0}^{x}\frac{\left(  x-t\right)  ^{\beta}}%
{\beta!}\phi\left(  t\right)  dt+\beta x^{-\alpha}\int_{0}^{x}\frac{\left(
x-t\right)  ^{\beta-1}}{\beta!}\ \phi\left(  t\right)  \ dt\ .
\end{align*}
Hence%
\begin{align*}
\left\vert \omega^{\left(  r\right)  }\left(  x\right)  \right\vert  &
\leqslant c\sum_{\alpha+\beta=n-k-r,~\beta\geqslant0,}\left\vert x\right\vert
^{\alpha}\int_{0}^{\left\vert x\right\vert }\left\vert x\right\vert ^{\beta
}\left\vert \phi\left(  t\right)  \right\vert dt\\
&  \leqslant c\sum_{\alpha+\beta=n-k-r,~\beta\geqslant0,}\delta^{\alpha
+\beta+1}\left\Vert \phi\right\Vert _{\infty}\leqslant c\delta^{n+1-k-r}%
\left\Vert \phi\right\Vert _{\infty}\ .
\end{align*}

\end{proof}

\begin{lemma}
\label{Psi}Let $\phi\in C^{\infty}\left(  -\delta,+\delta\right)  $ such that
\[
\phi\left(  0\right)  =\phi^{\prime}\left(  0\right)  =\cdots=\phi^{\left(
k-1\right)  }\left(  0\right)  =0 \;.. 
\]
Then the function%
\[
\psi\left(  x\right)  =\frac{\phi\left(  x\right)  }{x^{k}}%
\]
is smooth and for every integer $n\geqslant0$ we have%
\[
\left\Vert \psi\right\Vert _{C^{n}}\leqslant c\left\Vert \phi\right\Vert
_{C^{n+k}}\ .
\]

\end{lemma}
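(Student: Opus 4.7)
The plan is to invoke Taylor's theorem with integral remainder and then change variables, which will simultaneously give smoothness of $\psi$ and the $C^n$ estimate in one stroke. Since $\phi^{(j)}(0)=0$ for $0\leqslant j\leqslant k-1$, the integral form of Taylor's remainder gives, for $|x|<\delta$,
\[
\phi(x)=\int_0^x\frac{(x-t)^{k-1}}{(k-1)!}\,\phi^{(k)}(t)\,dt\;.
\]
The substitution $t=xs$, $s\in[0,1]$, factors out exactly $x^k$ and produces the representation
\[
\psi(x)=\frac{\phi(x)}{x^k}=\int_0^1\frac{(1-s)^{k-1}}{(k-1)!}\,\phi^{(k)}(xs)\,ds\;.
\]
The right-hand side is defined and continuous on all of $(-\delta,\delta)$ (with value $\phi^{(k)}(0)/k!$ at the origin), so the apparent singularity of $x^{-k}$ has been absorbed.

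Next I would differentiate under the integral sign $n$ times, which is permitted because $\phi$ is smooth and the $x$-derivatives of $(1-s)^{k-1}s^j\phi^{(k+j)}(xs)/(k-1)!$ are uniformly bounded in $s$ on compact subsets of $(-\delta,\delta)$. This yields
\[
\psi^{(n)}(x)=\int_0^1\frac{(1-s)^{k-1}\,s^n}{(k-1)!}\,\phi^{(k+n)}(xs)\,ds\;,
\]
from which the estimate follows immediately:
\[
|\psi^{(n)}(x)|\leqslant\|\phi^{(k+n)}\|_\infty\int_0^1\frac{(1-s)^{k-1}s^n}{(k-1)!}\,ds
=\frac{n!}{(n+k)!}\,\|\phi^{(k+n)}\|_\infty\leqslant c\,\|\phi\|_{C^{n+k}}\;.
\]

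There is no real obstacle; the only step of substance is the change of variables $t=xs$, which trades the apparently singular $x^{-k}$ for an integrand that is smooth in $x$ up to and including $x=0$. One could alternatively try to appeal directly to Lemma \ref{Omega} with its $n$ equal to $k-1$ and with $\phi$ there replaced by $\phi^{(k)}/(k-1)!$, but the constraint $0\leqslant r\leqslant n+1-k$ in that lemma would then only yield derivatives of $\psi$ up to order zero, and one would have to expand $\phi$ to higher Taylor order $N\geqslant k+n-1$ and then re-apply Lemma \ref{Omega} to the new remainder. The change-of-variables route delivers every order $n$ at once, so I would favour it.
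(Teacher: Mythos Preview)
Your proof is correct and is genuinely different from the paper's. The paper expands $\phi$ in Taylor form to order $n+k-1$,
\[
\phi(x)=\sum_{j=k}^{n+k-1}\frac{\phi^{(j)}(0)}{j!}x^{j}+\int_{0}^{x}\frac{(x-t)^{n+k-1}}{(n+k-1)!}\,\phi^{(n+k)}(t)\,dt,
\]
so that after dividing by $x^{k}$ the polynomial part becomes a polynomial of degree $n-1$ (trivially controlled by $\|\phi\|_{C^{n+k-1}}$) and the remainder is exactly of the form $x^{-k}\int_{0}^{x}(x-t)^{n+k-1}g(t)\,dt$ to which Lemma~\ref{Omega} applies with $r$ ranging up to $n$. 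In other words, the paper does precisely the ``expand to higher Taylor order and re-apply Lemma~\ref{Omega}'' manoeuvre that you describe and discard in your last paragraph. Your change of variables $t=xs$ bypasses Lemma~\ref{Omega} entirely, gives an explicit constant $n!/(n+k)!$, and handles all $n$ simultaneously from a single Taylor expansion of order $k$; the paper's route, by contrast, requires a fresh expansion for each $n$ and leans on the inductive bookkeeping of Lemma~\ref{Omega}. Both arguments are short, but yours is the cleaner of the two.
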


\begin{proof}
By the integral form of the remainder in Taylor's theorem, for every $n$ we
can write
\begin{align*}
\phi\left(  x\right)    & =\frac{\phi^{\left(  k\right)  }\left(  0\right)
}{k!}x^{k}+\cdots+\frac{\phi^{\left(  n+k-1\right)  }\left(  0\right)
}{\left(  n+k-1\right)  !}x^{n+k-1}\\
& +\int_{0}^{x}\frac{\left(  x-t-1\right)  ^{n+k-1}}{\left(  n+k-1\right)
!}\phi^{\left(  n+k\right)  }\left(  t\right)  dt
\end{align*}
(if $n=0$ then only the integral appears). Let%
\[
\omega\left(  x\right)  =x^{-k}\int_{0}^{x}\frac{\left(  x-t-1\right)
^{n+k-1}}{\left(  n+k-1\right)  !}\phi^{\left(  n+k\right)  }\left(  t\right)
dt\ .
\]
Then, by Lemma \ref{Omega} we have
\begin{align*}
\left\Vert \psi\right\Vert _{C^{n}} &  \leqslant c\left\Vert \phi\right\Vert
_{C^{n+k-1}}+\left\Vert \omega\right\Vert _{C^{n}}\\
&  \leqslant c\left\Vert \phi\right\Vert _{C^{n+k-1}}+c\left\Vert
\phi^{\left(  n+k\right)  }\right\Vert _{\infty}\leqslant c\left\Vert
\phi\right\Vert _{C^{n+k}}\ .
\end{align*}

\end{proof}

\begin{lemma}
\label{Change of variables}There exist absolute constants $c_{1},c_{2}>0$ such
that if $\phi\in C^{\infty}\left(  -\infty,+\infty\right)  $ is a convex
function satisfying $\phi\left(  0\right)  =\phi^{\prime}\left(  0\right)
=0$, $\phi^{\prime\prime}\left(  0\right)  >0$, and $\delta=\frac{\phi
^{\prime\prime}\left(  0\right)  }{\left\Vert \phi^{\prime\prime\prime
}\right\Vert _{\infty}}$, then%
\[
g\left(  x\right)  =x\sqrt{\frac{\phi\left(  x\right)  }{x^{2}}}\ .
\]
is smooth and invertible in $\left(  -\delta,\delta\right)  $. Moreover
\begin{equation}
g^{\prime}\left(  0\right)  =\sqrt{\frac{\phi^{\prime\prime}\left(  0\right)
}{2}}\label{twodashes}%
\end{equation}
and, for $\left\vert x\right\vert <\delta$,%
\[
c_{1}\sqrt{\phi^{\prime\prime}\left(  0\right)  }\leqslant g^{\prime}\left(
x\right)  \leqslant c_{2}\sqrt{\phi^{\prime\prime}\left(  0\right)  }\ .
\]
Finally $\left\Vert g\right\Vert _{C^{n}}$ can be bounded from above by a
constant that depends only on $\left\Vert \phi\right\Vert _{C^{2+n}}$, and
from below by a constant that depends only on $\phi^{\prime\prime}\left(
0\right)  $.
\end{lemma}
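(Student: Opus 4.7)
\medskip

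\noindent\textbf{Proposal for the proof of Lemma \ref{Change of variables}.}

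The first move is to rewrite $g(x)=x\sqrt{h(x)}$ where $h(x):=\phi(x)/x^{2}$. Since $\phi(0)=\phi'(0)=0$, Lemma \ref{Psi} (with $k=2$) guarantees that $h$ extends to a $C^{\infty}$ function on the whole real line with $\|h\|_{C^{n}}\leqslant c\,\|\phi\|_{C^{n+2}}$. Moreover the integral remainder form of Taylor's theorem gives
\[
h(x)=\tfrac{1}{2}\phi''(0)+\frac{1}{x^{2}}\int_{0}^{x}\frac{(x-t)^{2}}{2}\phi'''(t)\,dt\;,
\]
so $h(0)=\phi''(0)/2$, and for $|x|<\delta=\phi''(0)/\|\phi'''\|_{\infty}$ the error term is bounded in absolute value by $|x|\,\|\phi'''\|_{\infty}/6<\phi''(0)/6$. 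Consequently, on $(-\delta,\delta)$ we have the two-sided bound
\[
\tfrac{1}{3}\phi''(0)\leqslant h(x)\leqslant \tfrac{2}{3}\phi''(0)\;,
\]
so $h$ stays uniformly positive and $g=x\sqrt{h}$ is smooth on $(-\delta,\delta)$.

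Next I would compute $g'$. Differentiating,
\[
g'(x)=\sqrt{h(x)}+\frac{x\,h'(x)}{2\sqrt{h(x)}}\;,
\]
and evaluation at $x=0$ immediately yields \eqref{twodashes}: $g'(0)=\sqrt{h(0)}=\sqrt{\phi''(0)/2}$. To control the second term on $(-\delta,\delta)$, rewrite $h'(x)=(x\phi'(x)-2\phi(x))/x^{3}$ and use Taylor once more to get
\[
x\phi'(x)-2\phi(x)=\int_{0}^{x}(x-t)\,t\,\phi'''(t)\,dt\;,
\]
hence $|h'(x)|\leqslant \|\phi'''\|_{\infty}/6$ throughout $(-\delta,\delta)$. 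Combining with the previous estimate on $h$,
\[
\left|\frac{x\,h'(x)}{2\sqrt{h(x)}}\right|\leqslant \frac{\delta\,\|\phi'''\|_{\infty}}{12\sqrt{\phi''(0)/3}}=\frac{\sqrt{3\phi''(0)}}{12}\;,
\]
which together with $\sqrt{h(x)}\in[\sqrt{\phi''(0)/3},\sqrt{2\phi''(0)/3}]$ produces absolute constants $c_{1},c_{2}$ such that $c_{1}\sqrt{\phi''(0)}\leqslant g'(x)\leqslant c_{2}\sqrt{\phi''(0)}$. In particular $g'>0$, so $g$ is strictly monotone and hence invertible on $(-\delta,\delta)$.

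For the $C^{n}$ bounds, the upper estimate follows by applying Fa\`a di Bruno to the composition $\sqrt{h}=s\circ h$, where $s(u)=\sqrt{u}$ is smooth on the interval $[\phi''(0)/3,2\phi''(0)/3]$: the relevant derivatives of $s$ are bounded in terms of $\phi''(0)$ from below, and $\|h\|_{C^{n}}\leqslant c\,\|\phi\|_{C^{n+2}}$ by Lemma \ref{Psi}, giving $\|g\|_{C^{n}}\leqslant C(\|\phi\|_{C^{n+2}})$ after multiplying by $x$. The lower bound is immediate: $\|g\|_{C^{n}}\geqslant |g'(0)|=\sqrt{\phi''(0)/2}$. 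The only delicate point, and the step I would double-check most carefully, is the bookkeeping that keeps $c_{1}$ and $c_{2}$ genuinely \emph{absolute}: the calibration $\delta=\phi''(0)/\|\phi'''\|_{\infty}$ is chosen precisely so that the ratio $\delta\,\|\phi'''\|_{\infty}/\sqrt{\phi''(0)}$ is itself a universal multiple of $\sqrt{\phi''(0)}$, and this scaling is what has to be tracked throughout.
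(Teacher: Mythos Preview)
Your proof is correct and follows essentially the same approach as the paper. The only cosmetic difference is that you differentiate $g=x\sqrt{h}$ by the product rule and then estimate $h'$, whereas the paper observes $g^{2}=\phi$ to obtain the equivalent expression $g'(x)=\tfrac12\,\dfrac{\phi'(x)}{x}\,h(x)^{-1/2}$ and instead estimates $\phi'(x)/x$; since $2h+xh'=\phi'/x$, these are two forms of the same calculation.
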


\begin{proof}
The integral form of the remainder in Taylor's theorem and Lemma \ref{Omega}
yield%
\[
\frac{\phi\left(  x\right)  }{x^{2}}=\frac{\phi^{\prime\prime}\left(
0\right)  }{2}+x^{-2}\int_{0}^{x}\frac{\left(  x-t\right)  ^{2}}{2}%
\phi^{\prime\prime\prime}\left(  t\right)  \ dt\ ,
\]
so that, for $\left\vert x\right\vert <\delta$, we have%
\[
\left\vert \frac{\phi\left(  x\right)  }{x^{2}}-\frac{\phi^{\prime\prime
}\left(  0\right)  }{2}\right\vert \leqslant\frac{1}{6}\delta\left\Vert
\phi^{\prime\prime\prime}\right\Vert _{\infty}=\frac{1}{6}\phi^{\prime\prime
}\left(  0\right)  \ .
\]
Hence, for $\left\vert x\right\vert <\delta$,%
\[
\frac{1}{3}\ \phi^{\prime\prime}\left(  0\right)  \leqslant\frac{\phi\left(
x\right)  }{x^{2}}\leqslant\frac{2}{3}\ \phi^{\prime\prime}\left(  0\right)
\ .
\]
Observe that this and Lemma \ref{Psi} imply that $g\left(  x\right)  $ is
smooth. Similarly%
\[
\left\vert \frac{\phi^{\prime}\left(  x\right)  }{x}-\phi^{\prime\prime
}\left(  0\right)  \right\vert   =\left\vert x^{-1}\int_{0}^{x}\left(
x-t\right)  \phi^{\prime\prime\prime}\left(  t\right)  dt\right\vert 
  \leqslant\frac{1}{2}\delta\left\Vert \phi^{\prime\prime\prime}\right\Vert
_{\infty}=\frac{1}{2}\phi^{\prime\prime}\left(  0\right)  \ ,
\]
so that%
\[
\frac{1}{2}\phi^{\prime\prime}\left(  0\right)  \leqslant\frac{\phi^{\prime
}\left(  x\right)  }{x}\leqslant\frac{3}{2}\phi^{\prime\prime}\left(
0\right)  \ .
\]
Finally, since%
\[
g^{\prime}\left(  x\right)  =\frac{1}{2}\frac{\phi^{\prime}\left(  x\right)
}{x}\left(  \frac{\phi\left(  x\right)  }{x^{2}}\right)  ^{-1/2}\ ,
\]
there are absolute constants $c_{1},c_{2}>0$ such that%
\[
c_{1}\sqrt{\phi^{\prime\prime}\left(  0\right)  }\leqslant\left\vert
g^{\prime}\left(  x\right)  \right\vert \leqslant c_{2}\sqrt{\phi
^{\prime\prime}\left(  0\right)  }\ .
\]
Observe that%
\[
\frac{d^{n}}{dx^{n}}\left(  x\sqrt{\frac{\phi\left(  x\right)  }{x^{2}}%
}\right)  \leq c^{\ast}\ ,
\]
where the constant $c^{\ast}$ depends on a lower bound for $\frac{\phi\left(
x\right)  }{x^{2}}$ and a lower bound for $\frac{d^k}{dx^{k}}\left(  \frac
{\phi\left(  x\right)  }{x^{2}}\right)  $, when $k\leq n$. Then, by Lemma
\ref{Psi}, $c^{\ast}$ depends on a lower bound of $\phi^{\prime\prime}\left(  0\right)  $ and
on $\left\Vert \phi\right\Vert _{C^{n+2}}$.
\end{proof}

\begin{proof}
[Proof of Proposition \ref{Asymptotic}]Let $I\left(  \lambda\right)  $ be as
in (\ref{onedash}). Again let $g=x\sqrt{\frac{\phi\left(  x\right)  }{x^{2}}}%
$. Then $\left[  g\left(  x\right)  \right]  ^{2}=\phi\left(  x\right)  $, so
that the change of variables $u=g\left(  x\right)  $  and Lemma
\ref{Change of variables} yield%
\[
I\left(  \lambda\right)  =\int_{\mathbb{R}}e^{i\lambda u^{2}}\frac{\psi\left(
g^{-1}\left(  u\right)  \right)  }{g^{\prime}\left(  g^{-1}\left(  u\right)
\right)  }du=\int_{\mathbb{R}}e^{i\lambda u^{2}}h\left(  u\right)  du\ ,
\]
with $h\left(  u\right)  $ smooth and compactly supported. Let $\eta\in
C_{0}^{\infty}\left(  -\infty,+\infty\right)  $ such that $\eta\left(
u\right)  \equiv1$ on the support of $h\left(  u\right)  $ and let%
\[
R\left(  u\right)  =\frac{h\left(  u\right)  e^{u^{2}}-h\left(  0\right)  }%
{u}\ .
\]
Then%
\begin{align*}
I\left(  \lambda\right)   &  =\int_{\mathbb{R}}e^{i\lambda u^{2}}h\left(
u\right)  \eta\left(  u\right)  du \\
&=\int_{\mathbb{R}}e^{i\lambda u^{2}%
}e^{-u^{2}}\left[  h\left(  u\right)  e^{u^{2}}\right]  \eta\left(  u\right)
du\\
&  =\int_{\mathbb{R}}e^{i\lambda u^{2}}e^{-u^{2}}\left[  h\left(  0\right)
+uR\left(  u\right)  \right]  \eta\left(  u\right)  du\\
&  =h\left(  0\right)  \int_{\mathbb{R}}e^{i\lambda u^{2}}e^{-u^{2}}%
\eta\left(  u\right)  du+\int_{\mathbb{R}}e^{i\lambda u^{2}}e^{-u^{2}%
}uR\left(  u\right)  \eta\left(  u\right)  du\\
&  =h\left(  0\right)  \int_{\mathbb{R}}e^{i\lambda u^{2}}e^{-u^{2}%
}du+h\left(  0\right)  \int_{\mathbb{R}}e^{i\lambda u^{2}}e^{-u^{2}}\left[
1-\eta\left(  u\right)  \right]  du\\
&  +\int_{\mathbb{R}}e^{i\lambda u^{2}}e^{-u^{2}}uR\left(  u\right)
\eta\left(  u\right)  du\\
&  =I_{1}\left(  \lambda\right)  +I_{2}\left(  \lambda\right)  +I_{3}\left(
\lambda\right)  \ .
\end{align*}
The integral in $I_{1}\left(  \lambda\right)  $ can be computed through a
familiar trick:%
\begin{align*}
\left(  \int_{-\infty}^{+\infty}e^{i\lambda u^{2}}e^{-u^{2}}du\right)  ^{2} &
=\int_{-\infty}^{+\infty}\int_{-\infty}^{+\infty}e^{i\lambda\left(
u^{2}+v^{2}\right)  }e^{-\left(  u^{2}+v^{2}\right)  }dudv\\
&  =\int_{0}^{2\pi}\int_{0}^{+\infty}e^{\left(  i\lambda-1\right)  \rho^{2}%
}\rho d\rho d\theta  =\frac{\pi}{1-i\lambda}\ .
\end{align*}
Hence (\ref{twodashes}) yields%
\[
I_{1}\left(  \lambda\right)  =h\left(  0\right)  \frac{\sqrt{\pi}}{\left(
1-i\lambda\right)  ^{1/2}}=\frac{\psi\left(  0\right)  }{\sqrt{\phi
^{\prime\prime}\left(  0\right)  }}\ \frac{\sqrt{2\pi}}{\left(  1-i\lambda
\right)  ^{1/2}}%
\]
(here we consider the branch of $z^{1/2}$ that for $z>0$ agrees with $\sqrt
{z})$. Then, for $\lambda>1$,%
\begin{align*}
I_{1}\left(  \lambda\right)   &  =\frac{\psi\left(  0\right)  }{\sqrt
{\phi^{\prime\prime}\left(  0\right)  }}\ \sqrt{2\pi}\left(  -i\lambda\left(
1+\frac{1}{-i\lambda}\right)  \right)  ^{-1/2}\\
&  =\frac{\psi\left(  0\right)  }{\sqrt{\phi^{\prime\prime}\left(  0\right)
}}\ \frac{\sqrt{2\pi}}{\sqrt{\lambda}}\ e^{i\pi/4}+\frac{\psi\left(  0\right)
}{\sqrt{\phi^{\prime\prime}\left(  0\right)  }} \ O\left(  \frac{1}{\lambda
}\right)  \ .
\end{align*}
Integration by parts in $I_{2}\left(  \lambda\right)  $ yields%
\begin{align*}
I_{2}\left(  \lambda\right)   &  =\frac{\sqrt{2}\psi\left(  0\right)  }%
{\sqrt{\phi^{\prime\prime}\left(  0\right)  }}\int_{-\infty}^{+\infty
}e^{i\lambda u^{2}}e^{-u^{2}}\left[  1-\eta\left(  u\right)  \right]  du\\
&  =\frac{\psi\left(  0\right)  }{i\lambda\sqrt{2\phi^{\prime\prime}\left(
0\right)  }}\int_{-\infty}^{+\infty}2i\lambda ue^{i\lambda u^{2}}%
\frac{e^{-u^{2}}\left[  1-\eta\left(  u\right)  \right]  }{u}du\\
&  =\frac{\psi\left(  0\right)  }{i\lambda\sqrt{2\phi^{\prime\prime}\left(
0\right)  }}\int_{-\infty}^{+\infty}e^{i\lambda u^{2}}\frac{d}{du}\left[
\frac{e^{-u^{2}}\left[  1-\eta\left(  u\right)  \right]  }{u}\right]  du\ ,
\end{align*}
so that%
\[
\left\vert I_{2}\left(  \lambda\right)  \right\vert \leqslant c\ \frac
{1}{\lambda}\ \frac{\left\vert \psi\left(  0\right)  \right\vert }{\sqrt
{\phi^{\prime\prime}\left(  0\right)  }}%
\]
(note that we can always assume that $\eta\left(  u\right)  \equiv1$ in a
given neighbourhood of the origin). Finally,%
\begin{align*}
I_{3}\left(  \lambda\right)   &  =\frac{1}{2i\lambda}\int2i\lambda
ue^{i\lambda u^{2}}e^{-u^{2}}R\left(  u\right)  \eta\left(  u\right)  \ du\\
&  =\frac{1}{2i\lambda}\int e^{i\lambda u^{2}}\frac{d}{du}\left[  e^{-u^{2}%
}R\left(  u\right)  \eta\left(  u\right)  \right]  \ du
\end{align*}
so that
\[
\left\vert I_{3}\left(  \lambda\right)  \right\vert \leqslant\frac{1}%
{2\lambda}\int\left\vert \frac{d}{du}\left[  e^{-u^{2}}R\left(  u\right)
\eta\left(  u\right)  \right]  \right\vert \ du\ .
\]
Since%
\[
h\left(  u\right)  e^{u^{2}}=h\left(  0\right)  +h^{\prime}\left(  0\right)
u+\int_{0}^{u}\left(  u-t\right)  \frac{d^{2}}{dt^{2}}\left[  e^{t^{2}%
}h\left(  t\right)  \right]  \ dt\ ,
\]
we have%
\[
R\left(  u\right)  =h^{\prime}\left(  0\right)  +\frac{1}{u}\int_{0}%
^{u}\left(  u-t\right)  \frac{d^{2}}{dt^{2}}\left[  e^{t^{2}}h\left(
t\right)  \right]  \ dt\ ,
\]%
\[
\left\vert R\left(  u\right)  \right\vert \leqslant\left\vert h^{\prime
}\left(  0\right)  \right\vert +\sup\left\vert \frac{d^{2}}{dt^{2}}\left[
e^{t^{2}}h\left(  t\right)  \right]  \right\vert \ ,
\]
where the supremum is on the support of $h\left(  t\right)  $. We also have%
\[
R^{\prime}\left(  u\right)  =\frac{1}{u^{2}}\int_{0}^{u}t\frac{d^{2}}{dt^{2}%
}\left[  e^{t^{2}}h\left(  t\right)  \right]  \ dt\ ,
\]
so that%
\[
\left\vert R^{\prime}\left(  u\right)  \right\vert \leqslant\sup\left\vert
\frac{d^{2}}{dt^{2}}\left[  e^{u^{2}}h\left(  t\right)  \right]  \right\vert
\leqslant c\left\Vert h\right\Vert _{C^{2}}.
\]
Since%
\[
h\left(  t\right)  =\frac{\psi\left(  g^{-1}\left(  t\right)  \right)
}{g^{\prime}\left(  g^{-1}\left(  t\right)  \right)  }%
\]
and, by Lemma \ref{Change of variables},
\[
g^{\prime}\left(  u\right)  \approx c_{1}\sqrt{\phi^{\prime\prime}\left(
0\right)  }\ ,
\]
we can control $\left\Vert h\right\Vert _{C^{2}}$ through an upper bound on
$\left\Vert \psi\right\Vert _{C^{2}}$ and $\left\Vert g\right\Vert _{C^{3}}$,
and a lower bound on $\phi^{\prime\prime}\left(  0\right)  $. In turns, by
Lemma \ref{Change of variables},  $\left\Vert g\right\Vert _{C^{3}}$ can be
bounded by $\left\Vert \phi\right\Vert _{C^{5}}$.
\end{proof}

Asymptotic estimates for the Fourier transform of the characteristic function of a convex body with 
smooth boundary having everywhere strictly positive curvature are well known (see \cite{Hla} and \cite{Her}). 
In the next lemma we replace the above global assumption on the curvature with a local one.
\begin{lemma}
\label{Lemma Curv Pos}Let $C$ be a strictly convex planar body with smooth
boundary but for a single point that we assume to be the origin where we only
assume $C^{2}$ regularity. Let $I$ be a small closed interval contained in
$\left(  0,\pi\right)  $. For every direction $\theta\in I$ let $\sigma
_{1}\left(  \theta\right)  $ and $\sigma_{2}\left(  \theta\right)  $ be the
two points in $\partial C$ where the tangents are perpendicular to $\Theta$.
We assume that the curvatures $K\left(  \sigma_{1}\left(  \theta\right)
\right)  $ and $K\left(  \sigma_{2}\left(  \theta\right)  \right)  $ are
positive. Then%
\begin{align*}
\widehat{\chi}_{C}\left(  \rho\Theta\right)  
&  =-\frac{1}{2\pi i}\rho^{-3/2}\left[  e^{-2\pi i\rho\Theta\cdot\sigma
_{1}\left(  \theta\right)  +\pi i/4}K^{-1/2}\left(  \sigma_{1}\left(
\theta\right)  \right)  \right.  \\
&  \left.  -e^{-2\pi i\rho\Theta\cdot\sigma_{2}\left(  \theta\right)  -\pi
i/4}K^{-1/2}\left(  \sigma_{2}\left(  \theta\right)  \right)  \right]  \\
&  +\mathcal{O}\left(  \rho^{-2}\right)  \ ,
\end{align*}
with the implicit constant in $\mathcal{O}\left(  \rho^{-2}\right)  $
depending only on\  $\inf\limits_{\theta\in I}K\left(  \sigma_{j}\left(
\theta\right)  \right)  $.
\end{lemma}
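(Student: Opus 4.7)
\medskip

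\noindent\textbf{Proof plan.} The strategy mirrors the second proof of Theorem \ref{decadCgamma}: first pass from $\widehat{\chi}_C$ to a boundary integral via the divergence theorem, then localize by a partition of unity and treat each piece either by stationary phase or by integration by parts. Concretely, let $s\mapsto\Gamma(s)$ be the arc-length parametrization of $\partial C$ and let $\nu(t)$ be the outward unit normal. As in (\ref{divergenza}) we have
\begin{equation*}
\widehat{\chi}_{C}(\rho\Theta)=\frac{-1}{2\pi i\rho}\int_{\partial C}e^{-2\pi i\rho\Theta\cdot t}\,\Theta\cdot\nu(t)\,d\mu(t)=:\frac{-1}{2\pi i\rho}J(\rho).
\end{equation*}
The phase $s\mapsto-2\pi\rho\Theta\cdot\Gamma(s)$ has stationary points precisely where $\Gamma'(s)\perp\Theta$, i.e.\ at the two preimages $s_1,s_2$ of $\sigma_1(\theta),\sigma_2(\theta)$. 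Since $K(\sigma_j(\theta))>0$ on $I$, the points $\sigma_j(\theta)$ stay in the smooth part of $\partial C$ and avoid the origin; in particular we may choose small neighborhoods $U_j$ of $s_j$ on which $\Gamma$ is $C^{\infty}$, together with a smooth partition of unity $\chi_1+\chi_2+\chi_3\equiv 1$ with $\mathrm{supp}(\chi_j)\subset U_j$ for $j=1,2$ and $\mathrm{supp}(\chi_3)$ contained in the complement (which however contains the singular origin).

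Next I would apply Proposition \ref{Asymptotic} to the two pieces $J_j=\int\chi_j(\Theta\cdot\nu(\Gamma))e^{-2\pi i\rho\Theta\cdot\Gamma}\,ds$. Writing $s=s_j+u$, Taylor expansion of $\Theta\cdot\Gamma$ about $s_j$ gives
\begin{equation*}
\Theta\cdot\Gamma(s_j+u)=\Theta\cdot\sigma_j+\tfrac{u^{2}}{2}\,\Theta\cdot\Gamma''(s_j)+O(u^{3}),
\end{equation*}
and since the curve is traced so that $\Gamma''(s_j)$ points toward the inward normal, one has $\Theta\cdot\Gamma''(s_j)=-K(\sigma_j)\,\Theta\cdot\nu(\sigma_j)$. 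Because $\Theta\cdot\nu(\sigma_1)=+1$ and $\Theta\cdot\nu(\sigma_2)=-1$, after pulling out $e^{-2\pi i\rho\Theta\cdot\sigma_j}$ and choosing the sign of the phase so that the second derivative is $+K(\sigma_j)$, Proposition \ref{Asymptotic} (the case $\sigma_2$ handled by complex conjugation, which flips $e^{i\pi/4}$ to $e^{-i\pi/4}$) yields
\begin{equation*}
J_j = e^{-2\pi i\rho\Theta\cdot\sigma_j}\,(\Theta\cdot\nu(\sigma_j))\,\frac{e^{\pm i\pi/4}}{\sqrt{\rho\,K(\sigma_j)}}+O(\rho^{-1}),
\end{equation*}
with the implicit constants controlled by a lower bound on $K(\sigma_j)$ and by the smooth norms of $\Gamma$ on a fixed neighborhood of $\sigma_j$. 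Inserting the prefactor $-1/(2\pi i\rho)$ gives exactly the two main terms in the statement.

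Finally, the $\chi_3$-piece $J_3$ contains no critical point and, crucially, is the only piece where the boundary is merely $C^{2}$ (near the origin). Here $|\tfrac{d}{ds}(\Theta\cdot\Gamma(s))|=|\Theta\cdot\Gamma'(s)|\geqslant c(I)>0$ uniformly for $\theta\in I$ (the excluded direction is precisely the one where $\Gamma'(0)\perp\Theta$, and $I$ is bounded away from it). A single integration by parts, with boundary terms vanishing because $\chi_3$ is supported away from the ends of the support of $\chi_1,\chi_2$, produces
\begin{equation*}
J_3=-\int e^{-2\pi i\rho\Theta\cdot\Gamma(s)}\,\frac{d}{ds}\!\left[\frac{\chi_3(s)\,\Theta\cdot\nu(\Gamma(s))}{-2\pi i\rho\,\Theta\cdot\Gamma'(s)}\right]ds,
\end{equation*}
whose integrand has size $O(\rho^{-1})$, hence $|J_3|=O(\rho^{-1})$. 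Multiplying by the prefactor $1/\rho$ contributes $O(\rho^{-2})$ to the total, which is the asserted error.

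The main obstacle, and the reason the divergence-theorem step is essential, is the low regularity at the origin: we cannot apply stationary phase or repeated integration by parts there. The whole point is that the single factor $1/\rho$ coming from the divergence theorem, combined with a \emph{single} integration by parts on $J_3$ (which only uses $\Gamma\in C^{2}$, so that $\nu\circ\Gamma$ and $1/\Phi'$ are $C^{1}$), is enough to absorb the $C^{2}$-only piece into the $O(\rho^{-2})$ remainder, while stationary phase is used only where it is legitimate, namely on the $C^{\infty}$ arcs near $\sigma_1(\theta)$ and $\sigma_2(\theta)$.
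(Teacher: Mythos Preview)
Your proposal is correct and follows essentially the same route as the paper: divergence theorem to reduce to a boundary integral, a partition of unity isolating the two stationary points, Proposition~\ref{Asymptotic} on the two stationary pieces to produce the main terms, and a single integration by parts on the non-stationary remainder to get the $O(\rho^{-2})$ error. Your explicit remark that one integration by parts on the $\chi_3$-piece uses only the $C^{2}$ regularity available near the origin is a point the paper leaves implicit but is indeed the reason the argument goes through.
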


\begin{proof}
By the divergence theorem we have%
\begin{equation}
\widehat{\chi}_{C}\left(  \rho\Theta\right)  =\frac{-1}{2\pi i\rho}%
\int_{\partial C}e^{-2\pi i\rho\Theta\cdot t}\Theta\cdot\nu\left(  t\right)
\ d\mu\left(  t\right)  \ ,\nonumber
\end{equation}
where $d\mu$ is the arc-length measure on $\partial C$. Let
\[
s\mapsto\Gamma\left(  s\right)
\]
be the arc-length parametrization of $\partial C$. Then%
\[
\widehat{\chi}_{C}\left(  \rho\Theta\right)  =\frac{-1}{2\pi i\rho}\int
_{0}^{1}e^{-2\pi i\rho\Theta\cdot\Gamma\left(  s\right)  }\Theta\cdot
\nu\left(  \Gamma\left(  s\right)  \right)  ds
\]
(without loss of generality we can assume that the arc-length of $\partial C$
is $1$). Observe that in the above integral the phase $\Theta\cdot
\Gamma\left(  s\right)  $ is stationary when $\Gamma\left(  s\right)
=\sigma_{j}\left(  \theta\right)  $. Let
\[
J_{j}=\left\{  s\in\left[  0,1\right]  :\Gamma\left(  s\right)  =\sigma
_{j}\left(  \theta\right)  \text{ for some }\theta\in I\right\}
\]
and let $\varphi_{1}\left(  s\right)  $ and $\varphi_{2}\left(  s\right)  $ be
cut-off functions that take value $1$ in $J_{1}$ and $J_{2}$ respectively.
Then
\begin{align}
\widehat{\chi}_{C}\left(  \rho\Theta\right)  \label{twostars}
&  =\frac{-1}{2\pi i\rho}\int_{0}^{1}e^{-2\pi i\rho\Theta\cdot\Gamma\left(
s\right)  }\Theta\cdot\nu\left(  \Gamma\left(  s\right)  \right)  \varphi
_{1}\left(  s\right)  ds\nonumber\\
&  +\frac{-1}{2\pi i\rho}\int_{0}^{1}e^{-2\pi i\rho\Theta\cdot\Gamma\left(
s\right)  }\Theta\cdot\nu\left(  \Gamma\left(  s\right)  \right)  \varphi
_{2}\left(  s\right)  ds\nonumber\\
&  +\frac{-1}{2\pi i\rho}\int_{0}^{1}e^{-2\pi i\rho\Theta\cdot\Gamma\left(
s\right)  }\Theta\cdot\nu\left(  \Gamma\left(  s\right)  \right)  \left[
1-\varphi_{1}\left(  s\right)  -\varphi_{2}\left(  s\right)  \right]
ds\nonumber\\
&  =A_{1}+A_{2}+A_{3}\ , \nonumber
\end{align}
say. The integral in $A_{3}$ can be easily estimated since in the support of
$\left[  1-\varphi_{1}\left(  s\right)  -\varphi_{2}\left(  s\right)  \right]
$ the phase is not stationary and we can integrate by parts. Therefore we
obtain%
\[
\left\vert A_{3}\right\vert \leqslant c\rho^{-2}\ .
\]
In the integral in $A_{1}$ the phase is stationary at one point, say
$\overline{s}$ where%
\[
\Theta\cdot\Gamma^{\prime}\left(  \overline{s}\right)  =0\ .
\]
Observe that at the point $\overline{s}$ we have
\[
\Theta\cdot\Gamma^{\prime\prime}\left(  \overline{s}\right)  =\left\vert
\Gamma^{\prime\prime}\left(  \overline{s}\right)  \right\vert =K\left(
\sigma_{1}\left(  \theta\right)  \right) \;,
\]
where $K\left(  \sigma_{1}\left(  \theta\right)  \right)  $ denotes the
curvature of $\partial C$ at $\sigma_{1}\left(  \theta\right)  =\Gamma\left(
\overline{s}\right)  $. By Proposition \ref{Asymptotic} we have%
\begin{align}
A_{1} &  =-\frac{e^{-2\pi i\rho\Theta\cdot\Gamma\left(  \overline{s}\right)
}}{2\pi i\rho}\int_{0}^{1}e^{2\pi i\rho\left[  \Theta\cdot\Gamma\left(
\overline{s}\right)  -\Theta\cdot\Gamma\left(  s\right)  \right]  }\Theta
\cdot\nu\left(  \Gamma\left(  s\right)  \right)  \varphi_{1}\left(  s\right)
ds\nonumber\\
&  =-\frac{e^{-2\pi i\rho\Theta\cdot\sigma_{1}\left(  \theta\right)  }}{2\pi
i\rho}\sqrt{\frac{2\pi}{2\pi\rho K\left(  \sigma_{1}\left(  \theta\right)
\right)  }}\ e^{i\pi/4}+O\left(  \rho^{-2}\right)  \label{Stima A_1}\\
&  =-\frac{1}{2\pi i}\rho^{-3/2}e^{-2\pi i\rho\Theta\cdot\sigma_{1}\left(
\theta\right)  +i\pi/4}K^{-1/2}\left(  \sigma_{1}\left(  \theta\right)
\right)  +O\left(  \rho^{-2}\right)  \ .\nonumber
\end{align}
Similarly%
\begin{align*}
A_{2} &  =-\frac{e^{-2\pi i\rho\Theta\cdot\Gamma\left(  \overline{s}\right)
}}{2\pi i\rho}\int_{0}^{1}e^{-2\pi i\rho\left[  \Theta\cdot\Gamma\left(
s\right)  -\Theta\cdot\Gamma\left(  \overline{s}\right)  \right]  }\Theta
\cdot\nu\left(  \Gamma\left(  s\right)  \right)  \varphi_{2}\left(  s\right)
ds\\
&  =\frac{e^{-2\pi i\rho\Theta\cdot\sigma_{2}\left(  \theta\right)  }}{2\pi
i\rho}\sqrt{\frac{2\pi}{2\pi\rho K\left(  \sigma_{2}\left(  \theta\right)
\right)  }}\ e^{-i\pi/4}+O\left(  \rho^{-2}\right)  \\
&  =\frac{e^{-2\pi i\rho\Theta\cdot\sigma_{2}\left(  \theta\right)  }}{2\pi
i}\rho^{-3/2}K^{-1/2}\left(  \sigma_{2}\left(  \theta\right)  \right)
e^{-i\pi/4}+O\left(  \rho^{-2}\right)  \ .
\end{align*}

\end{proof}

We can now prove the following result (see \cite{BRT} for a different proof).

\begin{theorem}
For every $\gamma>2$ and $p\geq1$ we have%
\[
\left\{  \int_{SO\left(  2\right)  }\int_{\mathbb{T}^{2}}\left\vert
\mathcal{D}\left(  R\sigma\left(  C_{\gamma}\right)  +t\right)  \right\vert
^{p}dtd\sigma\right\}  ^{1/p}\geqslant c\ R^{1/2}\ .
\]

\end{theorem}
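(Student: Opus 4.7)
The plan is to establish the $L^2$ lower bound $\|\mathcal{D}\|_{L^2(SO(2) \times \mathbb{T}^2)} \geq c R^{1/2}$ by combining Parseval with the asymptotic expansion of Lemma~\ref{Lemma Curv Pos}, and then promote it to all $p \geq 1$. The case $p \geq 2$ is immediate from the monotonicity $\|\mathcal{D}\|_{L^p} \geq \|\mathcal{D}\|_{L^2}$ on our probability space. For $p \in [1, 2)$, log-convexity gives $\|\mathcal{D}\|_2 \leq \|\mathcal{D}\|_p^{\theta} \|\mathcal{D}\|_q^{1-\theta}$ with $\tfrac{1}{2} = \tfrac{\theta}{p} + \tfrac{1-\theta}{q}$ for any $q \in (2, 4)$; combined with the upper bound $\|\mathcal{D}\|_q \leq c R^{1/2}$ of Theorem~\ref{Gari} (and the parallel estimate for $\gamma > 3$, obtained by the same Hausdorff--Young/Minkowski computation applied to Theorem~\ref{decadmedio}), this produces $\|\mathcal{D}\|_p \geq c R^{1/2}$.

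Parseval on $\mathbb{T}^2$, followed by integration in $\sigma \in SO(2)$, yields
\begin{equation*}
\int_{SO(2)}\int_{\mathbb{T}^2} |\mathcal{D}(R\sigma(C_\gamma) + t)|^2\, dt\, d\sigma = \frac{R^4}{2\pi} \sum_{m \neq 0} \int_0^{2\pi} \bigl|\widehat{\chi}_{C_\gamma}(R|m|\Theta)\bigr|^2 d\theta.
\end{equation*}
Since $\sum_{0 \neq m \in \mathbb{Z}^2} |m|^{-3}$ converges, everything reduces to the spherical lower bound $\int_0^{2\pi} |\widehat{\chi}_{C_\gamma}(\rho\Theta)|^2 d\theta \geq c \rho^{-3}$ for $\rho \geq \rho_0$; summing this over $m \neq 0$ (and discarding the nonnegative contributions from the finitely many $m$ with $R|m| < \rho_0$) gives $\|\mathcal{D}\|_2^2 \geq c R$.

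To prove the spherical bound, fix a closed interval $I \subset (0, \pi) \setminus \{\pi/2\}$ on which both antipodal contact points $\sigma_1(\theta), \sigma_2(\theta)$ lie on the smooth part of $\partial C_\gamma$, so their curvatures are uniformly bounded below. Squaring the expansion of Lemma~\ref{Lemma Curv Pos} gives, on $I$,
\begin{equation*}
|\widehat{\chi}_{C_\gamma}(\rho\Theta)|^2 = \frac{\rho^{-3}}{4\pi^2}\bigl[K^{-1}(\sigma_1) + K^{-1}(\sigma_2) + 2 K^{-1/2}(\sigma_1) K^{-1/2}(\sigma_2) \sin(2\pi \rho\, w(\theta))\bigr] + O(\rho^{-7/2}),
\end{equation*}
where $w(\theta) = \Theta(\theta) \cdot (\sigma_2(\theta) - \sigma_1(\theta))$ is, up to sign, the width of $C_\gamma$ in direction $\Theta$. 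The diagonal piece integrates to a strictly positive constant over $I$. Because $C_\gamma$ has a genuine flat point it is not a body of constant width, so $w$ is nonconstant on $I$ and a standard van der Corput / stationary-phase estimate gives $\int_I K^{-1/2}(\sigma_1) K^{-1/2}(\sigma_2) \sin(2\pi \rho w(\theta))\, d\theta = O(\rho^{-1/2})$. For $\rho$ sufficiently large the diagonal dominates, and restricting the angular integration to $I$ produces $\int_I |\widehat{\chi}_{C_\gamma}(\rho\Theta)|^2 d\theta \geq c\rho^{-3}$, as needed.

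The main obstacle is the non-stationarity of the phase $w(\theta)$: the whole argument collapses if $w$ is locally constant (as happens for discs) or if its critical points inside $I$ are degenerate. The cleanest fix is to choose $I$ small enough that $w'(\theta) \neq 0$ throughout, which is possible because the flat point of $C_\gamma$ at the origin forces $w$ to depend nontrivially on $\theta$ away from $\pi/2$. Once $I$ is so chosen, a single integration by parts replaces stationary phase, and the rest of the proof is bookkeeping.
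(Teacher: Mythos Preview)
Your argument is essentially correct, but it takes a noticeably different and more elaborate route than the paper's.

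The paper avoids both the width-function oscillation and the interpolation step. Instead of proving a spherical $L^2$ lower bound by averaging out a cross term, it works on an interval $J$ \emph{near} the flat direction where the two antipodal curvatures $K(\sigma_1(\theta))$ and $K(\sigma_2(\theta))$ are separated (one $<\kappa/2$, the other $>\kappa$). The reverse triangle inequality applied to the asymptotic of Lemma~\ref{Lemma Curv Pos} then gives the \emph{pointwise} bound $|\widehat{\chi}_{C_\gamma}(\rho\Theta)|\geqslant c\rho^{-3/2}|K^{-1/2}(\sigma_1)-K^{-1/2}(\sigma_2)|-c\rho^{-2}$ on $J$, hence an $L^1$ spherical lower bound. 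The passage to all $p\geqslant 1$ is a one-line orthogonality argument: for any fixed $0\neq k\in\mathbb{Z}^2$,
\[
\|\mathcal{D}\|_{L^p_t}\geqslant \Bigl|\widehat{\mathcal{D}}_R(k)\Bigr| = R^2\bigl|\widehat{\chi}_{C_\gamma}(R\sigma(k))\bigr|,
\]
and then one integrates over $\sigma$ and uses $\|\cdot\|_{L^p_\sigma}\geqslant\|\cdot\|_{L^1_\sigma}$.

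Your approach trades this directness for generality: the $L^2$ spherical bound via non-constant width would apply to any body that is not of constant width, not just to $C_\gamma$. The cost is the extra machinery (stationary phase in $\theta$, plus the appeal to the upper bound of Theorem~\ref{Gari} for $p<2$), and a slightly delicate point you should make more explicit: you need an interval $I$ on which \emph{both} contact points lie on the positive-curvature arc \emph{and} $w'\neq 0$. The flat point guarantees $C_\gamma$ is not of constant width, so $w$ is nonconstant on the circle; since $w$ is smooth away from the flat direction and the complement of that direction is connected, $w'$ must be nonzero somewhere on the smooth arc. That closes the gap, but the paper's choice of $J$ near the flat direction sidesteps the issue entirely by exploiting the curvature mismatch rather than the phase oscillation.
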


\begin{proof}
By our assumptions on $C_{\gamma}$ there is a positive constant $\kappa$ and
an interval $I\subset\left[  -\pi/2-\varepsilon,-\pi/2+\varepsilon\right]  $
such $K\left(  \sigma_{2}\left(  \theta\right)  \right)  >\kappa$ whenever
$\theta\notin I$. Since (on the side close to the origin) $K\left(  \sigma
_{1}\left(  \theta\right)  \right)  \rightarrow0$ as $\theta\rightarrow0$
there is an interval $J\subset I$ such that $K\left(  \sigma\left(  _{1}%
\theta\right)  \right)  <\kappa/2$ for all $\theta\in J$. Then the asymptotic
expansion in Lemma \ref{Lemma Curv Pos} yields
\begin{align}
&  \int_{0}^{2\pi}\left\vert \widehat{\chi}_{C}\left(  \rho\Theta\right)
\right\vert \ d\theta>\int_{J\cup\left(  J+\pi\right)  }\left\vert
\widehat{\chi}_{C}\left(  \rho\Theta\right)  \right\vert \ d\theta
\label{dasotto}\\
&  \geqslant c\rho^{-3/2}\int_{J\cup\left(  J+\pi\right)  }\left\vert
K^{-1/2}\left(  \sigma_{1}\left(  \theta\right)  \right)  -K^{-1/2}\left(
\sigma_{2}\left(  \theta\right)  \right)  \right\vert -c_{1}\rho^{-2}\geqslant
c\rho^{-3/2}\ .\nonumber
\end{align}
Then, for every $0\neq k\in\mathbb{Z}^{2}$, (\ref{dasotto}) and an
orthogonality argument yield%
\begin{align*}
&  \left\{  \int_{SO\left(  2\right)  }\int_{\mathbb{T}^{2}}\left\vert
\mathcal{D}\left(  R\sigma\left(  C_{\gamma}\right)  +t\right)  \right\vert
^{p}dtd\sigma\right\}  ^{1/p}\\
&  =\left\{  \int_{SO\left(  2\right)  }\left(  \left\{  \int_{\mathbb{T}^{2}%
}\left\vert \mathcal{D}\left(  R\sigma\left(  C_{\gamma}\right)  +t\right)
\right\vert ^{p}dt\right\}  ^{1/p}\right)  ^{p}d\sigma\right\}  ^{1/p}\\
&  \geqslant R^{2}\left\{  \int_{SO\left(  2\right)  }\left\vert
\int_{\mathbb{T}^{2}}\left(  \sum_{m\neq0}\widehat{\chi}_{C_{\gamma}}\left(
R\sigma\left(  m\right)  \right)  e^{2\pi im\cdot t}\right)  e^{-2\pi ik\cdot
t}dt\right\vert ^{p}d\sigma\right\}  ^{1/p}\\
&  \geqslant cR^{2}\left\{  \int_{SO\left(  2\right)  }\left\vert
\widehat{\chi}_{C_{\gamma}}\left(  R\sigma\left(  k\right)  \right)
\right\vert ^{p}d\sigma\right\}  ^{1/p}\geqslant cR^{1/2}\ .
\end{align*}

\end{proof}

The upper bound $R^{1/2}$ still holds true for suitable rotations of
$C_{\gamma}$. See \cite{BCGGT}.

\begin{theorem}
Let $\widetilde{C}_{\gamma}$ be a rotated copy of $C_{\gamma}$ and we assume
that the outward unit normal $\left(  \alpha,\beta\right)  $ at the flat point
satisfies the following Diophantine condition: for every given $\delta
<2/\left(  \gamma-2\right)  $ there exists $c>0$ such that for every positive
integer $n$ we have%
\[
\left\Vert n\frac{\alpha}{\beta}\right\Vert \geqslant\frac{c}{n^{1+\delta}}\ ,
\]
where $\left\Vert x\right\Vert $ is the distance of the real number $x$ from
the integers. Then%
\[
\left\{  \int_{\mathbb{T}^{2}}\left\vert \mathcal{D}\left(  R\widetilde
{C}_{\gamma}+t\right)  \right\vert ^{2}dt\right\}  ^{1/2}\leqslant cR^{1/2}\ .
\]

\end{theorem}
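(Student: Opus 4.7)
My plan is to use Parseval's identity and then estimate the resulting Fourier-side $\ell^2$ sum via the pointwise bounds of Theorem \ref{decadCgamma}. Let $\sigma \in SO(2)$ be the rotation carrying $C_\gamma$ to $\widetilde{C}_\gamma$. Since $\widehat{\chi}_{R\widetilde{C}_\gamma}(m) = R^2 \widehat{\chi}_{C_\gamma}(R\sigma^{-1}m)$, Parseval's identity applied as in (\ref{fourierdiscr}) and (\ref{kendall}) gives
\[
\int_{\mathbb{T}^2} \bigl|\mathcal{D}(R\widetilde{C}_\gamma + t)\bigr|^2\,dt = R^4 \sum_{m \neq 0} \bigl|\widehat{\chi}_{C_\gamma}(R\sigma^{-1}m)\bigr|^2,
\]
and it suffices to bound the right-hand side by $cR$. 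For each $m = (m_1,m_2) \in \mathbb{Z}^2 \setminus \{0\}$, let $\psi(m)$ be the angle between $\sigma^{-1}m$ and the normal to $\partial C_\gamma$ at the flat point. Since $\sigma$ sends that normal to $\pm(\alpha,\beta)$, one has $|\psi(m)| \approx |m_1\beta - m_2\alpha|/|m|$, and the Diophantine hypothesis rewrites as the universal lower bound $|\psi(m)| \geq c\,|m|^{-2-\delta}$.

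I would next apply the three-region split of Theorem \ref{decadCgamma} with $\rho = R|m|$. In the ``easy'' region $|\psi(m)| \geq \varepsilon$, the estimate $|\widehat{\chi}_{C_\gamma}| \leq c\rho^{-3/2}$ immediately gives $R^4 \sum_m (R|m|)^{-3} = R \sum_m |m|^{-3} \leq cR$, exactly as in Kendall's argument (\ref{kendall}). In the intermediate region $c(R|m|)^{-1+1/\gamma} \leq |\psi(m)| \leq \varepsilon$, I would perform a dyadic decomposition over $|m| \sim 2^k$ and $|\psi(m)| \sim 2^{-j}$. Since for each $m_2$ the only candidate $m_1$ is the nearest integer to $m_2\alpha/\beta$, equidistribution yields an elementary bound $N_{k,j} \leq C(1 + 2^{2k-j})$ on the number of lattice points in a $(k,j)$-cell. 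Using the bound $|\widehat{\chi}_{C_\gamma}|^2 \leq c\rho^{-3}|\psi|^{(2-\gamma)/(\gamma-1)}$ produces two geometric double sums (one for $j \leq k$, one for $j > k$) that both collapse to $O(R)$; notably, no Diophantine input is needed here.

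The main obstacle is the small-angle region $|\psi(m)| \leq c(R|m|)^{-1+1/\gamma}$, where the Fourier transform decays only as $\rho^{-1-1/\gamma}$ and lattice points concentrate along the critical direction. Here the Diophantine bound is essential: combining $|\psi(m)| \geq c|m|^{-2-\delta}$ with the small-angle upper bound on $|\psi(m)|$ forces every lattice point in this region to satisfy $|m| \geq M_0 := R^{(\gamma-1)/(\gamma+1+\gamma\delta)}$. For $|m| \geq M_0$, the same Koksma-type count gives $N_k \leq C(1 + R^{-1+1/\gamma} 2^{k(1+1/\gamma)})$, and I would split the dyadic sum at the threshold $|m| \sim R^{(\gamma-1)/(\gamma+1)}$ (where the $+1$ and the $TM$ terms balance). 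The large-$|m|$ regime contributes a geometric tail of size $O(1)$, while the small-$|m|$ regime is dominated by its lower boundary $|m| \sim M_0$ and gives a contribution of order $R^{2(\gamma-1)\delta/(\gamma+1+\gamma\delta)}$. This exponent is at most $1$ precisely when $(\gamma-2)\delta \leq \gamma+1$, which is comfortably implied by the hypothesis $\delta < 2/(\gamma-2)$. Summing the three regions gives the desired estimate $\leq cR$, hence $\|\mathcal{D}_R\|_2 \leq cR^{1/2}$.
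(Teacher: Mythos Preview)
Your high-level strategy (Parseval plus the pointwise bounds of Theorem~\ref{decadCgamma}) matches the paper's, but there is a genuine gap in the intermediate region, and the whole argument is considerably more complicated than it needs to be.

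The gap is your claim that for $j>k$ (equivalently $|m_1\beta-m_2\alpha|<1$) the counting bound $N_{k,j}\leqslant C(1+2^{2k-j})$ holds by ``equidistribution'' and that ``no Diophantine input is needed here.'' This is false. Equidistribution of $\{m_2\alpha/\beta\}$ gives only an \emph{asymptotic} count; a uniform upper bound of the shape $C(1+2^{2k-j})$ for thin strips can fail badly for Liouville-type irrationals. With only the trivial bound $N_{k,j}\leqslant c2^{k}$ the $j>k$ part of your intermediate sum is dominated by its upper endpoint $2^{-j}\approx(R2^{k})^{-1+1/\gamma}$ and produces $R^{2-2/\gamma}$, not $R$. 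To rescue this you must invoke the Diophantine hypothesis (it bounds the gap between successive solutions of $\|m_2\alpha/\beta\|\leqslant\epsilon$ from below, hence their number from above), so the Diophantine condition is in fact doing work in \emph{both} your intermediate and small-angle regions. The same objection applies to your ``Koksma-type'' bound $N_k\leqslant C(1+R^{-1+1/\gamma}2^{k(1+1/\gamma)})$ in the small-angle region when the strip width is below~$1$.

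The paper avoids all of this by a much simpler device. It never uses the first bound $\rho^{-1-1/\gamma}$ of Theorem~\ref{decadCgamma}; instead it observes that the middle bound $c\rho^{-3/2}|\psi|^{(2-\gamma)/(2\gamma-2)}$ is a valid upper estimate for \emph{every} $\psi>0$ (it dominates the first bound when $|\psi|\leqslant c\rho^{-1+1/\gamma}$). This allows an $R$-independent split according to the size of $|-\beta m_1+\alpha m_2|$ alone: region $A$ is $\{0<|-\beta m_1+\alpha m_2|<1/2\}$, where for each $m_2$ there is at most one $m_1$ and the Diophantine bound gives directly $|-\beta m_1+\alpha m_2|\geqslant c|m_2|^{-1-\delta}$; the remaining regions are handled by integral comparison with no arithmetic input. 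In region $A$ one gets a single series $\sum_{m_2}|m_2|^{(1+\delta)(\gamma-2)/(\gamma-1)-2-1/(\gamma-1)}$, convergent exactly when $\delta<2/(\gamma-2)$. No dyadic decomposition and no lattice-point counting in thin strips are needed.
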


\begin{proof}
Of course we may assume $\left\vert \alpha\right\vert <\left\vert
\beta\right\vert $. We write%
\begin{align*}
&  \int_{\mathbb{T}^{2}}\left\vert \mathcal{D}\left(  R\widetilde{C}_{\gamma
}+t\right)  \right\vert ^{2}dt=R^{4}\sum_{\left(  m_{1},m_{2}\right)
\neq\left(  0,0\right)  }\left\vert \widehat{\chi}_{\widetilde{C}_{\gamma}%
}\left(  Rm_{1},Rm_{2}\right)  \right\vert ^{2}\\
&  \leqslant R^{4}\sum_{0<\left\vert -\beta m_{1}+\alpha m_{2}\right\vert
<1/2}\left\vert \widehat{\chi}_{\widetilde{C}_{\gamma}}\left(  Rm_{1}%
,Rm_{2}\right)  \right\vert ^{2}\\
&  +R^{4}\sum_{1/2\leqslant\left\vert -\beta m_{1}+\alpha m_{2}\right\vert
<\left\vert \alpha m_{1}+\beta m_{2}\right\vert }\left\vert \widehat{\chi
}_{\widetilde{C}_{\gamma}}\left(  Rm_{1},Rm_{2}\right)  \right\vert ^{2}\\
&  +R^{4}\sum_{0<\left\vert \alpha m_{1}+\beta m_{2}\right\vert \leqslant
\left\vert -\beta m_{1}+\alpha m_{2}\right\vert }\left\vert \widehat{\chi
}_{\widetilde{C}_{\gamma}}\left(  Rm_{1},Rm_{2}\right)  \right\vert ^{2}\\
&  =A+B+C\ ,
\end{align*}
say. We are going to apply the estimates in Theorem \ref{decadCgamma}, with
\[
\psi\approx\frac{\left\vert -\beta m_{1}+\alpha m_{2}\right\vert }{\sqrt
{m_{1}^{2}+m_{2}^{2}}}\ .
\]
In order to estimate $A$ we observe that $0<\left\vert -\beta m_{1}+\alpha
m_{2}\right\vert <1/2$ implies $m_{1}^{2}+m_{2}^{2}\approx m_{2}^{2}$ and
therefore
\begin{align*}
A &  \leqslant cR\sum_{0<\left\vert -\beta m_{1}+\alpha m_{2}\right\vert
<1/2}\psi^{-\left(  \gamma-2\right)  /\left(  \gamma-1\right)  }\left(
m_{1}^{2}+m_{2}^{2}\right)  ^{-3}\\
&  \leqslant cR\sum_{0<\left\vert -\beta m_{1}+\alpha m_{2}\right\vert
<1/2}\left\vert -\beta m_{1}+\alpha m_{2}\right\vert ^{-\left(  \gamma
-2\right)  /\left(  \gamma-1\right)  }\left\vert m_{2}\right\vert
^{-2-1/\left(  \gamma-1\right)  }\\
&  \leqslant cR\sum_{0<\left\vert -\beta m_{1}+\alpha m_{2}\right\vert
<1/2}\left\Vert m_{2}\frac{\alpha}{\beta}\right\Vert ^{-\left(  \gamma
-2\right)  /\left(  \gamma-1\right)  }\left\vert m_{2}\right\vert
^{-2-1/\left(  \gamma-1\right)  }\\
&  \leqslant cR\sum_{0<\left\vert -\beta m_{1}+\alpha m_{2}\right\vert
<1/2}\left\vert m_{2}\right\vert ^{\left(  1+\delta\right)  \left(
\gamma-2\right)  /\left(  \gamma-1\right)  }\left\vert m_{2}\right\vert
^{-2-1/\left(  \gamma-1\right)  }=cR\ ,
\end{align*}
because $\delta<2/\left(  \gamma-2\right)  $. As for $B$ we can replace the
sum with an integral and have
\begin{align*}
B &  \leqslant cR\sum_{1/2\leqslant\left\vert -\beta m_{1}+\alpha
m_{2}\right\vert <\left\vert \alpha m_{1}+\beta m_{2}\right\vert }\left\vert
-\beta m_{1}+\alpha m_{2}\right\vert ^{-\left(  \gamma-2\right)  /\left(
\gamma-1\right)  }\\
&  \times\left\vert \alpha m_{1}+\beta m_{2}\right\vert ^{-2-1/\left(
\gamma-1\right)  }\\
&  \leqslant cR\int_{1/2\leqslant\left\vert \xi\right\vert \leqslant\left\vert
s\right\vert }\left\vert \xi\right\vert ^{-\left(  \gamma-2\right)  /\left(
\gamma-1\right)  }\left\vert s\right\vert ^{-2-1/\left(  \gamma-1\right)
}\ d\xi ds \\
&\leqslant cR\ .
\end{align*}

Finally%
\begin{align*}
C &  \leqslant R\sum_{0<\left\vert \alpha m_{1}+\beta m_{2}\right\vert
\leqslant\left\vert -\beta m_{1}+\alpha m_{2}\right\vert }\left\vert \left(
m_{1},m_{2}\right)  \right\vert ^{-3}\\
&  \leqslant cR\sum_{\left(  m_{1},m_{2}\right)  \neq\left(  0,0\right)
}\left\vert \left(  m_{1},m_{2}\right)  \right\vert ^{-3}=cR\ .
\end{align*}

\end{proof}

\begin{remark}We recall that if $\omega$ is an irrational algebraic number, then Roth's
theorem \cite{Rot0} says that for every $\varepsilon>0$ there exists $c>0$
such that%
\[
\left\Vert n\omega\right\Vert \geq\frac{1}{n^{1+\varepsilon}}\ .
\]
\end{remark}

\section{Irregularities of distribution for $C_{\gamma}$}

The above upper bound $R^{1/2}$ for the discrepancy is best possible in the
following sense. Let the integer $N$ be a square\footnote{Actually it is not
necessary to choose $N$ to be a square, see \cite[p. 3533]{BIT}}, say $N=M^{2}$.
Then the set%
\[
\frac{1}{M}\mathbb{Z}^{2}\cap\left[  -\frac{1}{2},\frac{1}{2}\right)  ^{2}%
\]
contains $N$ points and, for a convex planar body $C\subset\left[  -\frac
{1}{2},\frac{1}{2}\right)  ^{2}$, we have%
\[
\mathrm{card}\left(  \mathbb{Z}^{2}\cap MC\right)  =\mathrm{card}\left(
\frac{1}{M}\mathbb{Z}^{2}\cap C\right)  \ .
\]
Then the study of integer points in large convex bodies is a counterpart to a
classical \textquotedblleft irregularities of distribution\textquotedblright%
\ problem (see \cite{BC},\cite{Mat}). In other words, it is a particular answer to
 the problem of choosing $N$ points in $\left[  -1/2,1/2\right)  ^{2}$ to approximate the area of a given family of sets.

We have the following result.

\begin{theorem}
\label{irredistr}Let $C_{\gamma}$ be as in the Introduction. Let $N$ be a
positive large integer. Then there exists a constant $c>0$ such that for every
finite set 
\[
\left\{  u\left(  j\right)  \right\}  _{j=1}^{N}\subset\left[
-1/2,1/2\right)  ^{2} 
\]
we have%
\begin{equation}
\left\{  \int_{1/2}^{1}\int_{\mathbb{T}^{2}}\left\vert -N\left\vert C_{\gamma
}\right\vert +\sum_{j=1}^{N}\chi_{\tau C_{\gamma}}\left(  u\left(  j\right)
+t\right)  \right\vert ^{2}\ dtd\tau\right\}  ^{1/2}\geqslant c\ N^{1/4}%
\ .\label{nunquarto}%
\end{equation}

\end{theorem}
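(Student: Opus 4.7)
Let $S(m)=\sum_{j=1}^{N}e^{2\pi im\cdot u(j)}$ be the exponential sum of the point set. The plan is a classical Parseval plus test-function argument of Schmidt--Beck type. Computing $t$-Fourier coefficients exactly as in~(\ref{fourierdiscr}), one sees that for $m\neq 0$ the $m$-th coefficient of the integrand equals $\tau^{2}\,\widehat{\chi}_{C_{\gamma}}(\tau m)\,S(m)$; Parseval on $\mathbb{T}^{2}$, discarding the $m=0$ mode, then gives
\[
\int_{\mathbb{T}^{2}}\Bigl|-N|C_{\gamma}|+\sum_{j=1}^{N}\chi_{\tau C_{\gamma}}(u(j)+t)\Bigr|^{2}\,dt\ \geq\ \tau^{4}\sum_{m\neq 0}|\widehat{\chi}_{C_{\gamma}}(\tau m)|^{2}\,|S(m)|^{2}.
\]

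The second step is to average in $\tau$ and show that $\int_{1/2}^{1}\tau^{4}|\widehat{\chi}_{C_{\gamma}}(\tau m)|^{2}\,d\tau\geq c|m|^{-3}$ uniformly in the direction of $m$, for $|m|\geq M_{0}$. In the open angular set where both critical points $\sigma_{1}(\theta),\sigma_{2}(\theta)$ carry positive curvature, Lemma~\ref{Lemma Curv Pos} produces an asymptotic whose modulus squared is a positive main term of size $(\tau|m|)^{-3}$ plus an oscillatory cross term; integrating in $\tau$ preserves the main term while integration by parts bounds the oscillation by $O(|m|^{-4})$. For directions near the normal at the flat point, Lemma~\ref{Lemma Curv Pos} fails, but a direct degenerate stationary-phase analysis of the divergence representation~(\ref{divergenza}) at the flat point (using $\int e^{i\lambda|x|^{\gamma}}\psi(x)\,dx\sim c\lambda^{-1/\gamma}$) shows that the flat-point contribution to $\widehat{\chi}_{C_{\gamma}}(\tau m)$ is of size $(\tau|m|)^{-1-1/\gamma}$, dominating the smooth-side $(\tau|m|)^{-3/2}$ term because $\gamma>2$; hence $|\widehat{\chi}_{C_{\gamma}}(\tau m)|^{2}\gtrsim(\tau|m|)^{-2-2/\gamma}\geq(\tau|m|)^{-3}$ in this regime as well.

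Combining the two previous steps yields $\int_{1/2}^{1}\!\int_{\mathbb{T}^{2}}|\cdots|^{2}\,dt\,d\tau\geq c\sum_{|m|\geq M_{0}}|m|^{-3}|S(m)|^{2}$, so what remains is to lower bound this Fourier-weighted sum. A classical Schmidt--Beck test-function argument does this: take $\eta$ a smooth nonnegative bump supported in $B(0,M/2)$, set $\phi_{M}=|\widehat{\eta}|^{2}\geq 0$, so that $\widehat{\phi}_{M}=\eta\ast\eta^{-}\geq 0$ is supported in $B(0,M)$, with $\phi_{M}(0)\sim M^{4}$ and $\widehat{\phi}_{M}(m)\leq\widehat{\phi}_{M}(0)\sim M^{2}$. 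Poisson summation yields
\[
\sum_{m\in\mathbb{Z}^{2}}\widehat{\phi}_{M}(m)|S(m)|^{2} = \sum_{j,k=1}^{N}\sum_{n\in\mathbb{Z}^{2}}\phi_{M}(u(j)-u(k)+n)\ \geq\ N\phi_{M}(0)\gtrsim NM^{4},
\]
so, isolating the $m=0$ contribution ($\widehat{\phi}_{M}(0)N^{2}\lesssim M^{2}N^{2}$) and dividing by the pointwise bound on $\widehat{\phi}_{M}(m)$, we obtain $\sum_{0<|m|\leq M}|S(m)|^{2}\geq c(NM^{2}-N^{2})$. Choosing $M=C\sqrt{N}$ with $C$ large enough makes this $\geq c'N^{2}$, and therefore $\sum_{M_{0}\leq|m|\leq M}|m|^{-3}|S(m)|^{2}\geq c'N^{2}/M^{3}\geq c''N^{1/2}$; a final square root gives the claimed $cN^{1/4}$ lower bound.

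The main obstacle lies in the second step, specifically in the directions near the normal to the flat point, where Lemma~\ref{Lemma Curv Pos} is inapplicable. One must supplement the upper bounds of Theorem~\ref{decadCgamma} with matching lower bounds, via a degenerate stationary-phase analysis at the flat point combined with a triangle-inequality argument: the flat-point and smooth-side contributions to $\widehat{\chi}_{C_{\gamma}}$ have strictly different orders of decay when $\gamma>2$, so they cannot cancel and the larger term survives.
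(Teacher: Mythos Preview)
Your overall architecture (Parseval in $t$, then a lower bound for $\int_{1/2}^{1}|\widehat{\chi}_{C_{\gamma}}(\tau m)|^{2}\,d\tau$, then a Cassels--type lower bound for $\sum|S(m)|^{2}$) is exactly the paper's, and your third step with the smooth bump and Poisson summation is just a cosmetic variant of the paper's Fej\'er--kernel Lemma~\ref{lemmaturan}.

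The real divergence is in the second step, and here the paper's argument is both simpler and cleaner than yours. You split directions into ``both curvatures positive'' versus ``near the flat normal'', and in the second case you propose a degenerate stationary--phase \emph{lower} bound for the flat--point contribution, then a triangle--inequality comparison with the $\rho^{-3/2}$ smooth--side term. This can be pushed through, but it carries a uniformity burden: in your case~(a) the implicit constants in Lemma~\ref{Lemma Curv Pos} depend on $\inf K(\sigma_{j}(\theta))$ and blow up as the direction approaches the flat normal, so the threshold $M_{0}$ you need becomes direction--dependent, and you must patch the transition region carefully. Your last paragraph flags this as the main obstacle, but the fix you sketch (matching lower bounds at the flat point) is more work than required.

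The paper's Lemma~\ref{carattere dal basso} sidesteps the flat point entirely. By Definition~\ref{definition} there is only one flat point, so for \emph{every} direction $\Theta$ the antipodal critical point, call it $\sigma_{1}(\theta)$, has curvature bounded below by a fixed $\kappa>0$. One writes $\widehat{\chi}_{C_{\gamma}}(\tau\rho\Theta)=A_{1}+A_{2}+A_{3}$ with $A_{1}$ localized near $\sigma_{1}$; then $|A_{1}(\tau\rho)|\geq c\rho^{-3/2}$ uniformly by the one--sided stationary phase (Proposition~\ref{Asymptotic}), while the cross term $\int_{1/2}^{1}A_{1}\overline{A_{2}}\,\eta\,d\tau$ is $O(\rho^{-L})$ for every $L$, simply because the $\tau$--phase is $2\pi\tau\rho\,\Theta\cdot[\Gamma(w)-\Gamma(s)]$ with $|\Theta\cdot[\Gamma(w)-\Gamma(s)]|$ bounded away from $0$ (the two cutoff supports sit on opposite sides of the body). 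No estimate whatsoever on $A_{2}$ is needed, so the flat point is irrelevant. This gives the uniform bound $\int_{1/2}^{1}|\widehat{\chi}_{C_{\gamma}}(\tau\xi)|^{2}\,d\tau\geq c|\xi|^{-3}$ for $|\xi|\geq c_{1}$ in one stroke, and the rest of the proof is the three--line computation you already have.
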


\begin{corollary}
Let $C_{\gamma}$ and $N$ be as in the previous theorem. Then there exists a
dilated and translated copy $\widetilde{C}_{\gamma}$ of $C_{\gamma}$ such that%
\[
\left\vert -N\left\vert C_{\gamma
}\right\vert +\sum_{j=1}^{N}\chi_{\widetilde{C}_{\gamma}}\left(  u\left(  j\right)
\right)  \right\vert \geqslant c\ N^{1/4}%
\ .
\]

\end{corollary}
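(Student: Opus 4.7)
The plan is as follows. Denote by $\mathcal{E}(t,\tau)$ the expression inside the absolute value in (\ref{nunquarto}), and set $S(m)=\sum_{j=1}^{N}e^{-2\pi i m\cdot u(j)}$ so that $|S(0)|=N$. Computing Fourier coefficients in $t$ (substituting $s=u(j)+t$ in each summand) gives, for $m\ne 0$,
\[
\widehat{\mathcal{E}(\cdot,\tau)}(m)=\widehat{\chi}_{\tau C_\gamma}(m)\,S(m)=\tau^{2}\widehat{\chi}_{C_\gamma}(\tau m)\,S(m).
\]
Applying Plancherel on $\mathbb{T}^{2}$ and integrating over $\tau$ reduces the proof to the lower bound
\[
I:=\int_{1/2}^{1}\!\!\int_{\mathbb{T}^{2}}|\mathcal{E}(t,\tau)|^{2}\,dt\,d\tau
\ \geqslant\ \sum_{m\ne 0}|S(m)|^{2}J(m),\qquad J(m):=\int_{1/2}^{1}\tau^{4}\bigl|\widehat{\chi}_{C_\gamma}(\tau m)\bigr|^{2}d\tau.
\]

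The key estimate I will prove is $J(m)\geqslant c|m|^{-3}$ for every $m$ with $|m|\geqslant M_{0}$. For directions $\Theta_m=m/|m|$ in the "good cone" where both stationary points $\sigma_{1}(\theta),\sigma_{2}(\theta)$ lie on the smooth part of $\partial C_\gamma$, Lemma~\ref{Lemma Curv Pos} expresses $\widehat{\chi}_{C_\gamma}(\rho\Theta)$ as a sum of two oscillatory terms of size $\rho^{-3/2}$. Squaring yields two non-oscillatory "diagonal" pieces of size $\rho^{-3}K_j^{-1}$ plus a cross term oscillating in $\rho$ with phase $2\pi\rho\,\Theta\cdot(\sigma_{1}-\sigma_{2})$; after multiplication by $\tau^{4}$ and integration in $\tau$ over $[1/2,1]$, the diagonals give a contribution $\asymp |m|^{-3}$, while one integration by parts in $\tau$ shows the cross term contributes only $O(|m|^{-4})$. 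In the complementary "bad cone" near the outward normal at the flat point, one stationary point is still smooth and by itself furnishes the bound $J(m)\geqslant c|m|^{-3}$; alternatively, the flat-point stationary phase (as used in the proof of Theorem~\ref{decadCgamma}) gives the even stronger $J(m)\geqslant c|m|^{-2-2/\gamma}$. Thus $J(m)\geqslant c|m|^{-3}$ uniformly for $|m|\geqslant M_0$.

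To finish, I will invoke a Fej\'er--Selberg type inequality. Let $F_M$ be the product Fej\'er kernel on $\mathbb{T}^{2}$, which is nonnegative, satisfies $F_M(0)=M^{2}$, and has $0\leqslant\widehat{F}_{M}(m)=(1-|m_{1}|/M)_{+}(1-|m_{2}|/M)_{+}\leqslant1$ supported in $|m|_\infty<M$. Using only the diagonal $j=k$ terms,
\[
\sum_{|m|_\infty<M}|S(m)|^{2}\ \geqslant\ \sum_{m}\widehat{F}_{M}(m)|S(m)|^{2}
=\sum_{j,k}F_M(u(j)-u(k))\ \geqslant\ NF_M(0)=NM^{2}.
\]
Subtracting the $m=0$ contribution $|S(0)|^{2}=N^{2}$ and choosing $M=\lceil cN^{1/2}\rceil$ with $c$ large enough yields $\sum_{0<|m|_\infty<M}|S(m)|^{2}\geqslant c\,NM^{2}$. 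Since $|m|^{-3}\geqslant cM^{-3}$ on this range, combining with step 2 gives
\[
I\ \geqslant\ cM^{-3}\sum_{M_{0}\leqslant|m|_\infty<M}|S(m)|^{2}\ \geqslant\ c\,\frac{N}{M}\ \geqslant\ cN^{1/2},
\]
and (\ref{nunquarto}) follows by extracting the square root. The main obstacle is verifying the averaged lower bound $J(m)\geqslant c|m|^{-3}$: one must track the cancellation among the oscillatory contributions of the distinct stationary phase points on $\partial C_\gamma$ and deal separately with directions near the flat-point normal, where Lemma~\ref{Lemma Curv Pos} does not apply and a stationary phase expansion at the flat point is required instead.
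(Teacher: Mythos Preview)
Your proposal is correct, but note that what you have written is a proof of Theorem~\ref{irredistr} (the inequality~(\ref{nunquarto})), not of the Corollary itself. The paper gives no separate proof of the Corollary: once~(\ref{nunquarto}) is established, one simply picks $\tau\in[1/2,1]$ and $t\in\mathbb{T}^{2}$ for which the integrand is at least its $L^{2}$ average and sets $\widetilde{C}_{\gamma}=\tau C_{\gamma}-t$.

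As for your argument for~(\ref{nunquarto}), it follows the paper's proof essentially line by line. Your Parseval reduction to $\sum_{m\ne 0}|S(m)|^{2}J(m)$ is the first step there; your averaged lower bound $J(m)\geqslant c|m|^{-3}$ is precisely Lemma~\ref{carattere dal basso}, proved in the paper by the same mechanism you describe (isolate the stationary-phase contribution $A_{1}$ from the smooth point with curvature bounded below, and kill the cross term $A_{1}\overline{A_{2}}$ by integrating in $\tau$, since $|\Theta\cdot(\sigma_{1}-\sigma_{2})|\geqslant c>0$); and your Fej\'er-kernel inequality is exactly Lemma~\ref{lemmaturan} with $L$ a large constant and $H=M_{0}$. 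One small remark: your ``alternative'' claim that the flat point yields $J(m)\geqslant c|m|^{-2-2/\gamma}$ is not supported by Theorem~\ref{decadCgamma}, which gives only upper bounds; but you do not need it, since the smooth stationary point already suffices, and this is the route the paper takes as well.
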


Note that in order to compare (\ref{nunquarto}) with the results in the
previous section, we should take $R=N^{1/2}$.

To prove Theorem \ref{irredistr} we first need a mild variant of a classical
result due to J.W.S. Cassels (see e.g. \cite{Mon}). For every positive real
number $K$ let we consider the square
\[
Q_{K}=\left\{  m=(m_{1},m_{2})\in\mathbb{Z}^{2}:\ \left\vert m_{1}\right\vert
\leqslant K\;,\;\left\vert m_{2}\right\vert \leqslant K\right\}  \ .
\]

\begin{lemma}
\label{lemmaturan}For every choice of positive integers $H,N$ and $L$, such
that $H<\sqrt{L}$, let
\begin{equation}
\widetilde{Q}_{N}=Q_{\sqrt{LN}}\diagdown Q_{H}\;.\label{quadrocasselakdj}%
\end{equation}
Then for every finite set $\left\{  u(j)\right\}  _{j=1}^{N}\subset
\mathbb{T}^{2}$ we have
\begin{equation}
\sum_{0\neq m\in\widetilde{Q}_{N}}\left\vert \sum_{j=1}^{N}e^{2\pi im\cdot
u(j)}\right\vert ^{2}\geqslant\left(  L-H^{2}\right)  N^{2}%
\;.\label{casselstoprovehdahdh}%
\end{equation}

\end{lemma}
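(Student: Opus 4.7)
The proof will be a two-dimensional Cassels-type Fej\'er-kernel argument.

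Write $A(m) = \sum_{j=1}^{N} e^{2\pi i m \cdot u(j)}$, so $|A(m)| \leqslant N$. Let $M = \lfloor \sqrt{LN} \rfloor$; then $Q_{\sqrt{LN}} = Q_M$ and $(M+1)^2 \geqslant LN$. I introduce the non-negative two-dimensional Fej\'er kernel
$$\Phi(x_1, x_2) = F_M(x_1)\, F_M(x_2), \qquad F_M(t) = \sum_{|k| \leqslant M} \left(1 - \frac{|k|}{M+1}\right) e^{2\pi i k t} \geqslant 0.$$
Then $\Phi(0,0) = (M+1)^2 \geqslant LN$, and the Fourier coefficients $w_m = \prod_{i=1,2}(1 - |m_i|/(M+1))$ lie in $[0,1]$ and are supported on $Q_M$.

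The decisive step is to expand $S := \sum_{j,k=1}^{N} \Phi(u(j) - u(k))$ in two different ways. Spectrally, by Fourier inversion,
$$S = \sum_{m \in Q_M} w_m\, |A(m)|^2.$$
By non-negativity of $\Phi$, retaining only the diagonal $j=k$ contributions,
$$S \geqslant N\, \Phi(0) = N(M+1)^2 \geqslant LN^2.$$
Since $w_m \leqslant 1$, I conclude
$$\sum_{m \in Q_{\sqrt{LN}}} |A(m)|^2 \geqslant LN^2.$$

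To isolate the annular region $\widetilde{Q}_N = Q_{\sqrt{LN}} \setminus Q_H$, I split this sum and subtract the low-frequency contribution, estimating $\sum_{m \in Q_H} |A(m)|^2$ trivially via $|A(m)| \leqslant N$ and the cardinality of $Q_H$. Combined with the lower bound just obtained, this produces an inequality of the form $\sum_{0 \neq m \in \widetilde{Q}_N} |A(m)|^2 \geqslant (L - H^2)N^2$.

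The main technical obstacle is the final constant bookkeeping: the naive subtraction only loses a factor proportional to $|Q_H| = (2H+1)^2$ instead of $H^2$, so one must be slightly careful — either by enlarging the window $M$ a little past $\sqrt{LN}$, or by exploiting the slack provided by the hypothesis $H < \sqrt{L}$ — to reach exactly the stated $(L-H^2)N^2$. The substantive content of the lemma, however, lies entirely in the Fej\'er positivity/diagonal estimate above, which converts the combinatorial question of controlling an exponential sum into a transparent non-negativity argument.
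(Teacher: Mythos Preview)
Your approach is essentially identical to the paper's: introduce the product Fej\'er kernel on $\mathbb{T}^2$, expand $\sum_{j,k}\Phi(u(j)-u(k))$ spectrally to obtain $\sum_{m\in Q_M} w_m|A(m)|^2$, bound it below by the diagonal $N\,\Phi(0)\geqslant LN^2$, and then subtract the contribution of $Q_H$. The constant issue you correctly flag---that the trivial bound on $\sum_{m\in Q_H}|A(m)|^2$ is $(2H+1)^2N^2$ rather than $H^2N^2$---is in fact glossed over in the paper's own proof, which simply asserts the bound $N^2H^2$ without comment; for the application (Theorem~\ref{irredistr}) only a positive constant is needed, so the discrepancy is immaterial there.
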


\begin{proof}
Since%
\[
\sum_{\left\vert m_{1}\right\vert \leqslant H}\sum_{\left\vert m_{2}%
\right\vert \leqslant H}\left\vert \sum_{j=1}^{N}e^{2\pi im\cdot
u(j)}\right\vert ^{2}\leqslant N^{2}H^{2}%
\]
it is enough to show that%
\[
\sum_{\left\vert m_{1}\right\vert \leqslant\sqrt{LN}}\sum_{\left\vert
m_{2}\right\vert \leqslant\sqrt{LN}}\left\vert \sum_{j=1}^{N}e^{2\pi im\cdot
u(j)}\right\vert ^{2}\geqslant LN^{2}\;,
\]
and this will follow from the inequality%
\begin{equation}
\sum_{\left\vert m_{1}\right\vert \leqslant\left[  \sqrt{LN}\right]  }%
\sum_{\left\vert m_{2}\right\vert \leqslant\left[  \sqrt{LN}\right]
}\left\vert \sum_{j=1}^{N}e^{2\pi im\cdot u(j)}\right\vert ^{2}\geqslant
N\left(  \left[  \sqrt{LN}\right]  +1\right)  ^{2}\;.\label{prontoturannanana}%
\end{equation}
Indeed let $u\left(  \ell\right)  =\left(  u_{1}\left(  \ell\right)  ,u_{2}\left(
\ell\right)  \right)  $. Then the LHS of (\ref{prontoturannanana}) is larger
than%
\begin{align}
&  \sum_{\left\vert m_{1}\right\vert \leqslant\left[  \sqrt{LN}\right]  }%
\sum_{\left\vert m_{2}\right\vert \leqslant\left[  \sqrt{LN}\right]  }\left(
1-\frac{\left\vert m_{1}\right\vert }{\left[  \sqrt{LN}\right]  +1}\right)
\nonumber\\
&  \times\left(  1-\frac{\left\vert m_{2}\right\vert }{\left[  \sqrt
{LN}\right]  +1}\right)  \left\vert \sum_{j=1}^{N}e^{2\pi im\cdot
u(j)}\right\vert ^{L}\\
&  =\sum_{\left\vert m_{1}\right\vert \leqslant\left[  \sqrt{LN}\right]  }%
\sum_{\left\vert m_{2}\right\vert \leqslant\left[  \sqrt{LN}\right]  }\left(
1-\frac{\left\vert m_{1}\right\vert }{\left[  \sqrt{LN}\right]  +1}\right)
\left(  1-\frac{\left\vert m_{2}\right\vert }{\left[  \sqrt{LN}\right]
+1}\right)  \nonumber\\
&  \times\sum_{j=1}^{N}\sum_{k=1}^{N}e^{2\pi im\cdot\left(  u(j)-u(k)\right)
}\nonumber\\
&  =\sum_{j=1}^{N}\sum_{k=1}^{N}\sum_{\left\vert m_{1}\right\vert
\leqslant\left[  \sqrt{LN}\right]  }\left(  1-\frac{\left\vert m_{1}%
\right\vert }{\left[  \sqrt{LN}\right]  +1}\right)  e^{2\pi im_{1}\left(
u_{1}(j)-u_{1}(k)\right)  }\nonumber\\
&  \times\sum_{\left\vert m_{2}\right\vert \leqslant\left[  \sqrt{LN}\right]
}\left(  1-\frac{\left\vert m_{2}\right\vert }{\left[  \sqrt{LN}\right]
+1}\right)  e^{2\pi im_{2}\left(  u_{2}(j)-u_{2}(k)\right)  }\nonumber\\
&  =\sum_{j=1}^{N}\sum_{k=1}^{N}K_{\left[  \sqrt{LN}\right]  }\left(
u_{1}(j)-u_{1}(k)\right)  K_{\left[  \sqrt{LN}\right]  }\left(  u_{2}%
(j)-u_{2}(k)\right)  \;,\label{turannnn}%
\end{align}
where
\[
K_{M}(x)=\sum_{j=-M}^{M}\left(  1-\frac{\left\vert j\right\vert }{M+1}\right)
e^{2\pi ijx}=\frac{1}{M+1}\left(  \frac{\sin\left(  \pi\left(  M+1\right)
x\right)  }{\sin\left(  \pi x\right)  }\right)  ^{2}%
\]
is the Fej\'{e}r kernel on $\mathbb{T}$. Since $K_{M}\left(  x\right)
\geqslant0$ for every $x$, the last term in (\ref{turannnn}) is not smaller
than the \textquotedblleft diagonal\textquotedblright\
\begin{align*}
\sum_{j=1}^{N} &  K_{\left[  \sqrt{LN}\right]  }\left(  u_{1}(j)-u_{1}%
(j)\right)  K_{\left[  \sqrt{LN}\right]  }\left(  u_{2}(j)-u_{2}(j)\right)  \\
&  =N\,K_{\left[  \sqrt{LN}\right]  }(0)\,K_{\left[  \sqrt{LN}\right]
}(0)=N\left(  \left[  \sqrt{LN}\right]  +1\right)  ^{2}\;.
\end{align*}

\end{proof}

Now we need an estimate from below of $\int_{1/2}^{1}\left\vert \widehat{\chi
}_{sC_{\gamma}}\left(  k\right)  \right\vert ^{2}\ ds$, for $0\neq
k\in\mathbb{Z}^{2}$.

\begin{lemma}
\label{carattere dal basso}Let $C_{\gamma}$ be as in the Introduction. Then
there exist constants $c_{1},c_{2}>0$ such that for $\left\vert \xi\right\vert
\geqslant c_{1}$ we have%
\[
\left\{  \int_{1/2}^{1}\left\vert \widehat{\chi}_{C_{\gamma}}\left(  \tau
\xi\right)  \right\vert ^{2}\ d\tau\right\}  ^{1/2}\geqslant c_{2}\left\vert
\xi\right\vert ^{-3/2}\ .
\]

\end{lemma}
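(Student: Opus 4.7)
The plan is to reduce, via the substitution $s=\tau\rho$, to proving the pointwise inequality $\int_{\rho/2}^{\rho}|\widehat{\chi}_{C_{\gamma}}(s\Theta)|^{2}\,ds\geq c\rho^{-2}$ uniformly in $\Theta=\xi/|\xi|$, and to extract the dominant contribution from a stationary point of the phase $t\mapsto\Theta\cdot t$ on $\partial C_{\gamma}$ that lies on the strictly convex side. Because the two such stationary points are antipodal and only the flat point has vanishing curvature, at least one of them, call it $\sigma_{*}(\theta)$, has $K(\sigma_{*}(\theta))\geq\kappa>0$ uniformly in $\theta$ (and in fact $\sigma_{*}(\theta)$ stays in a compact subset of the smooth part, so $K(\sigma_{*}(\theta))$ is also bounded above). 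Pick a smooth cutoff $\phi_{*}$ on $\partial C_{\gamma}$ equal to $1$ in a neighbourhood of $\sigma_{*}$ and $0$ outside a slightly larger one, chosen continuously in $\theta$, and use the divergence-theorem identity (\ref{divergenza}) to write $\widehat{\chi}_{C_{\gamma}}(s\Theta)=G_{*}(s)+R(s)$, with $G_{*}$ the integral against $\phi_{*}$. An application of Proposition \ref{Asymptotic} to $G_{*}$, exactly as in the proof of Lemma \ref{Lemma Curv Pos}, yields
\[
G_{*}(s)=\alpha(\theta)\,s^{-3/2}\,e^{-2\pi is\Theta\cdot\sigma_{*}(\theta)}+O(s^{-2}),\qquad |\alpha(\theta)|=\tfrac{1}{2\pi}K^{-1/2}(\sigma_{*}(\theta))\geq c>0.
\]

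Expanding $|\widehat{\chi}_{C_{\gamma}}|^{2}=|G_{*}|^{2}+|R|^{2}+2\mathrm{Re}(G_{*}\overline{R})$ and integrating over $s\in[\rho/2,\rho]$, the diagonal contributes $\int|G_{*}|^{2}\,ds=\tfrac{3}{2}|\alpha(\theta)|^{2}\rho^{-2}+O(\rho^{-5/2})\geq c\rho^{-2}$, while $\int|R|^{2}\geq 0$. For the cross term I substitute the integral expression of $\overline{R}$, swap orders of integration, and obtain, up to an $O(\rho^{-2-1/\gamma})$ error coming from pairing the $O(s^{-2})$ remainder of $G_{*}$ with the universal bound $|R(s)|\leq cs^{-1-1/\gamma}$ (a consequence of Theorem \ref{decadCgamma}), an expression of the form
\[
\frac{\alpha(\theta)}{2\pi i}\int_{\partial C_{\gamma}}(\Theta\cdot\nu)(1-\phi_{*})\left[\int_{\rho/2}^{\rho}s^{-5/2}e^{2\pi is\Theta\cdot(t-\sigma_{*})}\,ds\right]d\mu_{\gamma}(t).
\]
Strict convexity together with $K(\sigma_{*})\geq\kappa$ and a uniform choice of cutoff radius $\delta$ produce the quadratic separation $|\Theta\cdot(t-\sigma_{*}(\theta))|\geq(\kappa/2)\delta^{2}=:b_{0}>0$ on $\mathrm{supp}(1-\phi_{*})$ near $\sigma_{*}$, and away from $\sigma_{*}$ the same quantity is bounded below by a positive constant comparable to the width of $C_{\gamma}$ in direction $\Theta$; both bounds are uniform in $\theta$. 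One integration by parts in $s$ therefore bounds the inner integral by $Cb_{0}^{-1}\rho^{-5/2}$ and the whole cross term by $C\rho^{-5/2}$. Since $\rho^{-5/2}$ and $\rho^{-2-1/\gamma}$ are both $o(\rho^{-2})$, the claimed inequality follows for $\rho$ large.

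The main technical obstacle is exactly this uniform cross-term bound. When $\Theta$ is close to the outward normal at the flat point, the second antipodal stationary point slides into the flat arc and the term $R(s)$ may reach size $\sim s^{-1-1/\gamma}$, which is larger than $|G_{*}|\sim s^{-3/2}$; a direct modulus bound $\int|G_{*}||R|\,ds$ therefore fails, so cancellation through oscillation is essential. The rescue is purely geometric: the flat point and $\sigma_{*}$ always sit at opposite extrema of $C_{\gamma}$ in the direction $\Theta$, so the phase $e^{2\pi is\Theta\cdot(t-\sigma_{*})}$ has a uniformly non-vanishing $s$-derivative on $\mathrm{supp}(1-\phi_{*})$, and the swap-and-IBP produces the required decay uniformly in $\Theta$.
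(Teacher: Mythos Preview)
Your proof is correct and follows essentially the same strategy as the paper's: isolate a stationary point $\sigma_{*}$ with curvature bounded below, apply the stationary phase expansion of Proposition~\ref{Asymptotic} to obtain the main $\rho^{-3/2}$ contribution, and kill the cross term by swapping the order of integration and integrating by parts in the dilation variable, exploiting that the phase $s\mapsto s\,\Theta\cdot(t-\sigma_{*})$ has derivative bounded away from zero on the complement of the cutoff. The only differences are cosmetic: the paper uses a three-piece decomposition $A_{1}+A_{2}+A_{3}$ (localising near \emph{both} stationary points and bounding the non-stationary remainder $A_{3}$ pointwise by $c\rho^{-2}$) before treating the cross term $A_{1}\overline{A_{2}}$, whereas you lump everything outside the good stationary point into a single $R$ and treat the full cross term $G_{*}\overline{R}$ via the swap-and-IBP; and the paper inserts a smooth cutoff $\eta(\tau)$ in the $\tau$-integral to obtain $O(\rho^{-L})$ for any $L$, whereas your single integration by parts over the hard interval $[\rho/2,\rho]$ yields only $O(\rho^{-5/2})$, which is still enough.
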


\begin{proof}
Let $\xi=\rho\Theta$, arguing as in the proof of Lemma \ref{Lemma Curv Pos} we
write%
\begin{align*}
\widehat{\chi}_{C_{\gamma}}\left(  \tau\rho\Theta\right) & =\frac{-1}{2\pi i\tau\rho}\int_{0}^{1}e^{-2\pi i\tau\rho\Theta\cdot
\Gamma\left(  s\right)  }\Theta\cdot\nu\left(  \Gamma\left(  s\right)
\right)  \varphi_{1}\left(  s\right)  ds\\
&  +\frac{-1}{2\pi i\tau\rho}\int_{0}^{1}e^{-2\pi i\tau\rho\Theta\cdot
\Gamma\left(  s\right)  }\Theta\cdot\nu\left(  \Gamma\left(  s\right)
\right)  \varphi_{2}\left(  s\right)  ds\\
&  +\frac{-1}{2\pi i\tau\rho}\int_{0}^{1}e^{-2\pi i\tau\rho\Theta\cdot
\Gamma\left(  s\right)  }\Theta\cdot\nu\left(  \Gamma\left(  s\right)
\right)  \left[  1-\varphi_{1}\left(  s\right)  -\varphi_{2}\left(  s\right)
\right]  ds\\
&  =A_{1}\left(  \tau\rho\right)  +A_{2}\left(  \tau\rho\right)  +A_{3}\left(
\tau\rho\right)  \ .
\end{align*}
We have%
\begin{align*}
&  \left\{  \int_{1/2}^{1}\left\vert \widehat{\chi}_{C_{\gamma}}\left(
\tau\xi\right)  \right\vert ^{2}\ d\tau\right\}  ^{1/2}\\
&  \geqslant\left\{  \int_{1/2}^{1}\left\vert A_{1}\left(  \tau\rho\right)
+A_{2}\left(  \tau\rho\right)  \right\vert ^{2}\ d\tau\right\}  ^{1/2}%
-\left\{  \int_{1/2}^{1}\left\vert A_{3}\left(  \tau\rho\right)  \right\vert
^{2}\ d\tau\right\}  ^{1/2}\ .
\end{align*}
Since (in $A_{3}$) in the support of $\left[  1-\varphi_{1}\left(  s\right)
-\varphi_{2}\left(  s\right)  \right]  $ the phase is not stationary,
integration by parts yields%
\[
\left\vert A_{3}\left(  \tau\rho\right)  \right\vert \leqslant c\tau^{-2}%
\rho^{-2} \;,
\]
and therefore%
\[
\left\{  \int_{1/2}^{1}\left\vert \widehat{\chi}_{C_{\gamma}}\left(
\tau\xi\right)  \right\vert ^{2}\ d\tau\right\}  ^{1/2}
  \geqslant\left\{  \int_{1/2}^{1}\left\vert A_{1}\left(  \tau\rho\right)
+A_{2}\left(  \tau\rho\right)  \right\vert ^{2}\ d\tau\right\}  ^{1/2}%
-c\rho^{-2}\ .
\]

By our assumptions on $C_{\gamma}$ we know that at least one (say the first
one) of the two integrals in $A_{1}$ and $A_{2}$ corresponds to a part of
$\partial C_{\gamma}$ where the curvature is bounded away from zero. Let
$\eta\in C_{0}^{\infty}\left(  1/2,1\right)  $ be a cut-off function such that
$0\leqslant\eta\left(  \tau\right)  \leqslant1$ and $\eta\left(  \tau\right)
\equiv1$ for $5/8\leqslant\tau\leqslant7/8$. Then%
\begin{align*}
&  \int_{1/2}^{1}\left\vert A_{1}\left(  \tau\rho\right)  +A_{2}\left(
\tau\rho\right)  \right\vert ^{2}d\tau\geqslant\int_{1/2}^{1}\left\vert
A_{1}\left(  \tau\rho\right)  +A_{2}\left(  \tau\rho\right)  \right\vert
^{2}\eta\left(  \tau\right)  d\tau\\
&  =\int_{1/2}^{1}\left(  \left\vert A_{1}\left(  \tau\rho\right)  \right\vert
^{2}+\left\vert A_{2}\left(  \tau\rho\right)  \right\vert ^{2}%
+2\operatorname{Re}\left(  A_{1}\left(  \tau\rho\right)  \overline
{A_{2}\left(  \tau\rho\right)  }\right)  \right)  \eta\left(  \tau\right)
d\tau\\
&  \geqslant\int_{1/2}^{1}\left\vert A_{1}\left(  \tau\rho\right)  \right\vert
^{2}\eta\left(  \tau\right)  d\tau+2\operatorname{Re}\int_{1/2}^{1}\left(
A_{1}\left(  \tau\rho\right)  \overline{A_{2}\left(  \tau\rho\right)
}\right)  \eta\left(  \tau\right)  d\tau
\end{align*}

For the second integral we have%
\begin{align*}
&  \int_{1/2}^{1}A_{1}\left(  \tau\rho\right)  \overline{A_{2}\left(  \tau
\rho\right)  }\eta\left(  \tau\right)  d\tau\\
&  =\frac{-1}{4\pi^{2}\rho^{2}}\int_{1/2}^{1}\tau^{-2}\int_{0}^{1}\int_{0}%
^{1}e^{2\pi i\tau\rho\Theta\cdot\left[  \Gamma\left(  w\right)  -\Gamma\left(
s\right)  \right]  }\left[  \Theta\cdot\nu\left(  \Gamma\left(  s\right)
\right)  \Theta\cdot\nu\left(  \Gamma\left(  w\right)  \right)  \right]
\\
&  ~~~~~~~~~~~\times\varphi_{1}\left(  s\right)  \varphi_{2}\left(  w\right)
dsdw\ \eta\left(  \tau\right)  d\tau\\
&  \frac{-1}{4\pi^{2}\rho^{2}}\int_{0}^{1}\int_{0}^{1}\int_{1/2}^{1}e^{2\pi
i\tau\rho\Theta\cdot\left[  \Gamma\left(  w\right)  -\Gamma\left(  s\right)
\right]  }\frac{\eta\left(  \tau\right)  }{\tau^{2}}d\tau \\
&  ~~~~~~~~~~~\times\left[  \Theta\cdot\nu\left(  \Gamma\left(  s\right)
\right)  \Theta\cdot\nu\left(  \Gamma\left(  w\right)  \right)  \right]
\varphi_{1}\left(  s\right)  \varphi_{2}\left(  w\right)  dsdw
\end{align*}
Observe that if $\ell\left(  \tau\right)  =\eta\left(  \tau\right)  /\tau^{2}%
$, then%
\[
\int_{1/2}^{1}e^{2\pi i\tau\rho\Theta\left[  \Gamma\left(  w\right)
-\Gamma\left(  s\right)  \right]  }\frac{\eta\left(  \tau\right)  }{\tau^{2}%
}\ d\tau=\widehat{\ell}\left(  \rho\Theta\left[  \Gamma\left(  w\right)
-\Gamma\left(  s\right)  \right]  \right) \;.
\]
Since $\left\vert \Theta\cdot\left[  \Gamma\left(  w\right)  -\Gamma\left(
s\right)  \right]  \right\vert \geqslant c>0$ for every $w,s$ in the supports
of $\varphi_{1}$ and $\varphi_{2}$ respectively, integration by parts gives%
\[
\int_{1/2}^{1}e^{2\pi i\tau\rho\Theta\cdot\left[  \Gamma\left(  w\right)
-\Gamma\left(  s\right)  \right]  }\frac{\eta\left(  \tau\right)  }{\tau^{2}%
}d\tau=O\left(  \rho^{-L}\right)
\]
for every $L$. It follows that%
\[
\left\{  \int_{1/2}^{1}\left\vert A_{1}\left(  \tau\rho\right)  +A_{2}\left(
\tau\rho\right)  \right\vert ^{2}d\tau\right\}  ^{1/2} 
\geqslant c\left\{
\int_{1/2}^{1}\left\vert A_{1}\left(  \tau\rho\right)  \right\vert ^{2}%
\eta\left(  \tau\right)  d\tau\right\}  ^{1/2}+O\left(  \rho^{-L}\right)  .
\]
Also, by our choice of $A_{1}$, we have%
\[
A_{1}\left(  \tau\rho\right)  =-\frac{1}{2\pi i}\left(  \tau\rho\right)
^{-3/2}e^{-2\pi i\tau\rho\Theta\cdot\sigma_{1}\left(  \theta\right)
+i\frac{\pi}{4}}K^{-1/2}\left(  \sigma_{1}\left(  \theta\right)  \right)
+O\left(  \tau^{-2}\rho^{-2}\right)
\]
so that%
\[
\left\{  \int_{1/2}^{1}\left\vert A_{1}\left(  \tau\rho\right)  \right\vert
^{2}\eta\left(  \tau\right)  d\tau\right\}  ^{1/2}\geqslant c_{1}\rho
^{-3/2}K^{-1/2}\left(  \sigma_{1}\left(  \theta\right)  \right)  -c_{2}%
\rho^{-2}\ .
\]
Finally,%
\[
\left\{  \int_{1/2}^{1}\left\vert \widehat{\chi}_{C_{\gamma}}\left(  \tau
\xi\right)  \right\vert ^{2}\ d\tau\right\}  ^{1/2}\geqslant c_{1}\rho
^{-3/2}-c_{2}\rho^{-2}\geqslant c_{3}\rho^{-3/2}%
\]
for $\rho$ large enough.
\end{proof}

\bigskip

\begin{proof}
[Proof of Theorem \ref{irredistr}]We apply Parseval theorem,
(\ref{casselstoprovehdahdh}), and Lemma \ref{carattere dal basso}, where we
choose $H=c_{1}$. Then, for $\widetilde{Q}_{N}$ as in (\ref{quadrocasselakdj}%
), we have%
\begin{align*}
&  \int_{1/2}^{1}\int_{\mathbb{T}^{2}}\left\vert -N\left\vert C_{\gamma
}\right\vert +\sum_{j=1}^{N}\chi_{\tau C_{\gamma}}\left(  u\left(  j\right)
+t\right)  \right\vert ^{2}\ dtd\tau\\
&  =\int_{1/2}^{1}\sum_{m\neq0}\left\vert \sum_{j=1}^{N}e^{2\pi im\cdot
u\left(  j\right)  }\right\vert ^{2}\left\vert \widehat{\chi}_{\tau C_{\gamma
}}\left(  m\right)  \right\vert ^{2}\ d\tau\\
&  \geqslant\sum_{m\in Q_{N}}\left\vert \sum_{j=1}^{N}e^{2\pi im\cdot u\left(
j\right)  }\right\vert ^{2}\int_{1/2}^{1}\tau^{2}\left\vert \widehat{\chi
}_{C_{\gamma}}\left(  \tau m\right)  \right\vert ^{2}\ d\tau\\
&  \geqslant c\left\vert \sqrt{N}\right\vert ^{-3}\sum_{m\in Q_{N}}\left\vert
\sum_{j=1}^{N}e^{2\pi im\cdot u\left(  j\right)  }\right\vert ^{2}\geqslant
c\ N^{1/2}\ .
\end{align*}

\end{proof}

\begin{remark}
We have already pointed out that the discrepancy
results for $C_{\gamma}$ are \textquotedblleft intermediate\textquotedblright%
\ between the case of a convex body with smooth boundary having everywhere
positive curvature, and the case of a polygon (just send $\gamma\rightarrow2$
or $\gamma\rightarrow+\infty$). This is not the case for the main result in
this section. Indeed we know that for a polygon we have a logarithmic lower
bound (see \cite{Mon}) which has a counterpart in Davenport's paper
\cite{Dav}. The \textquotedblleft explanation\textquotedblright\ is that a
polygon does not have points on the boundary with positive curvature, while
for every $\gamma<+\infty$ the convex body $C_{\gamma}$ has such points.
\end{remark}

\section{Remarks on higher dimensional cases}

Kendall's upper bound works in higher dimensions as well, Indeed let
$B=\left\{  t\in\mathbb{R}^{d}:\left\vert t\right\vert \leqslant1\right\}  $
and let $t\in\mathbb{T}^{d}=\mathbb{R}^{d}/\mathbb{Z}^{d}$. Let
\[
D_{R}\left(  \sigma,t\right)  =-R^{d}\left\vert B\right\vert +\mathrm{card}%
\left(  \left(  \sigma\left(  RB\right)  +t\right)  \cap\mathbb{Z}^{d}\right)
\ .
\]
Then, see e.g. \cite{BGT},%
\[
\left\{  \int_{\mathbb{T}^{d}}\left\vert D_{R}\left(  \sigma,t\right)
\right\vert ^{2}\ dt\right\}  ^{1/2}\leqslant c\ R^{\left(  d-1\right)  /2}\ .
\]
Interestingly (see \cite{PS}) its converse%
\[
\left\{  \int_{\mathbb{T}^{d}}\left\vert D_{R}\left(  \sigma,t\right)
\right\vert ^{2}\ dt\right\}  ^{1/2}\geqslant c_{1}\ R^{\left(  d-1\right)
/2}%
\]
holds if and only if $d\not \equiv 1\left(  \mathrm{mod\ }4\right)  $.

\bigskip

Theorem \ref{chord} does not extend to the case $d\geqslant3$. Indeed,
consider the cube $Q$ in the following figure and the Fourier transform
$\widehat{\chi}_{Q}\left(  \xi\right)  $ in the direction of $\xi$. Then
$\left\vert \widehat{\chi}_{Q}\left(  \xi\right)  \right\vert $ cannot be
controlled by the area of the triangle (i.e. the section) perpendicular to $\xi$ (at distance
$1/\left\vert \xi\right\vert $).
\[
\includegraphics[width=5cm]{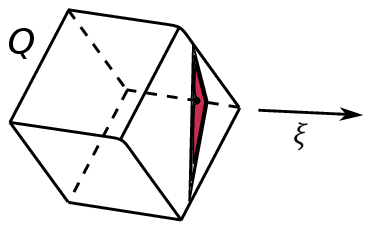}
\]
Indeed the area of the triangle decays of order $2$, so that the
\textquotedblleft parallel section function\textquotedblright\ $\ \mathbb{R}%
\ni x\mapsto h\left(  x\right)  $, which measures the areas of the sections of
$C$ perpendicular to $\xi$, has a shape similar to the following one:%
\[
\includegraphics[width=7.5cm]{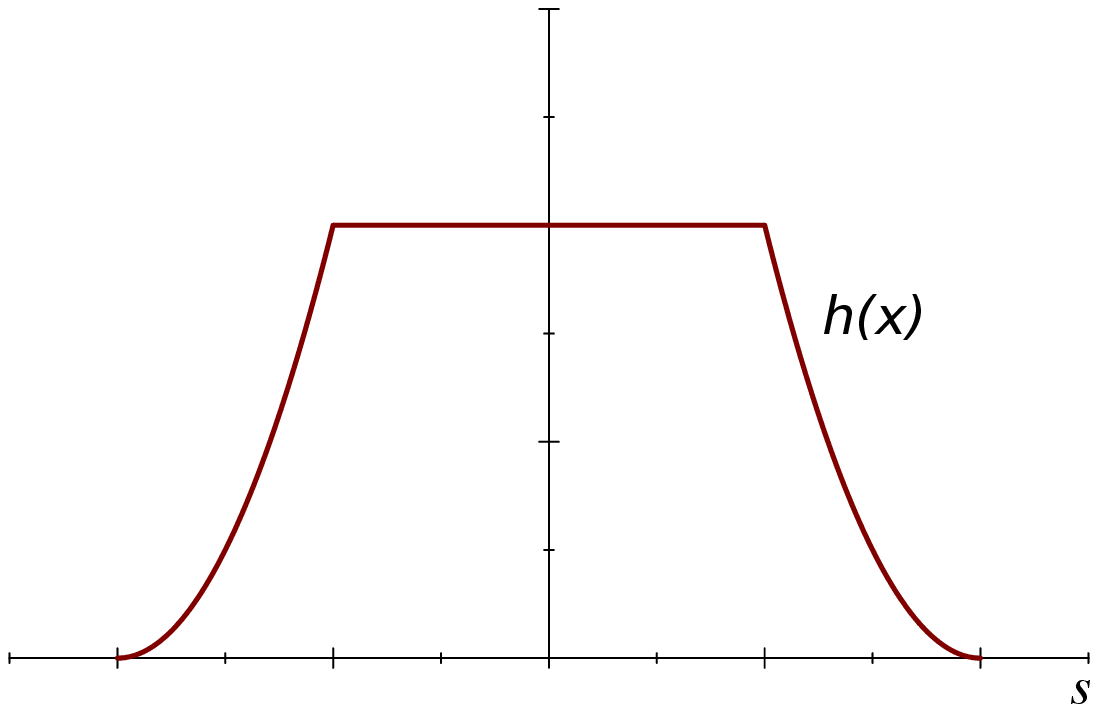}
\]
The above figure shows that the parallel section function $h\left(  x\right)
$ is more regular at the boundary of its support than inside it. Since the
Fourier transform is mostly affected by the \textquotedblleft
irregular\textquotedblright\ points, the decay of $\widehat{\chi}_{Q}\left(
\xi\right)  $ cannot be controlled by a geometric estimate around the boundary
of $Q$. Anyhow this may not be an obstacle. Indeed in the case of a ball $B$
or in the case of a convex body $C$ with smooth boundary having positive
curvature we can still use the asymptotics of Bessel functions (or more
refined estimates introduced by E. Hlawka and C. Herz) to estimate$\ \widehat
{\chi}_{C}(\xi)$. In the case of a polyhedron we may obtain fairly precise
estimates working by induction on its faces. See also \cite{BNW},\cite{BMVW} for general results concerning convexity and geometric estimates of Fourier transforms.

\bigskip

The dyadic argument in the second proof of Theorem \ref{decadCgamma} holds
true in several variables as well (see \cite{BGreenT}).

\bigskip

Theorems \ref{minore dpiu1} and \ref{maggiore dpiu1} can be extended to several
variables with the following more general assumption on $\partial C_{\gamma}$.

\begin{definition}
Let $U$ be a bounded open neighborhood of the origin in $\mathbb{R}^{d-1}$,
let $\Phi\in C^{\infty}\left(  U\setminus\left\{  0\right\}  \right)  $ and
let $\gamma>1$. For every $x\in U\setminus\left\{  0\right\}  $ let $\mu
_{1}\left(  x\right)  ,\ldots,\mu_{d-1}\left(  x\right)  $ be the eigenvalues
of the Hessian matrix of $\Phi$. We say that $\Phi\in S_{\gamma}\left(
U\right)  $ if for $j=1,\ldots,d-1,$%
\[
0<\inf_{x\in U\setminus\left\{  0\right\}  }\left\vert x\right\vert
^{2-\gamma}\mu_{j}\left(  x\right)
\]
and, for every multi-index $\alpha,$%
\[
\sup_{x\in U\setminus\left\{  0\right\}  }\left\vert x\right\vert ^{\left\vert
\alpha\right\vert -\gamma}\left\vert \frac{\partial^{\left\vert \alpha
\right\vert }\Phi}{\partial x^{\alpha}}\left(  x\right)  \right\vert <+\infty.
\]
Let $B$ be a convex body in $\mathbb{R}^{d}$, let $t\in\partial B$
and let $\gamma>2$. We say that $t$ is an isolated flat point of
order $\gamma$ if, in a neighbourhood of $t$ and in a suitable
Cartesian coordinate system with the origin in $t$, $\partial B$ is
the graph of a function $\Phi\in S_{\gamma}\left(  U\right)  $.
\end{definition}

\bigskip

Also Theorem \ref{Gari} can be extended to several variables, see \cite{Gar}.

\end{document}